\numberwithin{equation}{section}
\newcommand{\be}{\begin{equation}}
\newcommand{\ee}{\end{equation}}
\newcommand{\ben}{\begin{eqnarray*}}
\newcommand{\enn}{\end{eqnarray*}}
\newtheorem{proposition}{Proposition}[section]
\newtheorem{theorem}{\textbf Theorem}[section]
\newtheorem{lemma}{\textbf Lemma}[section]
 \numberwithin{equation}{section}
\newtheorem{remark}{Remark}[section]
\begin{document}

\title{{\textbf{Phase transitions and bump solutions of the Keller-Segel model with volume exclusion} }}
\author{Jose A. Carrillo\thanks{Department of Mathematics, Imperial College London, London SW7 2AZ
        ({carrillo@imperial.sc.uk}).}
\and Xinfu Chen \thanks{Department of Mathematics, University of Pittsburgh, 301 Thackeray Hall, Pittsburgh, PA 15260
        ({xinfu@pitt.edu}).}
\and Qi Wang \thanks{Department of Mathematics, Southwestern University of Finance and Economics, 555 Liutai Ave, Wenjiang, Chengdu, Sichuan 611130, China
        ({qwang@swufe.edu.cn}).}
\and Zhian Wang \thanks{Department of Applied Mathematics, Hong Kong Polytechnic University, Hung Hom, Kowloon, Hong Kong
        ({mawza@polyu.edu.hk}).}
\and Lu Zhang \thanks{Department of Mathematics, Southern Methodist University, 6425 Boaz Lane, Dallas TX 75205, USA
         ({luzhang@smu.edu}).}
        }

\maketitle

\begin{abstract}
We show that the Keller-Segel model in one dimension with Neumann boundary conditions and quadratic cellular diffusion has an intricate phase transition diagram depending on the chemosensitivity strength. Explicit computations allow us to find a myriad of symmetric and asymmetric stationary states whose stability properties are mostly studied via free energy decreasing numerical schemes. The metastability behavior and staircased free energy decay are also illustrated via these numerical simulations.
\end{abstract}

\section{Introduction and main results}

Aggregation-diffusion equations are ubiquitous in the modelling of phenomena in mathematical biology; from collective behavior of animal groups \cite{MogilnerEdelstein,TopazBertozzi2} to cell differential adhesion \cite{APS06,MurTog,CHS17} passing through cancer invasion models \cite{GC,DTGC}, being cell movement by chemotaxis one of their most classical applications in mathematical biology \cite{Pat53,KS70,HP09}. We refer to \cite{CCY} for a recent survey of current research in aggregation-diffusion equations.

Among this large class of equations, there is a particular case that has recently attracted lots of attention corresponding to very localized repulsion and attraction due to chemotaxis. More precisely, assume that we have cells whose nuclei are located at positions $\{ x_i \}$, $i=1,\dots,N$. Let us suppose that cells will interact with other cells either by chemoattractive interaction at a fairly long-range distance through the production of a chemoattractant substance $v(x,t)$ or by strong repulsion, if the interparticle cell distance becomes very small due to volume size exclusion constraints around the nuclei. Let us also assume that the localized repulsive forces exerted by cell type $j$ onto cell type $i$ are radial in the direction of the centers of the nuclei and therefore they follow from a radial potential denoted by $W^N$. The basic agent-based model for this ensemble of cells of mass $m$ moving up the gradient of the chemoattractant $v$ with strongly localized repulsion reads as
\begin{align*}
	\dot x_i &= \chi \sum_{j\neq i}\nabla v(x_i) - \frac{m}N\sum_{j\neq i} \nabla W^N (x_i-x_j),
\end{align*}
with $\chi$ the chemosensitivity dimensionless parameter after standard non--dimensionalization. Here, we made the mean-field assumption in order to keep a finite mass $m$ in the limit of large number of agents $N\to \infty$, that is
$$
u(x,t) \simeq \frac{m}N \sum_{i=1}^N \delta_{x_i(t)}
$$
as $N\to \infty$. We now assume that the scaling of this repulsive potential reflects the volume size restriction modeled by localized repulsion \cite{Ol,BV}.  The potential is scaled in $N$ such that $W^N \simeq \delta_0 $ as $N\to \infty$, then taking the limit $N\to\infty$ leads to the following PDE describing the evolution of cell density $u(x,t)$
\begin{equation}\label{01}
u_t  =  \nabla\cdot(u\nabla u-\chi u\nabla v) \,.
\end{equation}
The rigorous derivation for one single cell type from agent based models was done in \cite{Ol}, see also \cite{BCM,BV} and the references therein. Let us point out that there other ways of including volume effects such as the volume filling assumption \cite{HPVolume} differing from the volume exclusion considered here \cite{CCVolume}.  This basic model shows very rich dynamical properties and complex set of stationary states and metastability both for one species and multispecies cases \cite{BFH,CCH15,CHS17,BDFS,CCY} dealing with other attractive kernels instead of the classical chemotaxis kernels.  As usual in chemotaxis modeling, the previous equation is coupled with a reaction-diffusion equation of the chemoattractant $v(x,t)$ typically  created and degraded linearly as
\begin{equation}\label{02}
v_t=\Delta v-v+u \,.
\end{equation}
System \eqref{01}-\eqref{02} has been studied thoroughly in the case of linear diffusion for the cell density in two main settings: the whole space and the bounded domain case with no-flux boundary conditions for both cell density and chemoattractant, see for instance \cite{BDP06,BCM08,BCC12,BL,BCKKLL} for the full space case, \cite{Horstmann03,CLM,WX2013} and the references therein for Neumann boundary condition and the variants of (\ref{01})--(\ref{02}) in \cite{GW1999,LNT,NT1,NT2}.

However, finding stationary states and the asymptotic behavior of this system with nonlinear quadratic diffusion has been elusive. The difficulties being how to show confinement of the mass and how to characterize all the steady states of the system after the early work \cite{CCVolume} showing global uniform bounds on the cell density. In the whole space case and in one and two dimensions this has been clarified only recently by taking advantage of the gradient flow properties of this system, which has a particularly important Liapunov functional of which stationary states are critical points. In the whole space case, the existence of compactly supported radially decreasing global minimizers of the energy was proven in \cite{CCV}. Taking advantage of this variational structure, the authors in \cite{CHVY} were able to show that all stationary states in the whole space must be radially decreasing and compactly supported about their center of mass. In short, in two dimensions for the classical Keller-Segel model, all stationary states with the right regularity in \cite{CHVY} are given by single bumps. This property generalizes to any case in which the uniqueness of radial stationary solutions is proven. For instance, this is shown in \cite{kaib} for regular interaction kernels including the Bessel potential in one dimension.

Stationary states for this system in bounded domains can be fairly more complicated. Let us start by mentioning that even for the simpler aggregation-diffusion equation of the form
$$
u_t  = \nu\Delta u + \nabla\cdot(u\nabla(W*u)) \,,
$$
with $W$ being an interaction potential subject to periodic boundary conditions, this is a case that can demonstrate a phase transition depending on the strength of the noise $\nu$, see \cite{CGPS} and the references therein. These phenomena were also analyzed in \cite{CKY} in the case of quadratic diffusion showing sufficient conditions on the potential $W$ for phase transitions to happen.

In this paper, we consider the following one-dimensional chemotaxis model with quadratic cellular diffusion subject to Neumann boundary conditions
\begin{equation}\label{11}
\left\{\begin{array}{ll}
u_t=(u u_x-\chi u v_x)_x,&x\in(0,L),t>0,\\
v_t=v_{xx}-v+u, &x\in(0,L),t>0,\\
u(x,0),v(x,0)\geq 0, \mbox{ but }\not\equiv 0,&x\in(0,L),\\
u_x (x,t)=v_x (x,t)=0,& x=0,L, t>0.
\end{array}
\right.
\end{equation}
We shall show that system \eqref{11} exhibits a complicated phase transition phenomena due to the fine/intricate structures of its steady states that depend on the chemotactic sensitivity $\chi$.  To this end, we shall study its non-negative steady states, i.e., solutions to the following system
\begin{equation}\label{12}
\left\{\begin{array}{ll}
(uu_x -\chi uv_x)_x=0,&x\in(0,L),\\
v_{xx}-v+u=0,&x\in(0,L),\\
u_x =v_x =0,& x=0,L,\\
u(x)\geq0,v(x)>0,&x\in(0,L),\\[1mm]
(u,v)\in C^0(0,L)\times C^2(0,L), \int_0^L u(x)dx=M,
\end{array}
\right.
\end{equation}
and investigate how the structure and behavior of \eqref{12} change with respect to the chemotactic sensitivity parameter $\chi>0$.  We note that an immediate consequence of the zero-flux boundary conditions is the conservation of cell population
\[M=\int_0^L u(x,t)\,dx=\int_0^L  u(x,0)\,dx, \mbox{ for all } t>0,\]
which further implies that the constant pair $(\bar u,\bar v):=\Big(\frac{M}{L},\frac{M}{L}\Big)$ is a solution to (\ref{11}) and (\ref{12}).

The main aim of this work is to show in details the qualitative information encoded in Figure \ref{bifurcation}, which turns out to be closely related to the following parameter:
$$\chi_k=\Big(\frac{k \pi}{L}\Big)^2+1, \ \ k=1,2,3, \cdots.$$
Here, we see that the bifurcation diagram for steady states is quite intricate and vertical bifurcation occurs from the constant solution $(\bar u,\bar v)$ at each bar $\chi=\chi_k:=(\frac{k\pi}{L})^2+1$, $k\geq 1$.

Since the diffusion of the first equation of \eqref{12} is degenerate at $u=0$, one often expects that the solution component $u$ will have a compact support: the solution has a region with $u$ positive surrounded by vanishing regions.   Throughout the paper, we call the solution a ``bump'' if it is positive in some regions surrounded by two regions of vacuum, and ``half-bump'' is a bump cut in its middle.  We say a ``bump'' solution is similar if it is obtained by reflecting a ``half-bump' solution once or many times.  Then our main results can be summarized as follows:

\begin{theorem}\label{theorem11}
Let the cell mass $M>0$ in \eqref{12} be arbitrary. Then for each $\chi \in [\chi_1, \infty)\setminus \{\chi_k\}_{k=1}^{\infty}$, the solution of \eqref{12} must have a compact support in $(0,L)$ and it has at most $k$ half--bumps if $\chi<\chi_{k+1}$ with $k\geq 1$.  More specifically we have the following results:
\begin{enumerate}
\item  for $\chi<\chi_1$, (\ref{12}) has only the positive constant solution $(\bar u,\bar v)$, which is the global and exponential attractor of (\ref{11});
\item for each $\chi=\chi_k, k=1,2,\cdots$, there exists a one--parameter family of non-negative solutions to (\ref{12}) of the form
\begin{equation*}
(u_k^\epsilon(x),v_k^\epsilon(x))=(\bar u,\bar v)+\epsilon(\chi_k,1)\cos \frac{k\pi x}{L},  \quad \mbox{for all } \epsilon \in \bigg[-\frac{\bar u}{\chi_k},\frac{\bar u}{\chi_k}\bigg]
\end{equation*}
which are strictly positive in $[0,L]$ whenever $\epsilon \neq \pm \frac{\bar u}{\chi_k}$;

\item for each $\chi\in(\chi_1,\infty)$, (\ref{12}) admits a pair of half--bump solutions $(u,v)(x)$ and $(u,v)(L-x)$ explicitly given by:
\begin{equation*}
u(x)=\left\{\begin{array}{ll}
\mathcal A\big(\cos \omega x-\cos \omega l^*\big),&x\in(0,l^*),\\
0,&x\in(l^*,L),
\end{array}
\right.
 \ v(x)=\left\{\begin{array}{ll}
\mathcal A\big(\frac{\cos \omega x}{\chi}-\cos \omega l^*\big),&x\in(0,l^*),\\
\mathcal B \cosh (x-L),&x\in(l^*,L),
\end{array}
\right.
\end{equation*}
where  $l^*\in \big(\frac{\pi}{2\omega}, \frac{\pi}{\omega}\big) \subset (0,L)$ is uniquely determined by the algebraic equation
$
\frac{1}{\omega} \tan \omega l^*=\tanh(l^*-L)
$
and
$$\mathcal A=\frac{\bar u L}{\frac{1}{\omega}\sin \omega l^*-l^*\cos \omega l^*}, \ \ \mathcal B=\frac{\bar u L(\frac{1}{\chi}-1)}{\big(\frac{1}{\omega}\tan \omega l^*-l^*\big)\cosh (l^*-L)};
$$
moreover, the above pair are the unique nonconstant monotone solutions to (\ref{12}).  Furthermore, if $\chi \in (\chi_1, \chi_2)$, then the above pair are the unique nonconstant solutions to (\ref{12}).

\item for $\chi \in [\chi_2, \infty)\setminus \{\chi_k\}_{k=2}^{\infty}$, the following statements hold:
\begin{itemize}
\item system (\ref{12}) has a unique pair of similar--bump solutions $(u_k^\pm,v_k^\pm)(x)$ with $k$ half--bumps explicitly given by \eqref{similar-bump} if and only if $\chi>\chi_k$;
\item if $\chi>\chi_k$, system (\ref{12}) also admits similar--bump solutions $(u_m^\pm,v_m^\pm)(x)$ with $m$ half--bumps for each $m=2,\cdots, k$; moreover, it has infinitely many asymmetric multi--bump solutions $(u_m^\#,v_m^\#)$ that have $m$ half--bumps;
\end{itemize}
\end{enumerate}
\end{theorem}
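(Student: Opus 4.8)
The strategy is to turn the steady system \eqref{12} into a one–dimensional free boundary ODE problem, classify all of its solutions explicitly, and then read off the bifurcation picture. First I would integrate the first equation of \eqref{12} once: the no–flux condition gives $uu_x-\chi uv_x\equiv0$ on $[0,L]$, so on each connected component $I$ of $\{u>0\}$ one has $u=\chi v+c_I$ for a constant $c_I$. Substituting into the second equation yields $v''+\omega^2 v=-c_I$ on $I$, where $\omega:=\sqrt{\chi-1}>0$ (legitimate since $\chi\ge\chi_1>1$), while $v''=v$ on $\{u=0\}$; the regularity requirements ($v\in C^2$, $u\in C^0$) become transmission conditions for $v,v_x$ and the condition $u=0$ at every free boundary. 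Solving these constant–coefficient ODEs, on each bump $I$ one gets $u=\mathcal A_I\big(\cos\omega(x-p_I)-\cos\omega\delta_I\big)$ with $\mathcal A_I>0$, and positivity of $u$ inside $I$ together with $u=0$ at $\partial I$ forces the half–width $\omega\delta_I\le\pi$; on the vacuum components $v$ is a positive, strictly convex combination of $\cosh$ and $\sinh$; and the Neumann conditions fix the phase of a bump touching $x=0$ or $x=L$. Items (2) and (3) then follow by direct verification: the stated $(u_k^\epsilon,v_k^\epsilon)$ and the explicit half–bump satisfy \eqref{12}, the transcendental equation $\tfrac1\omega\tan\omega l^*=\tanh(l^*-L)$ has a unique root $l^*\in(\tfrac{\pi}{2\omega},\tfrac{\pi}{\omega})$ because on that interval its left side increases from $-\infty$ to $0$ with slope $\ge1$ while the right side stays in $(-1,0)$ with slope $<1$, and the mass constraint pins down $\mathcal A,\mathcal B$. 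A monotone nonconstant solution must, unless it is everywhere positive, have $\{u>0\}$ an interval attached to one endpoint (a half–bump), so uniqueness of $l^*$ shows the half–bump pair are the only monotone nonconstant solutions (an everywhere positive solution forces $\chi=\chi_k$ and hence $u\propto\cos(k\pi x/L)+\text{const}$, not monotone for $k\ge2$, while $\chi=\chi_1\notin(\chi_1,\infty)$).

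Next I would prove the counting bound, which also yields the compact support statement and item (1). Put $w:=v_x$. Then $w(0)=w(L)=0$, $w\in C^1$, and $w$ solves $(-\partial_{xx}-q)w=0$ with $q=\omega^2$ on the bumps and $q=-1$ on the vacuum. For a nonconstant solution, $w\not\equiv0$ and its zeros in $(0,L)$ are simple; if it has $m$ of them, Sturm's oscillation theorem says $0$ is the $(m+1)$-th Dirichlet eigenvalue of $-\partial_{xx}-q$, and since $q\le\omega^2$ pointwise, eigenvalue monotonicity gives $0=\lambda_{m+1}(-\partial_{xx}-q)\ge\big((m+1)\pi/L\big)^2-\omega^2$, hence $(m+1)\pi<\omega L$ (strict because $\chi\ne\chi_k$). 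On the other hand a short sign analysis shows $w=v_x$ changes sign at least once at the centre of each interior bump (where $v$ has a strict local maximum) and at least once in each vacuum region trapped between two bumps, so if the solution has $h$ half–bumps then $m\ge h-1$. Combining, $h\le m+1\le\lfloor\omega L/\pi\rfloor=k$ whenever $\chi<\chi_{k+1}$. Also a nonconstant solution cannot have $u>0$ on all of $(0,L)$, since the Neumann condition would force $\sin\omega L=0$, i.e. $\chi=\chi_k$; hence every nonconstant solution with $\chi\ne\chi_k$ is compactly supported. For $\chi<\chi_1$ one then has $h\le\lfloor\omega L/\pi\rfloor=0$, impossible since $\int u=M>0$, so only $(\bar u,\bar v)$ remains (for $\chi\le1$ this is even easier, e.g. because the energy $\mathcal F[u,v]=\int\tfrac{u^2}{2}+\tfrac\chi2(v_x^2+v^2)-\chi uv$ is convex). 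That $(\bar u,\bar v)$ is the global exponential attractor follows from $\mathcal F$ being a strict Lyapunov functional, $\tfrac{d}{dt}\mathcal F=-\int u\,|(u-\chi v)_x|^2-\chi\int v_t^2\le0$, together with the a priori bounds of \cite{CCVolume} (precompact trajectories), LaSalle's principle, and a spectral–gap estimate for the linearization at $(\bar u,\bar v)$, whose instability thresholds are precisely the $\chi_k$.

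For item (4), the symmetric $k$–half–bump state exists exactly when $\chi>\chi_k$: imposing the reflection symmetry collapses the problem on $(0,L)$ to the half–bump problem of item (3) on an interval of length $L/k$, whose unique root of $\tfrac1\omega\tan\omega a=\tanh(a-L/k)$ lies in $(0,L/k)$ precisely when $\pi/\omega<L/k$, i.e. $\chi>\chi_k$; uniqueness among symmetric states is inherited from uniqueness of that root, unwinding the reflections produces \eqref{similar-bump}, and the same computation run backwards shows a $k$–half–bump state forces $\chi>\chi_k$. The intermediate $(u_m^\pm,v_m^\pm)$ for $m=2,\dots,k$ are obtained identically on length $L/m$. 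For the infinitely many asymmetric $m$–half–bump solutions I would fix a bump configuration realizing $m$ half–bumps, write the finite system of matching/Neumann/mass equations in the finitely many constants (amplitudes, the $c_I$, the $\cosh/\sinh$ coefficients) and free–boundary positions, check that the number of unknowns exceeds the number of equations by at least one and that the differential of the system at a known (say symmetric) solution has full rank, and invoke the implicit function theorem to get a continuum through it — necessarily of $m$–half–bump solutions and, off a lower–dimensional symmetric locus, asymmetric; the continuum is nonempty precisely because $\chi>\chi_k$ leaves enough room to fit $m$ admissible bumps into $(0,L)$ with $u\ge0$, $v>0$.

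The step I expect to be the genuine obstacle is the sign–change/eigenvalue count for $v_x$ that upgrades to ``at most $k$ half–bumps'': one must rule out spurious extra zeros of $v_x$ inside bumps and vacuum regions, treat the borderline $\omega\delta_I=\pi$ and the endpoints so that $m\ge h-1$ is clean, and exploit the strict inequality coming from $\chi\ne\chi_k$. Constructing the infinite asymmetric families is mostly bookkeeping but requires the rank condition on the matching system and a check that the family sits inside the positivity region. The remaining pieces — the explicit verifications in items (2)–(3), the transcendental–equation analyses, and the reflection arguments of item (4) — are routine once the reduction above is in place.
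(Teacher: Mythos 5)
Your core reduction --- integrating the flux equation once to get $u=\chi v+c_I$ on each component of $\{u>0\}$, solving the resulting piecewise constant--coefficient ODEs, deriving and analysing the transcendental equation for $l^*$, and building the similar $k$--half--bump family by reflection on subintervals of length $L/k$ --- is exactly the paper's route (Lemma \ref{lemma21} and Section 3), so items (2), (3) and the symmetric part of (4) match. Three sub-arguments genuinely differ. (i) Your Sturm--oscillation count on $w=v_x$ (simple zeros via $C^2$-matching and ODE uniqueness, $0=\lambda_{m+1}(-\partial_{xx}-q)$ with $q\le\omega^2$ and strictness when vacuum is present, plus $m\ge h-1$) is actually \emph{more} complete than what the paper offers: the bound ``at most $k$ half--bumps for $\chi<\chi_{k+1}$'' is essentially asserted there, resting implicitly on the fact that each half--bump occupies a cell of length $>\pi/\omega$ between consecutive critical points of $v$. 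Your version is a real improvement provided you carry out the boundary-vacuum and $\omega\delta_I=\pi$ bookkeeping you flag. (ii) For item (1) the paper does not use LaSalle plus a spectral gap: it works with the entropy $\int u\ln(u/\bar u)+\tfrac{\chi}{2}|v_x|^2$ and the two-sided comparison of $\int u\ln(u/\bar u)$ with $\int|u-\bar u|^2$ (inequality \eqref{QW1}) to close the differential inequality $\tfrac{d}{dt}\mathcal F\le-\delta\mathcal F$ directly, obtaining the global exponential rate without precompactness of trajectories or a linearized analysis of the degenerate quasilinear system; those two ingredients are the least routine and least justified steps in your outline, so this part of your proposal is a sketch rather than a proof. (iii) For the asymmetric families the paper avoids the implicit function theorem entirely: it exhibits the family explicitly, parametrized by the interior critical point $L_0\in(\tfrac{\pi}{\omega},L-\tfrac{\pi}{\omega})$ of $v$, with the two masses determined by continuity of $v$ at $L_0$, so positivity and the ``infinitely many'' claim are read off by inspection, whereas your rank condition would still need to be verified. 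None of these is a fatal gap, but item (1) as you wrote it should be replaced by (or completed to) the quantitative entropy argument.
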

From the above results, we see that the solution structure of \eqref{12} become increasingly rich and complex as $\chi$ expands.  The qualitative information of Theorem \ref{theorem11} is encoded in the bifurcation diagram shown in Figure \ref{bifurcation} where we plot the explicit vertical bifurcation branches and their global continuums out of the constant solution at $\chi_k:=(\frac{k\pi}{L})^2+1$, $k\geq 1$.  

\begin{figure}[ht!]
\centering
\includegraphics[width=\textwidth,height=3in]{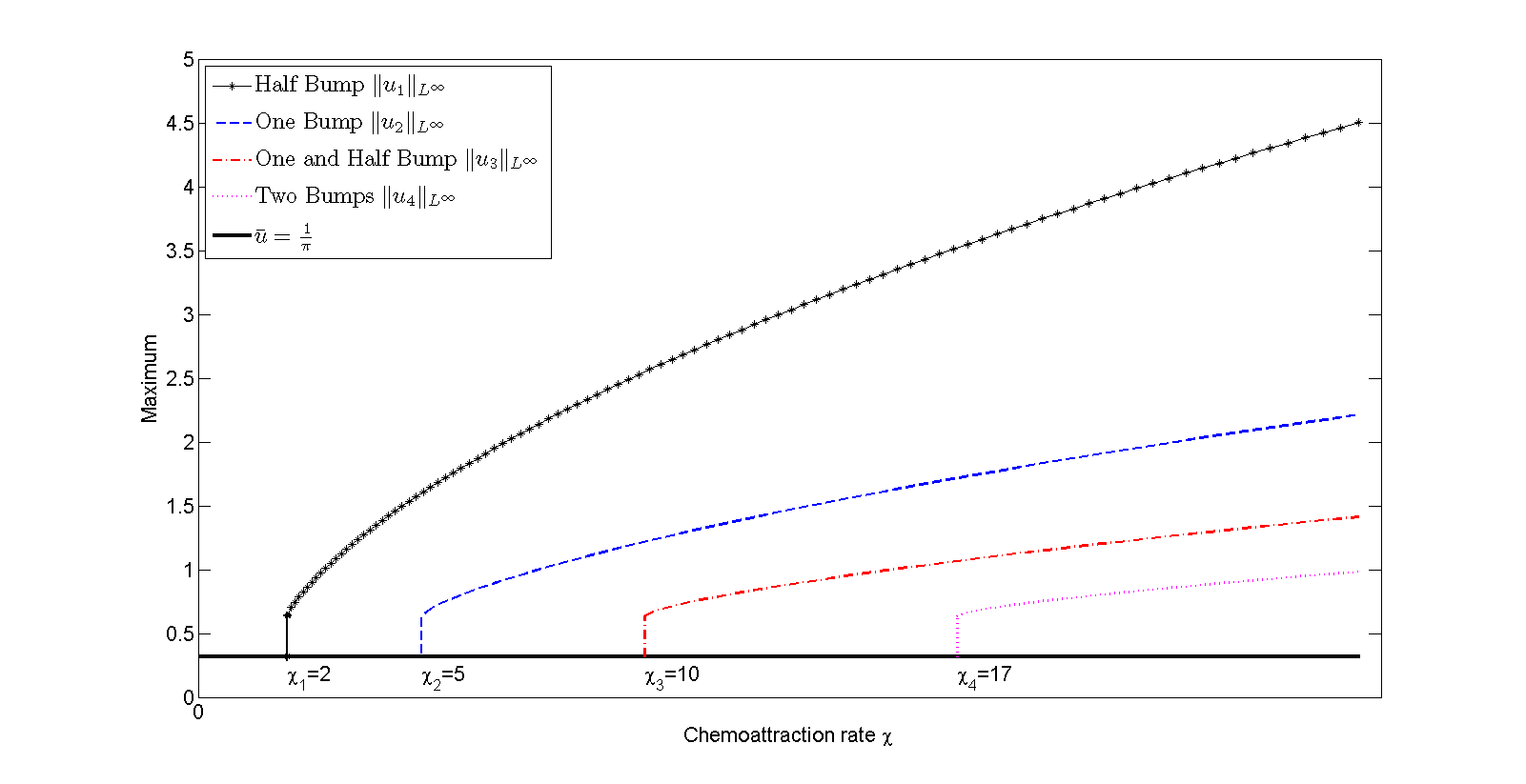}
  \caption{Bifurcation diagram of $L^\infty$-norm of $u$ vs. $\chi$ with $M=1$, $L=\pi$, and $\bar u=\bar v=\frac{1}{\pi}$.  Each vertical bar at $\chi_1=2,\chi_2=5,,\chi_3=10,\chi_4=17,...$ represent a bifurcation branch that consist of the one-parameter family of positive solutions $(u^\epsilon_k,v^\epsilon_k)$ given by result 2 of Theorem \ref{theorem11}.  For each $\chi\in\mathbb R$, the constant pair $(\bar u,\bar v)$ is always a solution of (\ref{12}), and it is
  globally asymptotically stable if $\chi<\chi_1$ and unstable if $\chi>\chi_1$.  For $\chi\in(\chi_1,\infty)$, (\ref{12}) admits half--bump solutions given by result 3 of Theorem \ref{theorem11}, unique up to a reflection about $x=\frac{L}{2}$; moreover, for $\chi\in(\chi_2,\infty)$, there exist solutions with double boundary bumps or a single interior bump.  In general, for $\chi\in(\chi_k,\infty)$, (\ref{12}) admits solutions with $k$ half--bumps, while for $\chi\in(\chi_k,\chi_{k+1})$, its solutions have at most $k$ half bumps.  For each $k$, we have that $\Vert u_k\Vert_{L^\infty}\rightarrow \infty$ as $\chi\rightarrow \infty$. For $\chi\geq\chi_2$ there are infinitely many asymmetric solutions to \eqref{12}.}
\label{bifurcation}
\end{figure}

Now that \eqref{12} admits more and more as $\chi>0$ increases, a question then naturally arises as to which solution or which type of solutions might be more stable than another.  This is very challenging and it is hardly possible to give a positive answer, however one may find some clues by comparing the size of energy at stationary solutions.  It is known that the system \eqref{12} admits the following dissipating energy
\begin{equation*}
\mathcal E(u,v)=\frac{1}{\chi}\int_0^L u^2dx+\int_0^L (v_x^2+v^2-2 uv)dx.
\end{equation*}
Though computing the energy of all possible solutions of \eqref{12} for large $\chi>0$ is impossible, another main result of this paper gives a complete hierarchy of all stationary symmetric bump-solutions as follows:
\begin{theorem}[Decay of energy of symmetric bump-solutions]
Assume that $\chi>\chi_k$, $k\geq1$, and let $(u_k(x),v_k(x))$ be the symmetric multi--bump solutions for $k\geq 2$, then their energies decays as the number of half--bumps increases
$$
\mathcal E(u_1,v_1)<\mathcal E(u_2,v_2)<...<\mathcal E(u_k,v_k)<\mathcal E(\bar u,\bar v).
$$
\end{theorem}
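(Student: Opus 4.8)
The plan is to reduce everything to a closed--form expression for $\mathcal E$ at stationary states and then to a one--variable monotonicity statement about the explicit profiles of Theorem~\ref{theorem11}(3). \emph{Step 1 (an identity for the energy).} The zero--flux condition in \eqref{12} forces $uu_x-\chi uv_x\equiv 0$ (its derivative vanishes and it vanishes at $x=0$), so on each connected component of $\{u>0\}$ one has $u=\chi v+c$ for a constant $c$; for a \emph{symmetric} solution this constant is the same on every component. Testing $v_{xx}-v+u=0$ against $v$ and integrating by parts gives $\int_0^L(v_x^2+v^2)\,dx=\int_0^L uv\,dx$, so at any stationary state $\mathcal E(u,v)=\tfrac1\chi\int_0^L u^2\,dx-\int_0^L uv\,dx$; substituting $v=(u-c)/\chi$ on $\{u>0\}$ and $u\equiv0$ elsewhere yields $\int_0^L uv=\tfrac1\chi\int_0^L u^2-\tfrac c\chi M$, whence
\[
\mathcal E(u,v)=\frac{cM}{\chi}.
\]
(For the constant state $c=\bar u(1-\chi)$, recovering $\mathcal E(\bar u,\bar v)=\bar u^2L(\tfrac1\chi-1)$.)

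\emph{Step 2 (self--similarity and reduction).} The symmetric solution $(u_k,v_k)$ with $k$ half--bumps is $k$ reflected copies of the half--bump of Theorem~\ref{theorem11}(3) posed on an interval of length $\ell:=L/k$ and carrying mass $M/k$ (hence of the same mean $\bar u$); its structural constant $c_k$ is common to all components, so Step~1 gives $\mathcal E(u_k,v_k)=c_kM/\chi$. Thus the theorem is equivalent to
\[
c_1<c_2<\cdots<c_k<\bar u(1-\chi).
\]
Writing $\omega=\sqrt{\chi-1}$, $s=\omega l^*\in(\tfrac\pi2,\pi)$, $q:=|\tan s|\in(0,\omega)$ and $\Phi(s):=s-\omega\,\mathrm{arctanh}(\tfrac{\tan s}{\omega})$, the defining equation $\tfrac1\omega\tan\omega l^*=\tanh(l^*-\ell)$ becomes $\Phi(s_k)=\omega\ell=\omega L/k$, and the formulas of Theorem~\ref{theorem11}(3) with $L\mapsto\ell$ (using $c=\mathcal A(\chi-1)\cos\omega l^*$) give
\[
c_k=\omega^2\bar u\,\frac{\Phi(s_k)\cos s_k}{\sin s_k-s_k\cos s_k}.
\]
Since $\Phi'(s)=-q^2(1+\omega^2)/(\omega^2-q^2)<0$, the map $\Phi$ is strictly decreasing on this range, so increasing $k$ decreases $\Phi(s_k)$ and increases $s_k$.

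\emph{Step 3 (the two inequalities).} Because $\sin s-s\cos s>0$ on $(\tfrac\pi2,\pi)$ (it equals $1$ at $\tfrac\pi2$ with derivative $s\sin s>0$), the bound $c_k<\bar u(1-\chi)=-\omega^2\bar u$ is equivalent to $\Phi(s)\cos s+\sin s-s\cos s<0$; using $\Phi(s)-s=-\omega\,\mathrm{arctanh}(\tfrac{\tan s}{\omega})$ together with $\cos s=-1/\sqrt{1+q^2}$, $\sin s=q/\sqrt{1+q^2}$, this collapses to $q<\omega\,\mathrm{arctanh}(q/\omega)$, i.e.\ $\mathrm{arctanh}(t)>t$ on $(0,1)$, which is elementary. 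For the monotonicity $c_k<c_{k+1}$ it suffices that $s\mapsto c(s)=\omega^2\bar u\,N(s)/D(s)$ be increasing, with $N=\Phi\cos s$ and $D=\sin s-s\cos s$; using $D'=s\sin s$, two cross terms cancel and one gets
\[
N'D-ND'=\Phi'(s)\cos s\,D(s)-\Phi(s)\sin^2 s,
\]
so it remains to prove $\Phi'(s)\cos s\,D(s)>\Phi(s)\sin^2 s$. Substituting the trigonometric values above together with $s=\pi-\arctan q$, $\Phi(s)=\pi-\arctan q+\omega\,\mathrm{arctanh}(q/\omega)$ and the value of $\Phi'$, and clearing positive factors, this reduces exactly to
\[
(1+\omega^2)\big(q+\pi-\arctan q\big)>(\omega^2-q^2)\big(\pi-\arctan q+\omega\,\mathrm{arctanh}(q/\omega)\big),\qquad 0<q<\omega .
\]

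\emph{The main obstacle.} Proving this last two--parameter inequality is the crux; everything above is formal manipulation. It holds with room to spare — as $q\to0^+$ the two sides behave like $(1+\omega^2)\pi$ versus $\omega^2\pi$, and as $q\to\omega^-$ the right side tends to $0$ while the left stays strictly positive — and I would establish it by fixing $\omega$ and tracking the sign of the difference as a function of $q\in(0,\omega)$, or by comparing the power series of $\arctan q$ and $\omega\,\mathrm{arctanh}(q/\omega)$ and bounding the remainders. Granting this inequality, $c_k$ is strictly increasing in $k$, and combining with Step~1 gives the claimed chain $\mathcal E(u_1,v_1)<\mathcal E(u_2,v_2)<\cdots<\mathcal E(u_k,v_k)<\mathcal E(\bar u,\bar v)$.
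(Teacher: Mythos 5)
Your Steps 1 and 2 are correct and follow the same route as the paper: at a steady state the energy collapses to $\mathcal E=\lambda M/\chi$ with $\lambda=u-\chi v$ the common constant on the support (this is \eqref{44}), and for the similar $k$--bump family everything reduces to the monotonicity of one explicit function of the sub-interval length $L/k$, parametrized by $s_k=\omega l_k^*$. Your upper bound $c_k<\bar u(1-\chi)$ is fully proved (it is $\mathrm{arctanh}\,t>t$, the same elementary fact the paper uses for $\mathcal E(u_1,v_1)<\mathcal E(\bar u,\bar v)$), and the algebra reducing $c_k<c_{k+1}$ to
\[
(1+\omega^2)\bigl(q+\pi-\arctan q\bigr)>(\omega^2-q^2)\bigl(\pi-\arctan q+\omega\,\mathrm{arctanh}(q/\omega)\bigr),\qquad 0<q<\omega,
\]
checks out. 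But this inequality \emph{is} the theorem: writing $q=-\tan(\omega l^*)$ one finds that the difference of your two sides equals $\omega(1+\omega^2)\,\psi(l^*)$, i.e.\ it is exactly the key claim $\psi(l^*)>0$ of Lemma \ref{lemma41}. Leaving it at ``holds with room to spare'' plus two candidate strategies is a genuine gap: the endpoint checks at $q\to0^+$ and $q\to\omega^-$ do not control the interior, and the margin is in fact not generous --- for large $\omega$ and small $q$ the two sides differ by $O(1)$ against terms of size $O(\omega^2)$.

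The gap is closable by precisely the first strategy you name, because the derivative collapses. Set $G(q)$ equal to the left side minus the right side above. Then $G(0)=(1+\omega^2)\pi-\omega^2\pi=\pi>0$, and the rational terms in $G'$ cancel identically:
\[
G'(q)=\frac{(1+\omega^2)q^2+(\omega^2-q^2)-\omega^2(1+q^2)}{1+q^2}+2q\Bigl(\pi-\arctan q+\omega\,\mathrm{arctanh}(q/\omega)\Bigr)=2q\,\Phi\bigl(s(q)\bigr)>0,
\]
since every term of $\Phi(s(q))=\pi-\arctan q+\omega\,\mathrm{arctanh}(q/\omega)$ is positive on $(0,\omega)$. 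Hence $G>0$ there, $c_k$ is strictly increasing in $k$, and the chain follows. This is the same cancellation the paper exploits: there $\psi'(l^*)$ factors as a positive multiple of $z\,(l^*-\mathrm{arctanh}\,z)$ with $z=\tanh(l^*-L/k)<0$, so $\psi$ is monotone on $(\frac{\pi}{2\omega},\frac{\pi}{\omega})$ and positive at the endpoint $l^*=\pi/\omega$. With this one computation supplied, your argument is complete and essentially identical in substance to the paper's.
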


Section 3 is devoted to constructing and analyzing deeply the behavior of half-bumps, the compactly supported monotone distributional solutions, of \eqref{12} for $\chi>\chi_1$.  These half-bump solutions bifurcate from the limiting solutions of positive steady states $(u^\epsilon_1,v^\epsilon_1)$ as $\epsilon\rightarrow \pm \frac{\bar u}{\chi_1}$ at $\chi=\chi_1$ as depicted in Figure \ref{bifurcation}.  The same bifurcation occurs to the limiting solutions at $\chi=\chi_k$, $k\geq 2$, by a suitable reflection and gluing procedure. Moreover, asymmetric half-bumps and symmetric single bump solutions are also possible as soon as $\chi\geq \chi_2$. We point out that some of these families of solutions were already found in \cite{BCR}, and we revisit their analysis by complementing their results and understanding them in terms of the bifurcation diagram.  Moreover, the construction by symmetries of general branches is also novel with respect to \cite{BCR}.

Once this bifurcation diagram is analyzed in Section 4, we take advantage of the gradient flow structure to further analyze the stability of solutions from the energy landscape viewpoint. We show that the half-bump solutions are the ones with the least energy even if they become boundary spikes as $\chi\to\infty$. In the last section, we focus on the analysis of the stability of these branches via suitable numerical methods. Due to the gradient flow structure, we propose a structure-preserving numerical method in the spirit of \cite{CCH15} keeping the decreasing energy property of the system. Using this scheme we showcase the stability/instability of the different branches. We present certain conjectures about the basins of attraction of some branches and show the inherent metastability of solutions due to the large number of unstable stationary states and their complicated stable manifolds.


\section{Stability of Uniform Density and its Bifurcations}

In this section, we start by showing that the uniform steady state is exponentially asymptotically stable for all initial data whenever $\chi<\chi_1$ and we will show that a vertical bifurcation happens at every $\chi_k$, for all $k\geq 1$. Let us first point out that there are no nonconstant positive classical solutions to the steady state equation  (\ref{12}) on $[0,L]$ whenever $\chi\geq \chi_1$ unless $\chi=\chi_k$, $k\geq 1$.

\begin{lemma}\label{lemma21}
Let $(u,v)$ be an arbitrary non--constant solution of (\ref{12}). Then $u$ must be of compact support inside $[0,L]$ for each $\chi\in(\chi_1,\infty)$ unless $\chi=\chi_k$ for some $k\in\mathbb N^+$; for $\chi=\chi_k$,  there exists a one--parameter family of positive solutions to (\ref{12})
\begin{equation}\label{21}
(u_k^\epsilon(x),v_k^\epsilon(x))=(\bar u,\bar v)+\epsilon(\chi_k,1)\cos \frac{k\pi x}{L},  \quad \mbox{for all } \epsilon \in \Big(-\frac{\bar u}{\chi_k},\frac{\bar u}{\chi_k}\Big).
\end{equation} \end{lemma}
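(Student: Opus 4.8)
The plan is to analyze the steady-state system \eqref{12} directly by integrating the first equation. Integrating $(uu_x - \chi u v_x)_x = 0$ over $(0,x)$ and using the Neumann condition $u_x(0) = v_x(0) = 0$ gives $u u_x - \chi u v_x = 0$ on $(0,L)$, i.e.\ $u(u_x - \chi v_x) = 0$. Hence on any open interval where $u > 0$ we have $u_x = \chi v_x$, so $u = \chi v + c$ for some constant $c$ on each connected component of the set $\{u > 0\}$. Substituting into the second equation, $v_{xx} - v + \chi v + c = 0$, i.e.\ $v_{xx} + (\chi - 1) v = -c$, a linear constant-coefficient ODE whose solution behavior depends decisively on the sign of $\chi - 1$: for $\chi > 1$ one gets oscillatory solutions $v = c/(\chi-1) + A\cos(\omega x) + B\sin(\omega x)$ with $\omega = \sqrt{\chi - 1}$.

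First I would handle the positive classical case. Suppose $u > 0$ on all of $[0,L]$; then $u = \chi v + c$ globally and $v$ solves $v_{xx} + (\chi-1)v = -c$ on $[0,L]$ with $v_x(0) = v_x(L) = 0$. For $\chi > 1$ the general solution is $v = c/(\chi-1) + A\cos\omega x + B\sin\omega x$; the boundary condition at $x=0$ forces $B = 0$, and the condition at $x = L$ forces $A\,\omega\sin\omega L = 0$. If $A \neq 0$ this requires $\sin\omega L = 0$, i.e.\ $\omega L = k\pi$, which is exactly $\chi = (k\pi/L)^2 + 1 = \chi_k$. So unless $\chi = \chi_k$, we must have $A = 0$, hence $v$ constant, hence $u$ constant, contradicting non-constancy. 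This proves the first assertion: for $\chi \in (\chi_1,\infty) \setminus \{\chi_k\}$, a non-constant solution cannot be strictly positive on $[0,L]$, hence (being continuous) its zero set is nonempty, and one argues the support is a proper compact subset of $[0,L]$. I would need to rule out $u$ vanishing only at isolated points or touching zero at the boundary; this uses that on each component of $\{u>0\}$, $u = \chi v + c$ with $v$ satisfying the oscillatory ODE, and a matching/regularity argument at the free boundary where $u = 0$ (continuity of $u$, and $v \in C^2$ with $v$ solving $v_{xx} - v = 0$ on $\{u = 0\}$) — patching these shows the positive region is a finite union of intervals genuinely surrounded by vacuum.

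For the second assertion ($\chi = \chi_k$), I would simply verify that \eqref{21} solves \eqref{12}: with $v_k^\epsilon = \bar v + \epsilon\cos(k\pi x/L)$ we get $v_{xx} - v + u = -\epsilon(k\pi/L)^2\cos(k\pi x/L) - \bar v - \epsilon\cos(k\pi x/L) + \bar u + \epsilon\chi_k\cos(k\pi x/L) = 0$ since $\bar u = \bar v$ and $\chi_k = (k\pi/L)^2 + 1$; and $u_k^\epsilon - \chi_k v_k^\epsilon = \bar u - \chi_k \bar v = $ const, so $(u u_x - \chi u v_x)_x = (u \cdot 0)_x = 0$ using $u_x = \chi_k v_x$. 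The Neumann conditions hold because $\sin(k\pi x/L)$ vanishes at $x = 0, L$, the mass constraint $\int_0^L u_k^\epsilon = \bar u L = M$ holds since the cosine integrates to zero, and positivity $u_k^\epsilon \geq \bar u - |\epsilon|\chi_k > 0$ holds precisely for $|\epsilon| < \bar u/\chi_k$ (with equality giving a solution that touches zero). The main obstacle is the free-boundary/regularity argument in the compact-support claim: making precise, from the weak/distributional formulation, that where $u$ hits zero it stays zero on a full interval and that the positive set is a finite union of intervals — in other words, excluding pathological oscillatory vanishing — rather than the (routine) ODE computations themselves.
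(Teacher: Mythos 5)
Your proposal is correct and follows essentially the same route as the paper: on the set $\{u>0\}$ one gets $u=\chi v+\mathrm{const}$, substitutes into the $v$-equation to obtain a linear Neumann problem whose nonconstant solutions exist only when $\chi-1=(k\pi/L)^2$, and then verifies \eqref{21} directly at $\chi=\chi_k$ (the paper merely normalizes the constant to $\bar u-\chi\bar v$ by integrating over $(0,L)$, so the equation becomes homogeneous in $v-\bar v$, rather than carrying the particular solution $c/(\chi-1)$ as you do). The free-boundary point you flag — upgrading ``$u$ must vanish somewhere'' to a genuine compact-support statement — is likewise left implicit in the paper's own proof, so it is not a gap relative to the argument you were asked to reproduce.
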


\begin{proof}
We first show that $u(x)$ to (\ref{12}) is of compact support on $[0,L]$ whenever $\chi>\chi_1$ unless $\chi=\chi_k$, $k\geq1$.  If not, assume that $\chi\in(\chi_1,\infty)\backslash \{\chi_k\}_{k=2}^\infty$ and $u(x)>0$ in $(0,L)$, then $u-\chi v$ equals a constant in $(0,L)$, while integrating it over $(0,L)$ implies that $u-\bar u=\chi(v-\bar v)$.  Then the $v$--equation becomes
\[
\left\{\begin{array}{ll}
(v-\bar v)_{xx}+(\chi-1)(v-\bar v)=0,&x\in(0,L),\\
v_x =0,& x=0,L,\\
\end{array}
\right.
\]
which has no nonconstant solution unless $\chi-1=(\frac{k\pi}{L})^2$ for some $k\in\mathbb N^+$, i.e., $\chi=\chi_k$.  Therefore, for $\chi\in(\chi_1,\infty)\backslash \{\chi_k\}_{k=2}^\infty$, we deduce that $v$ is constant implying $u$ is constant on $(0,L)$. Moreover, when $\chi=\chi_k$, $k\geq 1$, one can solve the equation explicitly to obtain this family of positive solutions \eqref{21} .
\end{proof}

One can also easily find that the constant solution $(\bar u,\bar v)$ of (\ref{11}) is locally stable if $\chi<\chi_1$ and is linearly unstable if $\chi>\chi_1$ by linearizing around the constant solution.
Let us show further that $(\bar u,\bar v)$ is globally asymptotically stable and (\ref{12}) has only the constant solution for $0<\chi<\chi_1$. We shall make use of several inequalities which are summarized in the following lemma.
\begin{lemma}
Let $\lambda_1$ be the principal eigenvalue for the Neumann problem of the $-\Delta$ operator in the domain $\Omega\subset\mathbb R^N$.  Assume that $u\in L^1_+\cap L^\infty (\Omega)$, $u\in H^1(\Omega)$, and $\int_\Omega u dx=\bar u |\Omega|$. Then the following inequality holds
\begin{equation}\label{QW1}
     \frac{1}{2\Vert u\Vert_{L^\infty}}\int_\Omega |u-\bar u|^2\leq \int_\Omega u\ln\Big(\frac{u}{\bar u}\Big)\leq \frac{2}{\bar u} \int_\Omega |u-\bar u|^2\leq \frac{2}{\bar u}\frac{1}{\lambda_1}  \int_\Omega |\nabla u|^2.
\end{equation}
\end{lemma}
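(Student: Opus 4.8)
The strategy is to localise: I would reduce the whole four‑fold chain of \eqref{QW1} to a pointwise estimate for the scalar ``entropy density''
\[
\phi(s):=s\ln\frac{s}{\bar u}-s+\bar u,\qquad s\ge 0,
\]
(with the convention $0\ln 0=0$), and, for the very last inequality, to the Poincaré–Wirtinger inequality. The first observation is that mass conservation $\int_\Omega u\,dx=\bar u|\Omega|=\int_\Omega\bar u\,dx$ annihilates the affine part of $\phi$, so that
\[
\int_\Omega u\ln\frac{u}{\bar u}\,dx=\int_\Omega\phi(u)\,dx .
\]
Since $s\mapsto s\ln(s/\bar u)$ is continuous and bounded on $[0,\|u\|_{L^\infty}]$ and $\Omega$ is bounded (which is implicit, since otherwise neither $\bar u$ nor $\lambda_1>0$ make sense), all these integrals are finite. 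Consequently the first two inequalities of \eqref{QW1} follow immediately by integrating over $\Omega$, with $s=u(x)$, the pointwise sandwich
\[
\frac{1}{2\|u\|_{L^\infty}}(s-\bar u)^2\le\phi(s)\le\frac{1}{\bar u}(s-\bar u)^2,\qquad 0\le s\le\|u\|_{L^\infty},
\]
whose right‑hand bound is in fact slightly stronger than the constant $2/\bar u$ stated.

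To obtain the sandwich I would argue as follows. For the upper bound, apply the elementary inequality $\ln t\le t-1$ with $t=s/\bar u$ and multiply by $s\ge0$: this gives $s\ln(s/\bar u)\le s^2/\bar u-s$, hence $\phi(s)\le s^2/\bar u-2s+\bar u=(s-\bar u)^2/\bar u$ (the value $s=0$ being trivial since $\phi(0)=\bar u$). For the lower bound I would use the second‑order Taylor identity
\[
\phi(s)=\int_{\bar u}^{s}\frac{s-t}{t}\,dt ,
\]
checked by differentiating in $s$ and noting $\phi(\bar u)=0=\phi'(\bar u)$. Because $0\le u(x)\le\|u\|_{L^\infty}$ a.e.\ and $\bar u\le\|u\|_{L^\infty}$, the integration variable $t$ here stays in $[0,\|u\|_{L^\infty}]$, where $1/t\ge 1/\|u\|_{L^\infty}$; pulling this constant out of $\int_{\bar u}^{s}(s-t)\,dt$ (valid whether $s\ge\bar u$ or $s<\bar u$) yields $\phi(s)\ge\frac{1}{2\|u\|_{L^\infty}}(s-\bar u)^2$. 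Integrating the sandwich over $\Omega$ establishes the first two inequalities.

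The last inequality is simply the Poincaré–Wirtinger inequality applied to the zero‑mean function $w:=u-\bar u\in H^1(\Omega)$, namely $\int_\Omega|u-\bar u|^2\,dx\le\lambda_1^{-1}\int_\Omega|\nabla u|^2\,dx$, where $\lambda_1$ is the first nonzero Neumann eigenvalue of $-\Delta$ on $\Omega$ (i.e.\ the spectral gap — the bottom of the Neumann spectrum being $0$, attained by constants); multiplying through by $2/\bar u$ closes the chain. I do not foresee a genuine obstacle: the only points requiring a little care are the finiteness of the entropy integral (covered by $u\in L^\infty$ with $\Omega$ bounded), the convention at $s=0$ where the logarithm is singular, and — the one place the constant $\tfrac{1}{2\|u\|_{L^\infty}}$ actually originates — the use of $\bar u\le\|u\|_{L^\infty}$ together with $0\le u\le\|u\|_{L^\infty}$ to confine $t$ to $[0,\|u\|_{L^\infty}]$ in the Taylor representation.
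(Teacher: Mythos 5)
Your proof is correct, and for two of the three inequalities it coincides with the paper's: the lower bound is the same second-order Taylor argument (you use the integral form of the remainder, the paper uses the Lagrange form $f(u)=\tfrac{1}{2\xi}(u-\bar u)^2$ with $\xi$ between $u$ and $\bar u$, both exploiting $\xi\le\|u\|_{L^\infty}$ via $\bar u\le\|u\|_{L^\infty}$), and the final step is the same Rayleigh-quotient/Poincar\'e--Wirtinger inequality for the zero-mean function $u-\bar u$. Where you genuinely diverge is the middle inequality. The paper keeps the Taylor representation and splits into the cases $z\ge\alpha\bar u$ (where $\xi\ge\alpha\bar u$ controls the remainder) and $0\le z\le\alpha\bar u$ (where monotonicity gives $f(z)\le f(0)=\bar u\le(z-\bar u)^2/((1-\alpha)^2\bar u)$), then optimizes $\alpha=2-\sqrt3$ to balance the two constants at $\tfrac{2+\sqrt3}{2}\approx1.87<2$. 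You instead apply $\ln t\le t-1$ with $t=s/\bar u$, multiply by $s$, and complete the square to get the clean pointwise bound $\phi(s)\le(s-\bar u)^2/\bar u$ for all $s\ge0$ (with equality at $s=0$ and $s=\bar u$). This is both simpler — no case analysis, no optimization over $\alpha$ — and sharper, yielding constant $1/\bar u$ in place of the stated $2/\bar u$; it of course still implies the inequality as written. The only hypotheses you use are the same ones the paper uses implicitly ($\Omega$ bounded, $\bar u\le\|u\|_{L^\infty}$, the convention $0\ln0=0$), so there is no gap.
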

\begin{proof}
The last inequality readily follows from Rayleigh quotient.  To show the first two, we introduce the following function $f(z):=z\ln \frac{z}{\bar u}+\bar u-z,z\geq0.$
One finds that $f(0)=\bar u$, $f(\bar u)=f'(\bar u)=0$ and $f''(z)=\frac{1}{z}$. Taylor expansion implies that
\[
f(u)=f(\bar u)+f'(\bar u)(u-\bar u)+\frac{f''(\xi)}{2}(u-\bar u)^2=\frac{1}{2\xi}(u-\bar u)^2\geq \frac{1}{2\Vert u\Vert_{L^\infty}}(u-\bar u)^2,
\]
since $z\in [u,\bar u]$, which gives rise to the first inequality. To show the second inequality take $\alpha\in (0,1)$ be a constant to be determined, then we have that
$f(z)=\frac{1}{2\xi}(z-\bar u)^2\leq \frac{(z-\bar u)^2}{2\alpha \bar u}, z\geq \alpha \bar u;$
on the other hand, since $f'(z)<0$ for $z<\bar u$, we have that
$f(z)\leq f(0)=\bar u\leq \frac{(z-\bar u)^2}{(1-\alpha)^2 \bar u}, 0\leq z \leq \alpha \bar u.$
Choosing $\alpha =2-\sqrt 3$ gives us the second inequality.
\end{proof}

We note that (\ref{11}) is globally well--posed in general bounded domains $\Omega\subset\mathbb R^N$ ($N\geq1$), and their weak solutions \cite{CLM} are uniformly bounded by and unique in the class of bounded weak solutions \cite{CLM}.  We can conclude the exponential convergence.

\begin{theorem}
Let $\Omega\subset\mathbb R^N$ ($N\geq1$).  If $0<\chi<\chi_1$, then for any non--negative initial data $(u_0,v_0)\in L^\infty(\Omega)^\times L^\infty(\Omega)$, the constant solution $(\bar u,\bar v)$ is the exponential global attractor of (\ref{11}), i.e., for any $1\leq p<\infty$, there exist two constants $C,\delta\in\mathbb R^+$ such that
\[\Vert u(\cdot,t)-\bar u \Vert_{L^p(\Omega)}+\Vert v(\cdot,t)-\bar v\Vert_{H^1(\Omega)}\leq Ce^{-\delta t}, t\geq0.\]
\end{theorem}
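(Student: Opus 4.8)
The plan is to use the dissipating energy $\mathcal E(u,v)$ as a Lyapunov functional, combined with the functional inequalities of the previous lemma, to extract exponential decay. First I would recall that along the flow \eqref{11} the energy dissipation identity takes the form
\begin{equation*}
\frac{d}{dt}\mathcal E(u(\cdot,t),v(\cdot,t)) = -\int_\Omega u\left|\frac{u_x}{\chi}\cdot\frac{1}{?}\right|\dots,
\end{equation*}
so more carefully: since \eqref{11} is the gradient flow of $\mathcal E$ with respect to the appropriate metric, one has $\frac{d}{dt}\mathcal E = -\int_\Omega u\,|\nabla(\ln u - \chi v/\dots)|^2 - \int_\Omega |v_t|^2$ after rescaling; I would write the precise dissipation as $\frac{d}{dt}\mathcal E = -D_1(t) - D_2(t)$ with $D_1 = \frac{2}{\chi}\int_\Omega u\,|\nabla(\dots)|^2 \geq 0$ coming from the $u$-equation and $D_2 = \int_\Omega v_t^2 \geq 0$ from the $v$-equation. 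The point of the argument is: (i) $\mathcal E$ is bounded below when $\chi<\chi_1$, and (ii) one can bound $\mathcal E(u,v) - \mathcal E(\bar u,\bar v)$ from above by a constant times the dissipation, which gives a differential inequality of Gronwall type.

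The key algebraic step is to rewrite $\mathcal E(u,v) - \mathcal E(\bar u, \bar v)$ in a manifestly coercive way. Since $\int_\Omega(v-\bar v)=0$, completing the square in the $v$-part gives $\int_\Omega(v_x^2 + v^2 - 2uv)\,dx - (\text{const}) = \int_\Omega |\nabla(v-\bar v)|^2 + \int_\Omega |v-\bar v|^2 - 2\int_\Omega(u-\bar u)(v-\bar v)\,dx$ up to the constant $-\bar u^2|\Omega|$ coming from $-2\bar u\bar v|\Omega|$ type terms; combined with the entropy-type comparison using \eqref{QW1}, namely $\frac{1}{\chi}\int_\Omega u^2\,dx \gtrsim \frac{1}{\chi}\int_\Omega|u-\bar u|^2$, and the spectral gap $\lambda_1$, the condition $\chi<\chi_1$ (which is exactly $\chi < \lambda_1 + 1$ on $\Omega=(0,L)$, and more generally the relevant threshold $\chi(\lambda_1+1)^{-1} < 1$ or the natural analogue) should make the quadratic form $\mathcal E - \mathcal E(\bar u,\bar v)$ bounded below by $c_0(\|u-\bar u\|_{L^2}^2 + \|v-\bar v\|_{H^1}^2)$ for some $c_0>0$. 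Using the well-posedness and uniform $L^\infty$ bounds from \cite{CLM} quoted just above the statement, $\|u\|_{L^\infty}$ is controlled uniformly in $t$, so the constants in \eqref{QW1} are uniform.

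Next I would close the loop: one needs $\mathcal E - \mathcal E(\bar u,\bar v) \leq C(D_1 + D_2)$, i.e. the "energy $\lesssim$ dissipation" inequality (a Łojasiewicz-type or, here, genuinely linear estimate because the functional is essentially quadratic near the constant state). For $D_2 = \int v_t^2 = \int|v_{xx} - v + u|^2$, elliptic regularity gives $\|v - (\text{the solution of }{-}\Delta w + w = u)\|_{H^2}^2 \lesssim D_2$, which together with $\|u-\bar u\|_{L^2}$ controls $\|v-\bar v\|_{H^1}$; for $D_1$, the inequality $\int_\Omega u|\nabla \ln u|^2 \gtrsim \lambda_1 \|u - \bar u\|_{?}$-type bounds (log-Sobolev / the last inequality in \eqref{QW1} applied to $\sqrt{u}$ or directly) let one dominate $\|u-\bar u\|_{L^2}^2$ by $D_1$ plus lower-order terms absorbable by the smallness $\chi<\chi_1$. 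Putting these together yields $\frac{d}{dt}(\mathcal E - \mathcal E(\bar u,\bar v)) \leq -\delta_0 (\mathcal E - \mathcal E(\bar u,\bar v))$, hence $\mathcal E(t) - \mathcal E(\bar u,\bar v) \leq (\mathcal E(0) - \mathcal E(\bar u,\bar v)) e^{-\delta_0 t}$, and then the coercivity lower bound converts this to $\|u-\bar u\|_{L^2}^2 + \|v-\bar v\|_{H^1}^2 \leq C e^{-\delta_0 t}$. Finally, interpolation between this $L^2$ decay and the uniform $L^\infty$ bound $\|u(\cdot,t)\|_{L^\infty}\leq C$ gives $\|u-\bar u\|_{L^p} \leq \|u-\bar u\|_{L^2}^{2/p}\|u-\bar u\|_{L^\infty}^{1-2/p} \leq C e^{-\delta t}$ with $\delta = \delta_0/p$ for all $1\le p<\infty$, completing the proof.

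The main obstacle I anticipate is justifying the dissipation identity and the "energy controls dissipation" inequality rigorously at the level of the weak solutions from \cite{CLM}: the $u$-equation is degenerate at $u=0$, so $\int u|\nabla\ln u|^2$ must be interpreted carefully (in terms of $\nabla u$ where $u>0$, or via $\nabla\sqrt{u}$), and one must ensure the formal computations are licensed by the regularity of bounded weak solutions — this is where invoking the well-posedness/regularity theory of \cite{CLM} and possibly an approximation argument is essential. A secondary technical point is making the threshold bookkeeping precise so that the coercivity constant $c_0$ and the rate $\delta_0$ are strictly positive exactly when $\chi<\chi_1$; the cross term $-2\int(u-\bar u)(v-\bar v)$ is controlled by Young's inequality against $\frac{1}{\chi}\|u-\bar u\|_{L^2}^2 + \|v-\bar v\|_{H^1}^2$, and the algebra should reduce precisely to the sign condition $\chi < \lambda_1 + 1 = \chi_1$.
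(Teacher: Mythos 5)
Your overall strategy (Lyapunov functional, Gronwall, interpolation with the uniform $L^\infty$ bound, parabolic regularity for $v$) matches the paper's in outline, but you have chosen the wrong functional, and that choice creates a gap at the heart of the argument. The paper does not work with the free energy $\mathcal E$; it introduces the modified functional $\mathcal F=\int_\Omega u\ln(u/\bar u)\,dx+\frac{\chi}{2}\int_\Omega|\nabla v|^2\,dx$. The point of this choice is that the first variation of $\int u\ln(u/\bar u)$ is $\ln u+1$, so in the dissipation the degenerate mobility cancels: $\int_\Omega(\ln u+1)\,\nabla\cdot(u\nabla(u-\chi v))\,dx=-\int_\Omega\nabla u\cdot\nabla(u-\chi v)\,dx$, and one obtains the exact identity $\frac{d\mathcal F}{dt}=-\int_\Omega|\nabla(u-\chi v)|^2-\chi\int_\Omega\big(|\Delta v|^2+(1-\chi)|\nabla v|^2\big)$, a nondegenerate quadratic expression in $(\nabla u,\nabla v,\Delta v)$ that becomes negative definite precisely when $\chi<\chi_1=1+\lambda_1$ after the spectral-gap inequality $\int_\Omega|\Delta v|^2\ge\lambda_1\int_\Omega|\nabla v|^2$. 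The functional $\mathcal F$ is then dominated by its own dissipation through the logarithmic Poincar\'e chain \eqref{QW1}, giving $\frac{d\mathcal F}{dt}\le-\delta\mathcal F$ directly, with no need for any reverse ``energy controls dissipation'' estimate.

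By contrast, your plan hinges on the unproven inequality $\mathcal E-\mathcal E(\bar u,\bar v)\le C(D_1+D_2)$ with $D_1=\frac{2}{\chi}\int_\Omega u|\nabla(u-\chi v)|^2$ and $D_2=2\int_\Omega v_t^2$. This is the crux, and your sketch does not deliver it: (i) $D_1$ carries the degenerate weight $u$, so it yields no control of $u-\bar u$ where $u$ is small, and the quantity you invoke to repair this, $\int_\Omega u|\nabla\ln u|^2=4\int_\Omega|\nabla\sqrt u|^2$, is not the dissipation of $\mathcal E$ and does not appear in $\frac{d\mathcal E}{dt}$; (ii) after expanding $u|\nabla(u-\chi v)|^2\ge\frac{u}{2}|\nabla u|^2-\chi^2u|\nabla v|^2$, the error term $\chi^2\int_\Omega u|\nabla v|^2$ must be absorbed, but $\int_\Omega v_t^2$ does not control $\int_\Omega|\nabla v|^2$ (at any state where $v$ solves its elliptic equation one has $D_2=0$ while $\nabla v\ne0$), and replacing $u$ by $\Vert u\Vert_{L^\infty}$ pollutes all constants so that the sharp threshold $\chi<\chi_1$ cannot emerge from the Young-inequality bookkeeping. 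The coercivity lower bound $\mathcal E-\mathcal E(\bar u,\bar v)\ge c_0(\Vert u-\bar u\Vert_{L^2}^2+\Vert v-\bar v\Vert_{H^1}^2)$ that you state is indeed valid exactly for $\chi<\chi_1$, but it is the easy half. Unless you can actually prove the energy--dissipation inequality (a genuine {\L}ojasiewicz-type statement for a degenerate gradient flow), you should switch to the paper's functional $\mathcal F$, for which this difficulty disappears; your endgame (interpolating the $L^2$ decay against the uniform bound to get $L^p$, and parabolic regularity for $v$) then goes through as written.
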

\begin{proof}
Let us introduce the following functional
\[\mathcal F:=\int_\Omega u\ln\Big(\frac{u}{\bar u}\Big)+\frac{\chi}{2}|\nabla v|^2.\]
Using \eqref{11} one can easily obtain
\begin{align}\label{QW2}
\frac{d\mathcal F}{dt} =&\int_\Omega (\ln u+1)u_t+\chi \nabla v\cdot \nabla v_t =-\int_\Omega |\nabla (u-\chi v)|^2-\chi \int_\Omega \Big(|\Delta v|^2+(1-\chi)|\nabla v|^2\Big).
\end{align}
To proceed, we claim from the Rayleigh quotient that $\int_\Omega |\Delta v|^2\geq \lambda_1 \int_\Omega|\nabla v|^2$, where $\lambda_1=(\frac{\pi}{L})^2$ in $(0,L)$ and it can be generalized to the principal Neumann eigenvalue in higher dimensions.  We present a simple proof of the claim for completeness.  Let $\{(\lambda_i,\psi_i)\}_{i=0}^\infty$ be the Neumann eigen--pairs, with $\Vert \psi_i\Vert_{L^2}=1$, then the eigen--expansion of $v=\sum C_i\psi_i$ implies
\[\int_\Omega |\nabla v|^2=-\int_\Omega v \Delta v =\sum \lambda_i C_i^2, \int_\Omega |\Delta v|^2=\sum \lambda_i^2 C_i^2,\]
then we have that
\[\int_\Omega |\Delta v|^2-\lambda_1\int_\Omega |\nabla v|^2=\sum \lambda_i(\lambda_i-\lambda_1)C_i^2\geq0,\]
from which the claim follows.  Then one can further proceed with $\chi_1=1+\lambda_1$ to get from (\ref{QW2}) that
\begin{align}\label{QW3}
\frac{d\mathcal F}{dt} \leq &-\int_\Omega |\nabla (u-\chi v)|^2-\chi (\chi_1-\chi) \int_\Omega |\nabla v|^2\nonumber\\
=&-(1-\epsilon)\int_\Omega |\nabla (u-\chi v)|^2- \epsilon\int_\Omega |\nabla (u-\chi v)|^2-\chi (\chi_1-\chi) \int_\Omega |\nabla v|^2\nonumber\\
\leq&-\frac{\epsilon}{2} \int_\Omega |\nabla u|^2-\chi\Big(\chi_1-(1+\epsilon)\chi\Big)\int_\Omega|\nabla v|^2,
\end{align}
where $\epsilon>0$ is small and the last inequality follows from the pointwise inequality $|\nabla u-\chi\nabla v|^2\geq \frac{|\nabla u|^2}{2}-\chi^2|\nabla v|^2$.
In light of the second inequality in (\ref{QW1}), we have from (\ref{QW3}) that
\[\frac{d\mathcal F}{dt} \leq -\frac{\lambda\epsilon\bar u}{4} \int_\Omega u\ln\Big(\frac{u}{\bar u}\Big)-\chi\Big(\chi_1-(1+\epsilon)\chi\Big)\int_\Omega|\nabla v|^2\leq -\delta\mathcal F,\]
with $\delta:=\min\Big\{\frac{\lambda \epsilon \bar u}{4},2\big(\chi_1-(1+\epsilon)\chi\big)\Big\}$.  This implies that $\mathcal F$ converges to zero exponentially as $t\rightarrow\infty$, which yields the exponential convergence of $\|u-\bar{u}\|_{L^2(\Omega)}$ due to \eqref{QW1}. Since $u$ is uniform-in-time bounded by classical arguments, see \cite{Kowalczyk04,CCVolume,TaoWinkler12},  we have that $\|u-\bar{u}\|_{L^\infty(\Omega)}$ decays exponentially by $L^p$-interpolation. Furthermore using the parabolic regularity to the second equation of \eqref{11}, we get that $\Vert v-\bar v\Vert_{H^1(\Omega)}$ also decays exponentially. This completes the proof.
\end{proof}

\begin{figure}[ht]
        \centering
\includegraphics[width=\textwidth,height=3in]{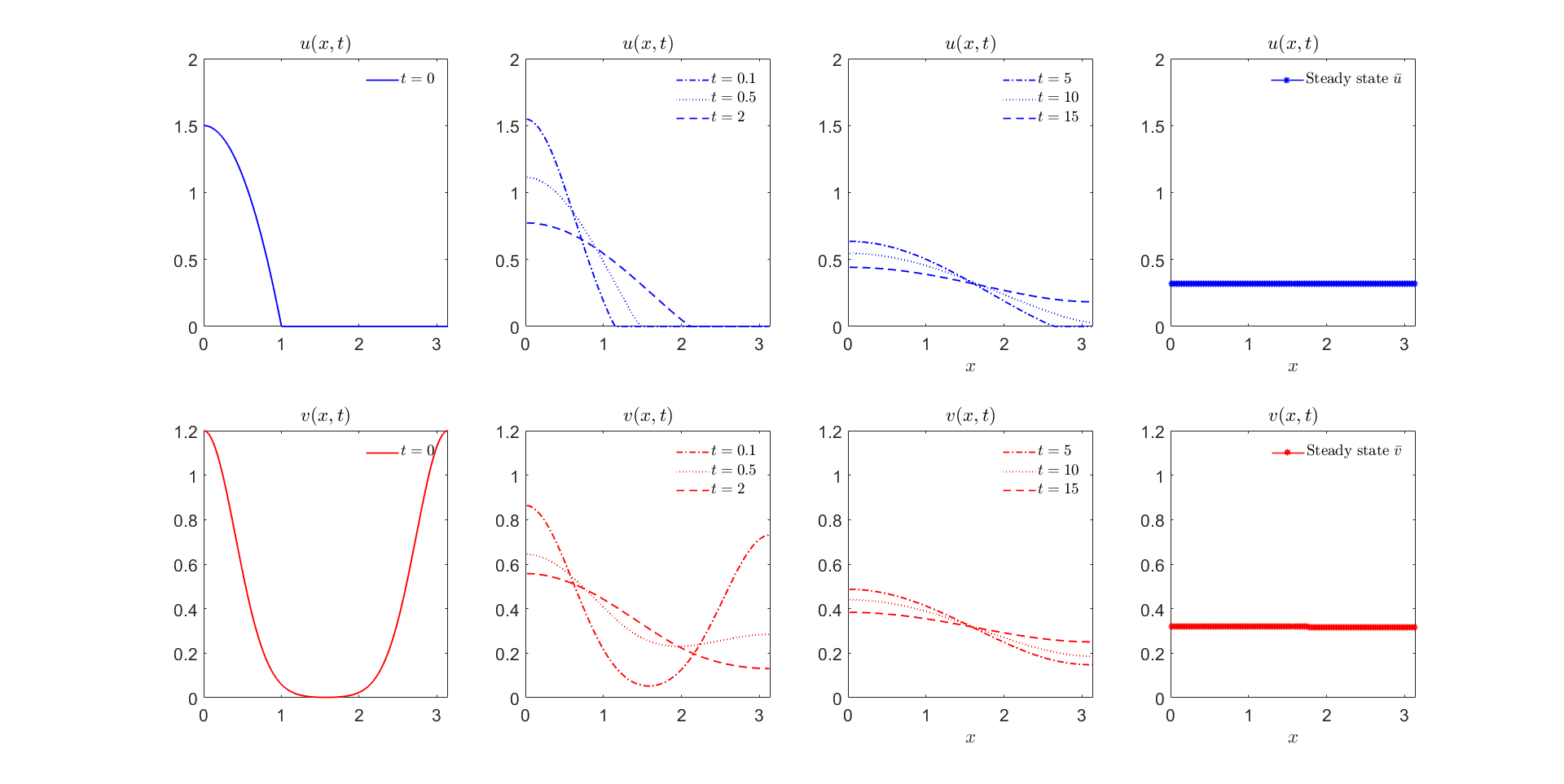}
  \caption{Convergence to the constant solution $(\bar u,\bar v)=(\frac{1}{\pi},\frac{1}{\pi})$ out of initial data $u_0(x)=\max\{0,\frac{3}{2}\big(1-x^2\big)\}$ and $v_0(x)=1.2e^{-3x^2}+1.2e^{-3(x-\pi)^2}$ for $\chi=1.5< \chi_1=2$ over $(0,\pi)$.  This illustrates our theoretical result that $(\bar u,\bar v)$ is the global attractor if $\chi<\chi_1$.}\label{QWfigure1}
\end{figure}

It seems necessary to point out that, $(u_k,v_k)$ given by (\ref{21}) are solutions that bifurcate from $(\bar u,\bar v)$ at $\chi=\chi_k$, $k\in\mathbb N^+$.  Indeed, one can apply the local bifurcation theory of Crandall and Rabinowitz \cite{CR} and its user--friendly development \cite{SW2009,WX2013} as follows: let us denote
$
\mathcal{X}=\{w \in H^2(0,L), w>0, \vert w'(0)=w'(L)=0\},
$
with $'=\frac{d}{dx}$. Then we rewrite (\ref{12}) into the abstract form by taking $\chi$ as the bifurcation parameter
$\mathcal{F}(u,v,\chi)=0,~(u,v,\chi) \in \mathcal{X}  \times \mathcal{X} \times \mathbb{R},$
where
\begin{equation*}
\mathcal{F}(u,v,\chi) =\left(
 \begin{array}{c}
(uu'-\chi u v')'\\
v''-v+u
 \end{array}
 \right).
 \end{equation*}
It is easy to see that $\mathcal{F}(\bar{u},\bar{v},\chi)=0$ for any $\chi \in \mathbb{R}$ and that $\mathcal{F}: \mathcal{X}  \times \mathbb{R}  \times
\mathbb{R}  \rightarrow \mathcal{Y} \times \mathcal{Y}$ is analytic for $\mathcal{Y}=L^2(0,L)$. Moreover, one can find through straightforward calculations
that, at any fixed $(u_0,v_0) \in \mathcal{X} \times \mathcal{X}$, the Fr\'echet derivative $D_{(u,v)}\mathcal{F}(u_0,v_0,\chi)$ is a Fredholm operator with zero index. Furthermore, the null space $\mathcal{N}\big(D_{(u,v)}\mathcal{F}(\bar{u},\bar{v},\chi)\big)$ is not empty if and only if $\chi=\chi_k$, and the null space $\mathcal{N}(D_{(u,v)}\mathcal{F}(\bar{u},\bar{v},\chi_k))$ is of one dimension and has a span
\[\mathcal{N}(D_{(u,v)}\mathcal{F}(\bar{u},\bar{v},\chi_k))=\text{span}\left\{ (\chi_k,1)\cos\frac{k\pi x}{L} \right\}, k\in\mathbb N^+.\]
Then for each $k\in \mathbb N^+$, there exists a (small) constant $\delta>0$ such that the following analytic functions
$s\in(-\delta, \delta):\rightarrow (u_k(s,x),v_k(s,x),\chi_k(s)) \in \mathcal{X}\times \mathcal{X} \times \mathbb{R}^+$
\begin{equation}\label{24}
\left\{
\begin{array}{ll}
u_k(s,x)=\bar{u}+s\chi_k\cos \frac{k\pi x}{L}+s^2\varphi_1(x)+s^3\varphi_2(x)+\mathcal O(s^4),\\
v_k(s,x)=\bar{v}+s\cos \frac{k\pi x}{L}+s^2\psi_1(x)+s^3\psi_2(x)+\mathcal O(s^4),\\
\chi_k(s)=\chi_k+sK_1+s^2K_2+O(s^3),
\end{array}
\right.
\end{equation}
solve the system (\ref{12}), where $\mathcal O(s^4)$ is defined with $\mathcal{X}$-topology, $K_i (i=1,2)$ are constants and $(\varphi_i,\psi_i)\in \mathcal{Z}$ with
\begin{equation*}
\mathcal{Z}=\bigg\{(\varphi,\psi)\in \mathcal{X} \times \mathcal{X}~ \big \vert \int_0^L (\chi_k\varphi+\psi)\cos\frac{k\pi x}{L}dx=0\bigg\}.
\end{equation*}
Moreover, all nontrivial solutions of (\ref{12}) near the bifurcation point ($\bar{u},\bar{v}, \chi_k)$ lie on the curve $\Gamma_k(s)=(u_k(s), v_k(s), \chi_k(s))$, $s\in(-\delta,\delta)$.

\begin{figure}[ht]
        \centering
\includegraphics[width=\textwidth,height=3in]{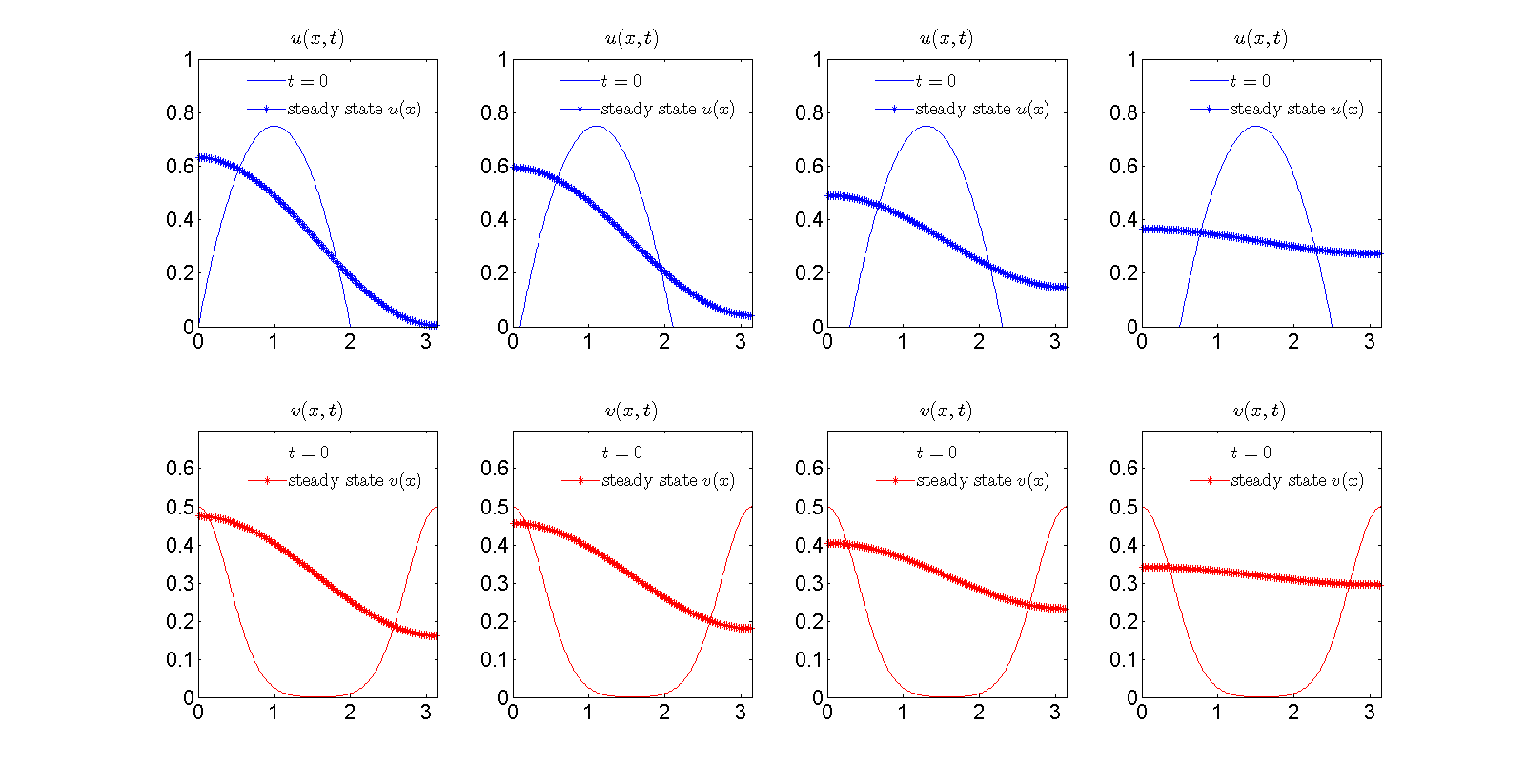}
  \caption{Several positive steady states of (\ref{11}) in $(0,\pi)$ given by (\ref{21}) are achieved when $\chi=\chi_1=2$ subject to different initial data.  According to Lemma \ref{lemma21}, any solution of (\ref{12}) must take the form of (\ref{21}) whenever $\chi=\chi_1=2$, while this one-parameter family of solutions have the same energy.  The initial data are chosen to be $u_0(x)=0.75\max\{0,1-(x-a)^2\}$ and $v_0(x)=0.5e^{-3x^2}+0.5e^{-3(x-L)^2}$, with $a=1,1.1,1.3$ and $1.5$ from the left to right column.  Our numerical results suggest that the dynamical system \eqref{11} with $\chi=\chi_1$ can stabilize into both strictly positive steady states or those touching zero at the boundary point even if the initial datum $u_0(x)$ is compactly supported.} \label{QWfigure2}
\end{figure}

One can compare (\ref{21}) with (\ref{24}) to speculate that $\varphi_i,\psi_i\equiv0 $, $K_i=0$, and so are for all the higher order terms. Therefore, all solutions around $(\bar u,\bar v,\chi_k)$ must be of the form given by (\ref{21}).  To see it in full details, we substitute (\ref{24}) into (\ref{12}) and arrange the equation in the order of $s$.  First of all, collecting $s$--terms easily gives us $K_1=0$; moreover, collecting $s^2$--terms gives
 \[
\left\{\begin{array}{ll}
\varphi_1-\chi_k\psi_1=0, &x\in(0,L)\\
\psi_1''-\psi_1+\varphi_1=0,&x\in(0,L).
\end{array}
\right.
\]
We solve these two equations and find that $\varphi_1=\psi_1\equiv 0$ for all $x\in(0,L)$.  Furthermore, we collect the $s^3$--terms to find that
 \[
\left\{\begin{array}{ll}
(\varphi_2-\chi_k\psi_2)'=K_2(\cos\frac{k\pi x}{L})', &x\in(0,L)\\
\psi_2''-\psi_2+\varphi_2=0,&x\in(0,L).
\end{array}
\right.
\]
We test the first equation above against $\sin \frac{k\pi x}{L}$ and find that $K_2=0$.  Moreover, testing the second equation against $\cos\frac{k\pi x}{L}$ gives us, thanks to the fact that $(\varphi_2,\psi_2)\in\mathcal Z$, that
\[\int_0^L \varphi_2 \cos\frac{k\pi x}{L}dx=\int_0^L \psi_2 \cos\frac{k\pi x}{L}dx=0.\]
Then one can show that $\varphi_2=\psi_2\equiv 0$.  Similarly, we can show that all the rest terms are zeros.  Therefore, our claim is verified. These results are illustrated in Figures \ref{QWfigure1} and \ref{QWfigure2}.  Though, the degeneracy at $u=0$ refrains us from applying the global bifurcation as in \cite{SW2009,WX2013}, we are able to know well about the global structures of each branch as in Figure \ref{figure1} thanks to the explicit solutions.


\section{Bifurcating Bump Solutions}

In this section, we will first construct explicit compactly supported weak solutions to (\ref{12}) which are decreasing monotonically in their support called half-bumps and study their properties with respect to the chemoattractant sensitivity. Moreover, we will see that this thorough study of the half-bump solutions allows for the easy construction of a family of multi-bump solutions by reflection. This family of solutions will be referred to as the similar multi-bump family of solutions since all are constructed from the basic brick of a single half-bump solution. Finally, we will construct asymmetric multi-bump solutions showing how intricate the bifurcation diagram of this problem can be. Let us remind that, as already mentioned in the introduction, some of these results were partly obtained in \cite{BCR}. However, we revisit them here by constructing them in a different manner, expanding the analysis of their properties, and putting them in context with the bifurcation diagram in Figure \ref{bifurcation}.

\subsection{Half-bumps}

We first study monotone decreasing solutions of (\ref{12}) with a compact support for $\chi>\chi_1$, i.e., $u(x)>0$ for $x\in [0,l^*)$ and $u(x)\equiv 0$ for $x\in[l^*,L]$ with $l^*$ to be determined.  Then we shall construct its solutions with multi--bumps by building on the reflecting and extending this block.  Denote in the sequel
$
\omega:=\sqrt{\chi-1}.
$
First of all, since $u>0$ in $[0,l^*)$, we can readily see from the $u$--equation that there exists some constant $\lambda $ to be determined such that
$u-\chi v=\lambda, x\in(0,l^*)$,
hence the $v$--equation becomes
\[
\left\{\begin{array}{ll}
v_{xx}+(\chi-1)v+\lambda =0,&x\in(0,l^*),\\
v_{xx}-v=0,&x\in(l^*,L),\\
v_x(0)=v_x(L) =0.\\
\end{array}
\right.
\]
Solving the above problem directly gives us that
\[
v(x)=\left\{\begin{array}{ll}
C_1\cos \omega x-\frac{\lambda }{\chi-1},&x\in(0,l^*),\\
C_2\cosh (x-L),&x\in(l^*,L),
\end{array}
\right.
\]
for some constants $C_1$ and $C_2$ to be determined. It can be easily checked that $v'(x)<0$ for $x\in (0,L)$.
By the continuity of both $v'(x)$ and $v''(x)$ at $x=l^*$ we have
\[
\left\{\begin{array}{ll}
-C_1\omega\sin \omega l^*=C_2 \sinh(l^*-L),\\
-C_1\omega^2\cos \omega l^*=C_2 \cosh(l^*-L),
\end{array}
\right.
\]
which entail that $l^*$ is a root of the algebraic equation
\begin{equation}\label{27}
\frac{1}{\omega} \tan \omega l^*=\tanh(l^*-L).
\end{equation}

For each $\chi>\chi_1$, one can easily show that there exists a unique $l^*\in \big(\frac{\pi}{2\omega}, \frac{\pi}{\omega}\big) \subset (0,L)$ that solves (\ref{27}).  Indeed, let us denote
\[f(\xi;\omega):=\frac{1}{\omega}\tan \omega \xi-\tanh (\xi-L), \xi\in(0,L).\]
Then $\omega>\frac{\pi}{L}$ for $\chi>\chi_1$, hence $f(\frac{\pi}{\omega};\omega)=\tanh (L-\frac{\pi}{\omega})>0$; moreover,  $f((\frac{\pi}{2\omega})^+;\omega)=-\infty<0$ and $f(\xi;\omega)\in C^\infty((\frac{\pi}{2\omega},\frac{\pi}{\omega}))$ imply that $f(l^*;\omega)=0$ for some $l^*\in(\frac{\pi}{\omega},\frac{\pi}{2\omega})$, while $f_\xi(\xi;\omega)=\tan^2 \omega \xi+\tanh^2(\xi-L)>0$ ensures that $l^*$ is unique.  We would like to point out that for $\chi<\chi_1$, $f(\xi;\omega)$ admits no positive root and hence (\ref{12}) has only constant solution $(\bar u,\bar v)$; moreover, for $\chi\in(\chi_1,\chi_2]$ its has a unique solution $l^*\in(\frac{\pi}{\omega},\frac{\pi}{2\omega})$.  It is also necessary to point out that when $\chi>\chi_2$, $f(\xi;\chi)$ has multiple roots, at least $l^{**}\in (\frac{3\pi}{2\omega},\frac{2\pi}{\omega})$.  However, all the rest roots will be ruled out since we look for non--negative solutions $(u,v)$.  Indeed, if not, $u(l^{**})=0$ and $u(x)=C(\cos \omega x-\cos \omega l^{**})$ for $x\in (0,l^{**})$, then we have that $u(x)<0$ for $x\in(\frac{\pi}{\omega}, \frac{3 \pi}{2\omega})$, which is apparently not physical.  Similarly, one can show that all the other roots of $f(\xi;\omega)=0$, if exist at all, are not applicable.  Therefore $l^*\in (\frac{\pi}{2\omega},\frac{\pi}{\omega})$ is the unique root that we look for as claimed above.

With $l^*$ being obtained through (\ref{27}), we find that
\begin{equation}\label{28}
u(x)=\left\{\begin{array}{ll}
\mathcal A\big(\cos \omega x-\cos \omega l^*\big),&x\in(0,l^*),\\
0,&x\in(l^*,L),
\end{array}
\right.
\end{equation}
where the conservation of total population $\int_0^{l^*} u(x)dx=\mathcal A\big(\frac{1}{\omega}\sin \omega x-x\cos \omega l^* \big)\big|_0^{l^*} =\bar u L$ gives
\begin{equation}\label{29}
\mathcal A=\frac{\bar u L}{\frac{1}{\omega}\sin \omega l^*-l^*\cos \omega l^*}.
\end{equation}
Note that $l^*\in(\frac{\pi}{2\omega},\frac{\pi}{\omega})$, hence $\omega l^* \in (\frac{\pi}{2},\pi)$ and it follows that $\mathcal A>0$.

To find $v(x)$, we have from $u=\chi v+\bar v$ for $x\in(0,l^*)$ that
$\mathcal A\big(\cos \omega x-\cos \omega l^*\big)=\chi C_1 \cos \omega x-\frac{\lambda }{\omega^2},$ which implies $C_1=\frac{\mathcal A}{\chi}$ and $\lambda =\mathcal A(\chi-1)\cos \omega l^*$, i.e.,
\begin{equation*}
\lambda =\frac{\bar u L \omega^2}{\frac{1}{\omega}\tan \omega l^*-l^*}.
\end{equation*}
Hence we find that $v(x)=\mathcal A\big(\frac{\cos \omega x}{\chi}-\cos \omega l^*\big)$ in $(0,l^*)$; on the other hand, by the continuity of $v(x)$ at $x=l^*$, we equate $\mathcal (\frac{1}{\chi}-1) \cos \omega l^*=\mathcal B \cosh (l^*-L)$ to obtain
\begin{equation}\label{211}
\mathcal B=\frac{\bar u L(\frac{1}{\chi}-1)}{\big(\frac{1}{\omega}\tan \omega l^*-l^*\big)\cosh (l^*-L)},
\end{equation}
with $l^*$ obtained in (\ref{27}).  Collecting the above calculations, we find that
\begin{equation}\label{212}
v(x)=\left\{\begin{array}{ll}
\mathcal A\big(\frac{\cos \omega x}{\chi}-\cos \omega l^*\big),&x\in(0,l^*),\\
\mathcal B \cosh (x-L),&x\in(l^*,L),
\end{array}
\right.
\end{equation}
where $\mathcal A$ and $\mathcal B$ are given by (\ref{29}) and (\ref{211}).

According to our discussion above, (\ref{12}) admits such half--bump solutions for each $\chi>\chi_1$.  Moreover, as we shall show in the coming section, it also has multi--bump solutions for $\chi>\chi_2$, however $l^*$ is unique whenever $\chi\in(\chi_1,\chi_2]$ and this indicates that the non--constant half--bump solution to (\ref{12}) is unique.  The following results can be summarized.
\begin{proposition}Let $M>0$ be an arbitrary constant in (\ref{12}), then:

(i). if $\chi<\chi_1$, (\ref{12}) has only the constant solution $(\bar u,\bar v)$;

(ii). if $\chi=\chi_1$, the solution of (\ref{12}) must take the form of $(u_1^\epsilon,v_1^\epsilon)$ in (\ref{21});

(iii). for each $\chi>\chi_1$, (\ref{12}) has a pair of half--bump solutions $(u,v)(x)$ and $(u,v)(L-x)$, explicitly given by (\ref{28}) and (\ref{212});

(iv). if $\chi\in(\chi_1,\chi_2)$, the nonconstant solution of (\ref{12}) must be the half--bumps given in (iii).
\end{proposition}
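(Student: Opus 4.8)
The plan is to dispose of (iii) by a direct verification, since most of the work is already in the construction preceding the statement, and to obtain the three classification claims (i), (ii), (iv) from Lemma~\ref{lemma21} together with the analysis of the algebraic equation (\ref{27}). For (iii) it remains only to record that the explicit pair in (\ref{28}) and (\ref{212}), with $l^*$ the root of (\ref{27}) and $\mathcal A,\mathcal B$ as in (\ref{29}),(\ref{211}), is a genuine distributional solution of (\ref{12}): on $(0,l^*)$ one has $u-\chi v\equiv\lambda$, so the flux $u(u-\chi v)_x$ vanishes there; on $(l^*,L)$ it vanishes since $u\equiv0$; as $u$ reaches $0$ continuously at $l^*$, the flux is continuous and $(uu_x-\chi uv_x)_x=0$ in $\mathcal{D}'(0,L)$. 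The second equation holds on each subinterval by construction, and the matching of $v',v''$ that produced (\ref{27}) makes $v\in C^2(0,L)$; the Neumann conditions are immediate from $u=\mathcal A(\cos\omega x-\cos\omega l^*)$ near $0$ and $v=\mathcal B\cosh(x-L)$ near $L$; non-negativity of $u$ follows from $\mathcal A>0$ and the monotonicity of $\cos$ on $[0,\omega l^*]\subset[0,\pi]$; and $v>0$ follows from the explicit formula (or from $v''-v=-u\le0$, the Neumann conditions, and the maximum principle). The reflected pair $(u,v)(L-x)$ solves (\ref{12}) by the $x\mapsto L-x$ invariance of the system, and the two are distinct since $l^*<L$.

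For (i), the cleanest route is to note that any solution of (\ref{12}) is a time-independent bounded weak solution of (\ref{11}) --- here $u\in C^0\subset L^\infty$ and $v\in C^2\subset L^\infty$, both nonnegative --- so by the global exponential attractor result established above it must coincide with $(\bar u,\bar v)$. A direct alternative: by the proof of Lemma~\ref{lemma21}, which only uses $\chi\notin\{\chi_k\}$, a positive solution is constant, and any free boundary would, on the adjacent positivity interval, force $v$ to be a shifted cosine of frequency $\omega=\sqrt{\chi-1}<\pi/L$ --- or a hyperbolic profile when $\chi\le1$ --- whose matching with the neighboring vacuum region is governed by an equation of the type (\ref{27}), which has no admissible root once $\omega<\pi/L$. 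For (ii), Lemma~\ref{lemma21} already gives the family (\ref{21}) and shows every positive non-constant solution has that form; it remains to observe that a compactly supported solution at $\chi=\chi_1$ is nothing new, because $\omega=\pi/L$ makes $l^*=L$ the only root of (\ref{27}) in $(0,L]$, for which (\ref{28}) degenerates to $\bar u\bigl(1\pm\cos\tfrac{\pi x}{L}\bigr)$, i.e.\ exactly the endpoint members $\epsilon=\pm\bar u/\chi_1$ of (\ref{21}), and two half-bumps cannot coexist since one already fills $(0,L)$. Hence every solution at $\chi=\chi_1$ has the form (\ref{21}), now with $\epsilon$ allowed in the closed interval $[-\bar u/\chi_1,\bar u/\chi_1]$.

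Part (iv) is the crux. For $\chi\in(\chi_1,\chi_2)$, Lemma~\ref{lemma21} forces $u$ to have compact support, and the remaining task is to show the positivity set is a single interval abutting $x=0$ or $x=L$, so that $(u,v)$ is one of the half-bumps of (iii). On each maximal positivity interval $u-\chi v$ is constant, hence $v$ is a shifted cosine of frequency $\omega\in(\pi/L,2\pi/L)$ there, while on each vacuum interval $v''=v>0$, so $v$ is convex; continuity of $v,v'$ at every free boundary, the Neumann conditions at $0,L$, and the sign constraints $u\ge0$, $v>0$ then rigidly constrain the configuration. I would organize the case analysis by the shape of the support: (a) one interval touching a boundary gives (\ref{28})--(\ref{212}) (touching $x=0$) or its reflection $(u,v)(L-x)$ (touching $x=L$), with $l^*$ the unique root of (\ref{27}) in $(\tfrac{\pi}{2\omega},\tfrac{\pi}{\omega})$; (b) a single interior bump is forced, by matching on its two sides, to be symmetric about $L/2$, and its half-width then solves an equation of the form $\tfrac{1}{\omega}\tan\omega\xi=\tanh(\xi-\tfrac{L}{2})$; (c) a symmetric double boundary bump is governed, on each half, by an equation of the same type on the half-domain; (d) wider bumps (phase length exceeding one hump of the cosine) and all remaining asymmetric or multi-bump configurations are excluded by $u\ge0$ and by the two-sided matching, which pushes the free boundaries out of $(0,L)$.

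The heart of the argument, and the step I expect to be the main obstacle, is to show that the half-domain equation in (b) and (c) has \emph{no} admissible root when $\omega\le 2\pi/L$, i.e.\ when $\chi\le\chi_2$: by the same sign analysis used above for (\ref{27}) --- $\tfrac{1}{\omega}\tan\omega\xi-\tanh(\xi-\tfrac{L}{2})$ is strictly increasing from $-\infty$ on $(\tfrac{\pi}{2\omega},\tfrac{\pi}{\omega})$ and attains $-\tanh(\tfrac{\pi}{\omega}-\tfrac{L}{2})\le0$ at the right end precisely when $\pi/\omega\ge L/2$ --- it has no zero in $(\tfrac{\pi}{2\omega},\tfrac{\pi}{\omega})$ for $\chi\le\chi_2$, while a symmetric interior bump (resp.\ double boundary bump) would require exactly such a zero (and zeros in other ranges are ruled out by $u\ge0$). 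Together with (a) and the uniqueness of the root of (\ref{27}) in $(\tfrac{\pi}{2\omega},\tfrac{\pi}{\omega})$ for $\chi\in(\chi_1,\chi_2]$, this yields that for $\chi\in(\chi_1,\chi_2)$ the only non-constant solutions are the two half-bumps, which is (iv). Once this $\chi_2$-threshold is pinned down rigorously, the exclusions in (d) and the accompanying bookkeeping are routine.
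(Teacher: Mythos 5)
Your proposal is correct and rests on the same engine as the paper: solve the ODE piecewise on positivity and vacuum components, match $v'$ and $v''$ at the free boundary, and analyze the resulting transcendental equation (\ref{27}) and its variants. The differences are in how the classification parts are closed. For (i) you invoke the global exponential attractor theorem to conclude that the only stationary state below $\chi_1$ is $(\bar u,\bar v)$; the paper instead argues statically, combining Lemma~\ref{lemma21} with the observation that $f(\xi;\omega)$ has no positive root when $\omega<\pi/L$. Your dynamic route is cleaner in that it automatically covers the degenerate regimes $\chi\le 1$ (where the in-support profile is hyperbolic or quadratic rather than trigonometric), which your static alternative would have to treat separately. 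For (iv), you are actually more complete than the paper: the paper essentially asserts (iv) from compact support plus uniqueness of the root of (\ref{27}) in $(\tfrac{\pi}{2\omega},\tfrac{\pi}{\omega})$, which by itself only settles monotone solutions, and defers the exclusion of interior bumps, double boundary bumps and asymmetric configurations to Section~3.3--3.4 and to the cited results of \cite{BCR}. Your explicit identification of the threshold mechanism --- that a symmetric interior or double boundary bump forces the half-domain equation $\tfrac{1}{\omega}\tan\omega\xi=\tanh(\xi-\tfrac{L}{2})$, whose admissible root in $(\tfrac{\pi}{2\omega},\tfrac{\pi}{\omega})$ exists if and only if $\pi/\omega<L/2$, i.e.\ $\chi>\chi_2$ --- is exactly the computation the paper leaves implicit, and your symmetry reduction for a single interior bump (matching on both sides forces the center to be $L/2$) reproduces what the paper borrows from Proposition~4.8 of \cite{BCR}. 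The same threshold also cleanly handles your case (ii): at $\chi=\chi_1$ the strictly increasing $f$ has $l^*=L$ as its only root in $(L/2,L]$, so the compactly supported solutions are precisely the endpoint members $\epsilon=\pm\bar u/\chi_1$ of (\ref{21}). The only place where your write-up is as terse as the paper's is the residual case (d); making that exclusion airtight requires the sign bookkeeping ($\mathcal B>0$ from $v>0$, hence $\omega l^*\in(\tfrac{\pi}{2},\pi)$ on every component) that you have already set up, so it is indeed routine.
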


\begin{remark}
Notice that the results (ii) and (iii) in the previous proposition are obtained in \cite{BCR} with different arguments, while (i) and (iv) are new results. We point out that the half-bump solutions can be intuitively thought as a bifurcating curve whose starting point is the limit of the positive solutions on the interval $[0,L]$ for $\chi=\chi_1$ touching zero in one side.
\end{remark}

In Figure \ref{figure1}, we plot $(u(x),v(x))$ given by (\ref{28}) and (\ref{212}) and its reflection $(u(L-x),v(L-x))$..
\begin{figure}[ht]
        \centering
\includegraphics[width=5in,height=1.5in]{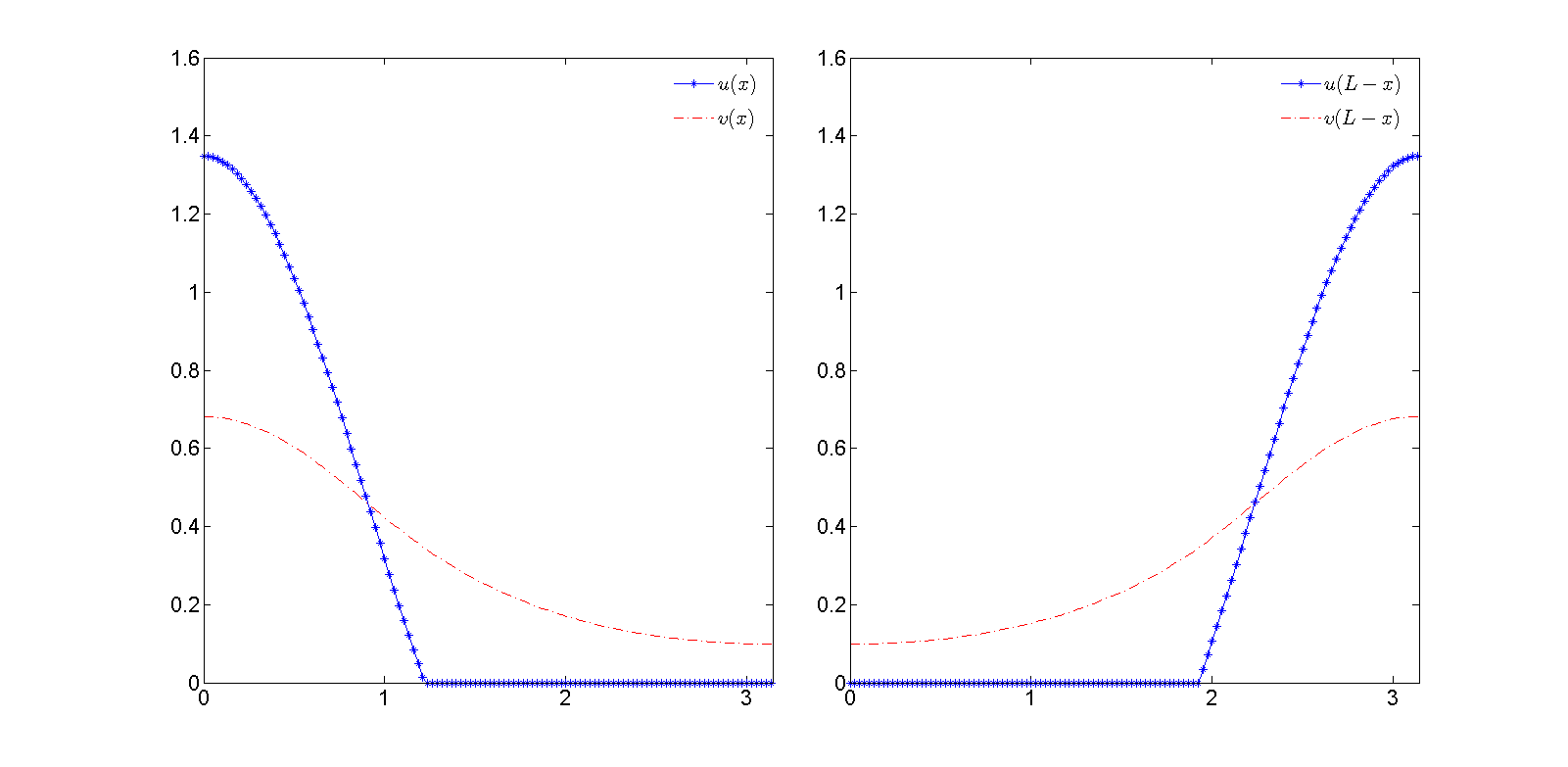}
  \caption{Half--bump solution $(u,v)(x)$ of (\ref{12}) from (\ref{28}) and (\ref{212}) and its reflection about $\frac{L}{2}$ in $(0,\pi)$.  A unit total cell population and $\chi=4$ are chosen, and the size of support $l^*=1.22$.}\label{figure1}
\end{figure}

\subsection{Asymptotic behavior of half-bumps in the limit of $\chi\rightarrow \infty$}

Next we study the effect of large $\chi$ on the half--bump $(u,v)$ established in (\ref{28}) and (\ref{212}).  In particular, we shall show that $u$ converges to a Dirac--delta function as $\chi$ goes to infinity.  First of all, we note from above that the size $l^*\in(\frac{\pi}{2\omega},\frac{\pi}{\omega})$ of support of $u(x)$ is uniquely determined by and continuously depends on $\chi$.  We claim that $l^*$ is strictly decreasing in $\chi$, and it is equivalent to show that $\frac{\partial l^*}{\partial \omega}<0$ for $\omega>0$.

Differentiating both sides of (\ref{27}) with respect to $\omega$, we have
\[-\frac{1}{\omega}\tan \omega l^*+\frac{1}{\omega \cos^2 \omega l^*}\Big(l^*+\omega\frac{\partial l^*}{\partial \omega}\Big)= \frac{1}{\cosh^2(l^*-L)}\frac{\partial l^*}{\partial \omega}.\]
With the identities $\frac{1}{\cos^2 \omega l^*}=1+\tan^2 \omega l^*$ and $\frac{1}{\cosh^2(l^*-L)}=1-\tanh^2(l^*-L)$, one gets
\[\frac{\partial l^*}{\partial \omega}=\frac{-l^*\tan^2\omega l^*+\frac{1}{\omega}\tan \omega l^*-l^*}{\omega(\tan^2\omega l^*+\tanh^2(l^*-L))},\]
then it is easy to find that $\frac{\partial l^*}{\partial \omega}<0$ since the numerator is strictly negative thanks to the fact $\omega l^* \in (\frac{\pi}{2}, \pi)$.

We next show that the magnitude of $u(x)$ is strictly increasing in $\chi$ such that $\frac{\partial \Vert u\Vert_{L^\infty}}{\partial w}>0$ for each $\chi>\chi_1$ and $\Vert u\Vert_{L^\infty}=\omega+o(1)$ as $\chi\rightarrow \infty$.  To prove the former, we denote
$z=\omega l^*$ and rewrite $\Vert u\Vert_{L^\infty}=\frac{M\omega (1-\cos z)}{\sin z-z\cos z}$. Then we have
\begin{align}
\frac{\partial \Vert u\Vert_{L^\infty}}{\partial \omega}
=&\frac{(1-\cos z +\omega \sin z \frac{\partial z}{\partial \omega})(\sin z-z\cos z)-w(1-\cos z) z\sin z \frac{\partial z}{\partial \omega}}{(\sin z-z\cos z)^2} \nonumber \\
=&\frac{(1-\cos z)(\sin z-z\cos z)+\omega \sin z (\sin z-z)\frac{\partial z}{\partial \omega}}{(\sin z-z\cos z)^2},  \nonumber
\end{align}
which, in light of the identity $\frac{\partial z}{\partial\omega }=l^*+\omega \frac{\partial l^*}{\partial\omega }$, becomes
\begin{align}
\frac{\partial \Vert u\Vert_{L^\infty}}{\partial \omega}
=&\frac{(1-\cos z)(\sin z-z\cos z)+z \sin z (\sin z-z)}{(\sin z-z\cos z)^2}+\frac{w^2\sin z (\sin z-z)}{(\sin z-z\cos z)^2}\frac{\partial l^*}{\partial \omega} \nonumber\\
\geq &\frac{(1-\cos z)(\sin z-z\cos z)+z \sin z (\sin z-z)}{(\sin z-z\cos z)^2},\nonumber
\end{align}
where we have applied the fact that $\sin z\leq z$, $z\in(\frac{\pi}{2},\pi)$ and $\frac{\partial l^*}{\partial \omega}<0$ for the inequality.

Denote
$g(z):=(1-\cos z)(\sin z-z\cos z)+z \sin z (\sin z-z).$
Then, in order to prove $\frac{\partial \Vert u\Vert_{L^\infty}}{\partial \omega}>0$, it  suffices to show that $g(z)>0$, $z\in(\frac{\pi}{2},\pi)$.  To this end, we first observe that $g(\frac{\pi}{2})=1+\frac{\pi}{2}-(\frac{\pi}{2})^2>0$; moreover, we find $g'(\frac{\pi}{2})=2+(\frac{\pi}{2})^2-\frac{\pi}{2}>0$ and
$g''(z)=(4\sin z -3z)\cos z +(z^2-1)\sin z>0$,
therefore $g'(z)>0$ and hence $g(z)>g(\frac{\pi}{2})>0$ for all $z\in (\frac{\pi}{2},\pi)$ as expected.  This finishes the proof.

To verify the latter, we first claim that $z:=\omega l^*\rightarrow (\frac{\pi}{2})^+$ as $\chi\rightarrow \infty$.  If not, say $\omega l^*\rightarrow \theta\in (\frac{\pi}{2},\pi]$ as $\chi \to \infty$, then the left hand side of (\ref{27}) converges to zero and as a consequence $l^*\rightarrow L^-$ as $\chi\rightarrow \infty$. However yields a contradiction since $l^*\in (\frac{\pi}{2\omega},\frac{\pi}{\omega})$ and $l^*\rightarrow 0^+$ as $\chi\rightarrow \infty$.  Then it is easy to see that $\Vert u\Vert_{L^\infty}=\frac{\omega (1-\cos z)}{\sin z-z\cos z}=\omega+o(1)$ as $\chi\rightarrow \infty$.

We proceed to study the effect of large $\chi$ on the profiles of $u(x)$ and $v(x)$ given by (\ref{28}) and (\ref{212}).  Since $l^*\rightarrow 0^+$ and $\omega l^*\rightarrow (\frac{\pi}{2})^+$ as $\chi\rightarrow \infty$, it is easy to see that $\mathcal A\rightarrow \infty$ and we have that $u(x)\rightarrow u_\infty=\bar u L \delta(x)$ pointwisely, where $\delta(x)$ is the Dirac Delta at $x=0$.

On the other hand, as $\chi\rightarrow \infty$, $v(x)\rightarrow v_\infty(x)$ pointwisely in $(0,L)$, where $v_\infty(x)$ satisfies $v''_\infty-v_\infty=0$ in $(0,L)$, therefore $v_\infty=\mathcal B_\infty \cosh (x-L)$ with $\mathcal B_\infty=\frac{\bar u L}{\sinh L}$ follows from the fact that $\int_0^L v_\infty (x)dx=\bar u L$.

\begin{figure}[ht]
        \centering
\includegraphics[width=\textwidth,height=3in]{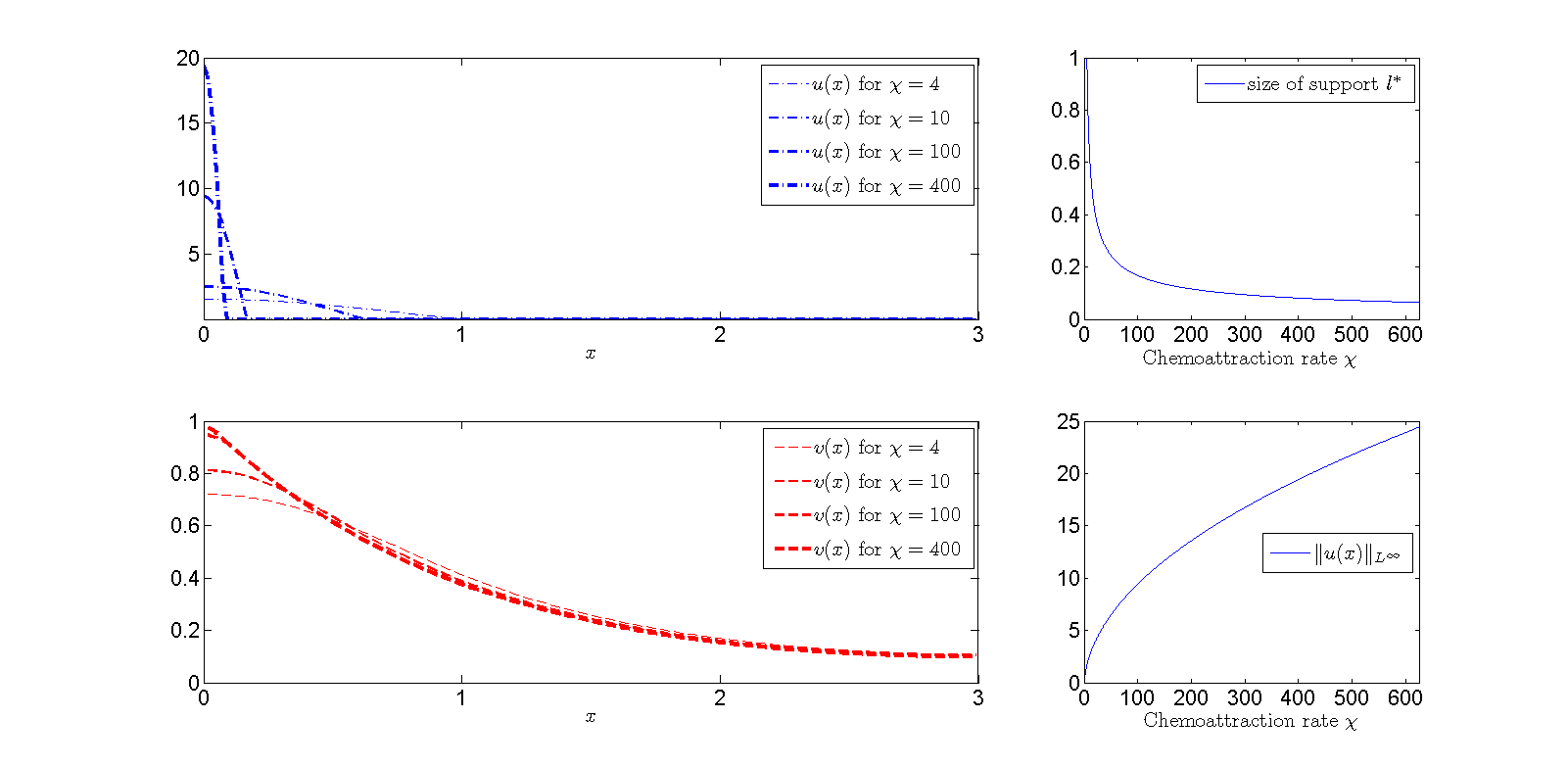}
  \caption{Left panel: Plot of the solution $(u(x),v(x))$ for $\chi=4,10,100$ and $400$.  It is observed that $u(x)\rightarrow \delta(x)$ and $v(x)\rightarrow \frac{\cosh(3-x)}{\sinh 3}$ pointwisely as $\chi\rightarrow \infty$.  Right panel: Plot of the asymptotic convergence of $l^*$ and $\Vert u\Vert_{L^\infty}$ with respect to $\chi$, where we observe that as $\chi\rightarrow \infty$, the support size $l^*$ of $u(x)$ shrinks to zero while the maximum of $u(x)$ grows to $\sqrt{\chi-1}$.  Overall, the chemotaxis rate $\chi$ enhances the formation of (boundary) spikes, namely  the boundary cell aggregation for these solutions.}\label{figure2}
\end{figure}

\begin{remark}
Indeed, it is known that the Green's function $G(x;x_0)$ of
\[
\left\{\begin{array}{ll}
-G''+G=\delta(x;x_0),&x\in(0,L),\\
G'(0;x_0)=G'(L;x_0)=0.&
\end{array}
\right.
\quad
\mbox{is}
\quad
G(x;x_0)=
\left\{\begin{array}{ll}
\frac{\cosh(L-x_0)}{\sinh L}\cosh x,&x\in(0,x_0),\\
\frac{\cosh x_0}{\sinh L}\cosh(L-x),& x\in (x_0,L).
\end{array}
\right.
\]
Now since $u(x)\rightarrow\bar u  \delta_0(x)$, we have that $v(x)\rightarrow \bar u  G(x;0)$, which is the same as what we obtain above.  See Figure \ref{figure2} for illustration.
\end{remark}

\subsection{Similar-bumps}

In the preceding section, we have shown for each $\chi\in(\chi_1,\chi_2)$,  all the non--constant solution $(u,v)$ of (\ref{12}) must be monotone and given by (\ref{28}) and (\ref{212}) or its reflection $(u(x),v(x))$ or $(u(L-x),c(L-x))$.  We now proceed to look for non--monotone solutions of (\ref{12}) and we shall assume $\chi>\chi_2$ from now on.  Note that that when $\chi=\chi_k$, $k\geq2$, there exist solutions of (\ref{12}) explicitly given by the one--parameter family (\ref{21}) such that both $u(x)$ and $v(x)$ are positive in $(0,L)$.  It is our goal to investigate solutions such that $u(x)$ is compactly supported, i.e., it vanishes in subset(s) of $(0,L)$ with a positive measure.  The main results of this section can be summarized as follows: for $\chi\in(\chi_2,\infty)$, there exists non--monotone solutions; and for each $\chi\in(\chi_k,\chi_{k+1})$, $k\geq2$, the sign of $v'(x)$ changes at most $k-1$ times in $(0,L)$.

Hereafter by ``similar-bump solutions'' we mean the bump-profile solutions with the same amplitude whose profile will be generally given later in (\ref{similar-bump}).
For simplicity, we depart with the construction of non--monotone solutions with two similar half-spike profiles, denoted by $(u_2(x),v_2(x))$, and approach as follows: (i) find $(u,v)$ over $(0,\frac{L}{2})$ as in the previous section; (ii) reflect it about $x=\frac{L}{2}$ to obtain solution of (\ref{12}) over the whole interval.  Then $u$ and $v$ are symmetric about $\frac{L}{2}$ with $v'(\frac{L}{2})=0$.

Since $\chi>\chi_2$, by the same arguments as for (\ref{27}), there exists $l_2^*\in(\frac{\pi}{2\omega},\frac{\pi}{\omega})$, which is the size of support of $u(x)$ in $(0,\frac{L}{2})$, such that
$\frac{1}{\omega}\tan \omega l_2^*=\tanh \big(l_2^*-\frac{L}{2}\big).$
Then we solve (\ref{12}) over $(0,\frac{L}{2})$ to find its solution $(u,v)$ given by
\[
\mathbb U_2(x)=\left\{\begin{array}{ll}
\mathcal A_2\big(\cos \omega x-\cos \omega l_2^* \big),&x\in(0,l_2^*),\\[2mm]
0,& x\not \in (0,l_2^*),
\end{array}
\right.
\quad
\mbox{and}
\quad
\mathbb V_2(x)=\left\{\begin{array}{ll}
\mathcal A_2\big(\frac{1}{\chi}\cos \omega x-\cos \omega l_2^* \big),&x\in(0,l_2^*),\\[2mm]
\mathcal B_2\cosh (x-\frac{L}{2}),& x\in (l_2^*,\frac{L}{2}),\\
0,& x\not \in (0,\frac{L}{2}),
\end{array}
\right.
\]
with
\begin{equation*}
\mathcal A_2=\frac{\bar u L/2}{\frac{1}{\omega}\sin \omega l_2^*-l_2^*\cos \omega l_2^*}, \
\mathcal B_2=\frac{\bar u L(\frac{1}{\chi}-1)/2}{\big(\frac{1}{\omega}\tan \omega l_2^*-l_2^*\big)\cosh (l_2^*-\frac{L}{2})},
\end{equation*}

By reflecting $(\mathbb U_2,\mathbb V_2)$ about $x=\frac{L}{2}$, we can find that the solutions of our interest must be one of the pairs $(u_2^\pm,v_2^\pm)$ with
$
(u^+_2,v^+_2)(x)=(\mathbb U_2,\mathbb V_2)(x)+(\mathbb U_2,\mathbb V_2)(L-x)
$
or
$
(u^-_2,v^-_2)(x)=(\mathbb U_2,\mathbb V_2)(L/2-x)+(\mathbb U_2,\mathbb V_2)(x-L/2).
$
Note that $(u_2^+,v_2^+)$ correspond to a double--boundary--spike solution, $(u_2^-,v_2^-)$ the single--interior--spike solution.  See the first column in Figure \ref{figure3} for illustration of $(u_2^\pm,v_2^\pm)$.

Next we extend $(u_2,v_2)$ obtained above to multiple half--bumps $(u_k,v_k)$ with $k\geq3$, for which we assume that $\chi>\chi_k$, $k\geq3$ from now on.  Similar as above, our strategy is to first solve (\ref{12}) over $(0,\frac{L}{k})$ for $(\mathbb U_k,\mathbb V_k)$ and continuously reflect this half-spike bump profile at $x=\frac{L}{k}, \frac{2L}{k},\cdots$ until it eventually extends to the whole interval $(0,L)$, as to be realized mathematically shown below.
Since $\chi>\chi_k$, there exists a unique  $l_k^*\in (\frac{\pi}{2\omega},\frac{\pi}{\omega})$ such that
$\frac{1}{\omega} \tan \omega l_k^*=\tanh \Big(l_k^*-\frac{L}{k}\Big).$
Then we can find that
\[
\mathbb{U}_k(x)=\left\{\begin{array}{ll}
\mathcal A_k\big(\cos \omega x-\cos \omega l_k^* \big),&x\in(0,l_k^*),\\
0,& x\not\in(0,l_k^*),
\end{array}
\right.
\quad
\mbox{and}
\quad
\mathbb{V}_k(x)=\left\{\begin{array}{ll}
\mathcal A_k\big(\frac{1}{\chi}\cos \omega x-\cos \omega l_k^* \big),&x\in(0,l_k^*),\\
\mathcal B_k\cosh (x-\frac{L}{k}),& x\in (l_k^*,\frac{L}{k}),\\
0, & x\not \in (0,\frac{L}{k}).
\end{array}
\right.
\]
where
\begin{equation*}
\mathcal A_k=\frac{\bar u L/k}{\frac{1}{\omega}\sin \omega l_k^*-l_k^*\cos \omega l_k^*},
\mathcal B_k=\frac{\bar u L(\frac{1}{\chi}-1)/k}{\big(\frac{1}{\omega}\tan \omega l_k^*-l_k^*\big)\cosh (l_k^*-\frac{L}{k})},
\end{equation*}
Then $(\mathbb U_k,\mathbb V_k)$ solves (\ref{12}) over $(0,\frac{L}{k})$ with $\int_0^\frac{L}{k}\mathbb U_k(x)dx=\frac{1}{k}$.  By reflecting and extending it at $x=\frac{2L}{k},\frac{3L}{k}...$, we find that the following two pairs $(u_k^{\pm},v_k^{\pm})$  solve (\ref{12}) over $(0,L)$:
\begin{eqnarray}\label{similar-bump}
\begin{aligned}
&(u^+_k, v^+_k)(x)=\sum_{i=0}^{[\frac{k}{2}]}  (\mathbb{U}_k,\mathbb{V}_k)\Big(\frac{2iL}{k}-x\Big)+(\mathbb{U}_k, \mathbb{V}_k)\Big(x-\frac{2iL}{k}\Big),\\
&(u^-_k,v^-_k)(x)=\sum_{i=1}^{[\frac{k}{2}]+1}  (\mathbb{U}_k,\mathbb{V}_k)\Big(\frac{(2i-1)L}{k}-x\Big)+(\mathbb{U}_k,\mathbb{V}_k)\Big(x-\frac{(2i-1)L}{k}\Big),
\end{aligned}
\end{eqnarray}
It is easy to check that both $u_k^{\pm}$ and $v_k^{\pm}$ have $k$ half--bumps.  See Figure \ref{figure3} for illustrations of these similar profiled solutions.

\begin{figure}[ht]
        \centering
\includegraphics[width=\textwidth,height=3in]{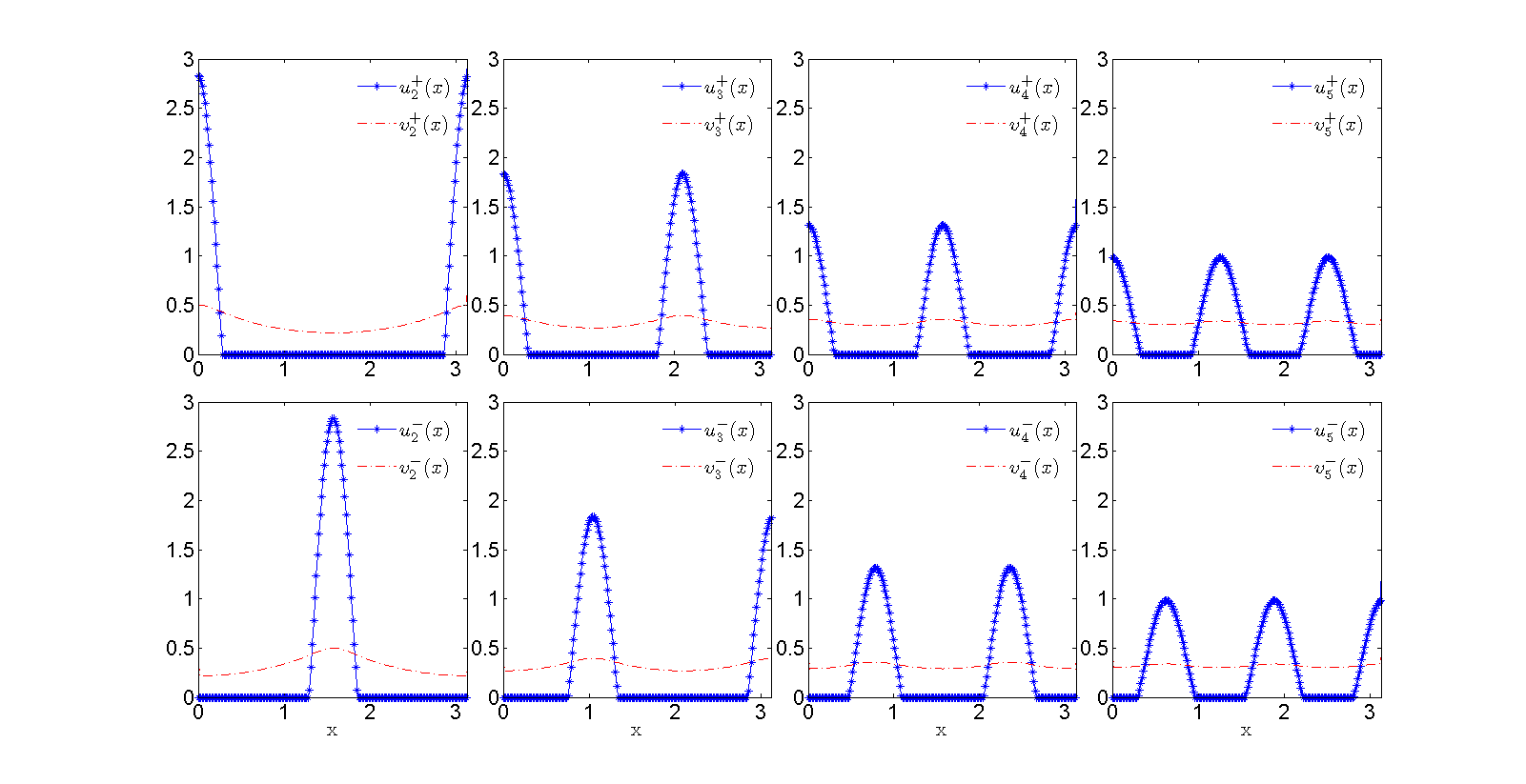}
  \caption{Plot of the pairs $(u_k^\pm,v_k^\pm)$ over $(0,\pi)$ for $k=2,3,4,$ and $5$ in columns from the left to the right, $k$ corresponding to the number of half--spikes, where $\chi=40$ and a unit total cell population are chosen.}\label{figure3}
\end{figure}

In summary, our similar--bump solutions can be summarized as follows.

\begin{proposition}
Let $n_0\geq 1$ be an arbitrary positive integer, then (\ref{12}) has a unique pair of similar--bump solutions $(u_k^\pm,v_k^\pm)(x)$ with $n_0$ half--bumps, explicitly given by (\ref{similar-bump}), if and only if $\chi\geq\chi_{n_0}$. Moreover, if $\chi\geq \chi_{n_0}$ and $n_0\geq 2$, (\ref{12}) there also exist similar--bump solutions for each $k=1,...,n_0-1$.
\end{proposition}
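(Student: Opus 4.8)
The plan is to reduce the question to the single half--bump analysis of Section~3.1 carried out on a subinterval of length $L/n_0$, and then to assemble the solution on $(0,L)$ by reflection. First I would record the structural fact behind the definition: a similar--bump solution of (\ref{12}) on $(0,L)$ with $n_0$ half--bumps is, by construction, obtained by reflecting one monotone half--bump $n_0-1$ times; since the $n_0$ half--bumps tile $(0,L)$ and are mutual reflections, the elementary block must itself solve (\ref{12}) on an interval of length exactly $L/n_0$ with homogeneous Neumann conditions at both endpoints and with $u$ compactly supported. Hence everything reduces to the solvability and uniqueness of that block.

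Next I would rerun the computation of Section~3.1 verbatim, replacing $L$ by $L/n_0$. This shows that a monotone compactly supported block on $(0,L/n_0)$ exists precisely when $\tfrac1\omega\tan\omega l^*=\tanh(l^*-\tfrac{L}{n_0})$ has a root $l^*\in(\tfrac{\pi}{2\omega},\tfrac{\pi}{\omega})\subset(0,\tfrac{L}{n_0})$, and that in that case the block is forced to be $(\mathbb U_{n_0},\mathbb V_{n_0})$ with the stated $\mathcal A_{n_0},\mathcal B_{n_0}$; exactly as in Section~3.1, every other root of the algebraic equation lies outside $(\tfrac{\pi}{2\omega},\tfrac{\pi}{\omega})$ and is discarded because it would force $u$ to change sign, so the monotone block is unique. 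Monotonicity of $f(\xi;\omega)=\tfrac1\omega\tan\omega\xi-\tanh(\xi-\tfrac{L}{n_0})$ on $(\tfrac{\pi}{2\omega},\tfrac{\pi}{\omega})$ (its derivative being $\tan^2\omega\xi+\tanh^2(\xi-\tfrac{L}{n_0})\ge 0$) together with $f((\tfrac{\pi}{2\omega})^+;\omega)=-\infty$ pins down the threshold: a root in $(\tfrac{\pi}{2\omega},\tfrac{\pi}{\omega}]$ exists iff $\tfrac{\pi}{\omega}\le\tfrac{L}{n_0}$, i.e. iff $\omega\ge\tfrac{n_0\pi}{L}$, i.e. iff $\chi\ge\chi_{n_0}$, the inequality being strict except at $\chi=\chi_{n_0}$, where $l^*=\tfrac{L}{n_0}$ and the block degenerates to $\bar u(1+\cos\tfrac{n_0\pi x}{L})$ touching zero at the right endpoint. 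For $\chi<\chi_{n_0}$ there is no block, hence no similar--bump solution with $n_0$ half--bumps.

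Then I would glue: reflecting $(\mathbb U_{n_0},\mathbb V_{n_0})$ successively across $x=\tfrac{L}{n_0},\tfrac{2L}{n_0},\dots$ produces a distributional solution of (\ref{12}) on $(0,L)$ --- at every reflection point $v'$ vanishes and $v''-v=0$ holds on both sides, so $v$ matches in $C^\infty$, while $u\equiv 0$ near each reflection point, so $u$ matches trivially. This assembly is rigid once the orientation of the first block is fixed, and there are exactly two orientations (peak of $u$ at the left endpoint, versus at the first reflection point), which produce the pair $(u_{n_0}^\pm,v_{n_0}^\pm)$ written out in (\ref{similar-bump}); together with uniqueness of the block this yields ``a unique pair of similar--bump solutions with $n_0$ half--bumps if and only if $\chi\ge\chi_{n_0}$''. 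The ``moreover'' is then immediate: since $\chi_k=(\tfrac{k\pi}{L})^2+1$ is strictly increasing in $k$, $\chi\ge\chi_{n_0}$ with $n_0\ge 2$ forces $\chi>\chi_k$ for every $k\in\{1,\dots,n_0-1\}$, and the same construction applied with $n_0$ replaced by $k$ produces the similar--bump pair with $k$ half--bumps.

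The step I expect to be the main obstacle is not the explicit construction but the uniqueness claim: one has to argue rigorously that a similar--bump solution, being by definition assembled from a single half--bump by reflections, necessarily restricts to the unique monotone compactly supported block on a length--$L/n_0$ interval (which needs the sign argument of Section~3.1 to eliminate the spurious roots of the algebraic equation), and that the reflective assembly admits precisely the two listed orientations, so that one genuinely obtains a single pair --- neither fewer nor more.
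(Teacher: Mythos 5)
Your proposal is correct and follows essentially the same route as the paper: solve the half--bump problem on a subinterval of length $L/n_0$, obtaining the algebraic equation $\tfrac{1}{\omega}\tan\omega l^*=\tanh(l^*-\tfrac{L}{n_0})$ with a unique admissible root in $(\tfrac{\pi}{2\omega},\tfrac{\pi}{\omega}]$ precisely when $\chi\ge\chi_{n_0}$ (spurious roots discarded by the sign argument of Section~3.1), and then assemble $(u_{n_0}^\pm,v_{n_0}^\pm)$ by successive reflections. Your added care at the threshold case $\chi=\chi_{n_0}$ and in counting the two orientations of the first block only makes explicit what the paper leaves implicit.
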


It seems necessary to point out that, similar as above, one can show that $\Vert u_k\Vert_{L^\infty}$ is strictly increasing in $\chi$, $\Vert u_k\Vert_{L^\infty}=\frac{\omega}{\sqrt{k}}+o(1)$ as $\chi\rightarrow \infty$, and $u_k$ converges to a linear combination of Dirac delta function.  Moreover, one can find that $\Vert u_1\Vert_{L^\infty}>\Vert u_2\Vert_{L^\infty}>...>\Vert u_k\Vert_{L^\infty}$. A numerical illustration of these facts is presented in Figure \ref{figure3}. Let us finally mention that again these $k$ half-bumps can be considered intuitively as bifurcation curves emanating from the corresponding $k$ half-bumps touching zero at the end points of the interval $[0,L]$ existing for the first time at $\chi=\chi_k$.

\subsection{Asymmetric multi--bump and interior--bump solutions}
We point out that for each $\chi>\chi_2$, (\ref{12}) also admits asymmetric multi--bump solutions, which we shall denote by $(u^\#_2,v^\#_2)$, as it has been done in Section 4.5 of \cite{BCR} for two half--bumps.  Though not done or stated there explicitly, one can easily see that their work covers the asymmetric bumps with more than 3 aggregates-see Proposition 4.7 in \cite{BCR}.

Let us start by finding the explicit formula of $(u^\#_2,v^\#_2)$ as follows: for each $\chi>\chi_2$, one can chooses an arbitrary $L_0\in(\frac{\pi}{\omega},L-\frac{\pi}{\omega})$ and find $l^*$ and $l^{**}$ such that $\frac{1}{\omega}\tan \omega l^*=\tanh (l^*-L_0)$ and $\frac{1}{\omega}\tan \omega l^{**}=\tanh \big(l^{**}-(L-L_0)\big)$.
Then we the two aggregates supported by $(0,l^*)$ and $(L-l^{**},L)$ and find that
\[
u^\#_2(x)=\left\{\begin{array}{ll}
\mathcal A_l\big(\cos \omega x-\cos \omega l^* \big),&x\in(0,l^*),\\
0,& x\in (l^*,L-l^{**}),\\
\mathcal A_r\big(\cos \omega (x-L)-\cos \omega l^* \big),&x\in(L-l^{**},L),\\
\end{array}
\right.
\]
and
\[
v^\#_2(x)=\left\{\begin{array}{ll}
\mathcal A_l\big(\frac{1}{\chi}\cos \omega x-\cos \omega l^* \big),&x\in(0,l^*),\\
\mathcal B_l\cosh (x-L_0),& x\in (l^*,L_0),\\
\mathcal B_r\cosh (x-L_0),& x\in (L_0,L-l^{**}),\\
\mathcal A_r\big(\frac{1}{\chi}\cos \omega (x-L)-\cos \omega l^{**} \big),&x\in(L-l^{**},L),\\
\end{array}
\right.
\]
with $m_1$ and $m_2$ being the cell population on the left and right aggregates respectively
\begin{equation*}
\mathcal A_l=\frac{m_1}{\frac{1}{\omega}\sin \omega l^*-l^*\cos \omega l^*}, \mathcal A_r=\frac{m_2}{\frac{1}{\omega}\sin \omega l^{**}-l^{**}\cos \omega l^{**}},
\end{equation*}
and
\begin{equation*}
\mathcal B_l=\frac{m_1(1-\frac{1}{\chi})}{(l^*-\frac{1}{\omega}\tan \omega l^*)\cosh(L_0-l^*)}, \mathcal B_r=\frac{m_2(1-\frac{1}{\chi})}{(l^{**}-\frac{1}{\omega}\tan \omega l^{**})\cosh(L-L_0-l^{**})};
\end{equation*}
moreover, the continuity of $v(x)$ at $L_0$ implies that $\mathcal B_l=\mathcal B_r$, i.e.,
\[\frac{m_1}{(l^*-\frac{1}{\omega}\tan \omega l^*)\cosh(L_0-l^*)}=\frac{m_2}{(l^{**}-\frac{1}{\omega}\tan \omega l^{**})\cosh (L-L_0-l^{**})}.\]

In terms of the asymmetric multi--bump solutions, we are able to construct solutions with more aggregates and complex patterns.  See Figure \ref{figure5} for illustration.
\begin{figure}[ht]
        \centering
\includegraphics[width=\textwidth,height=2.4in]{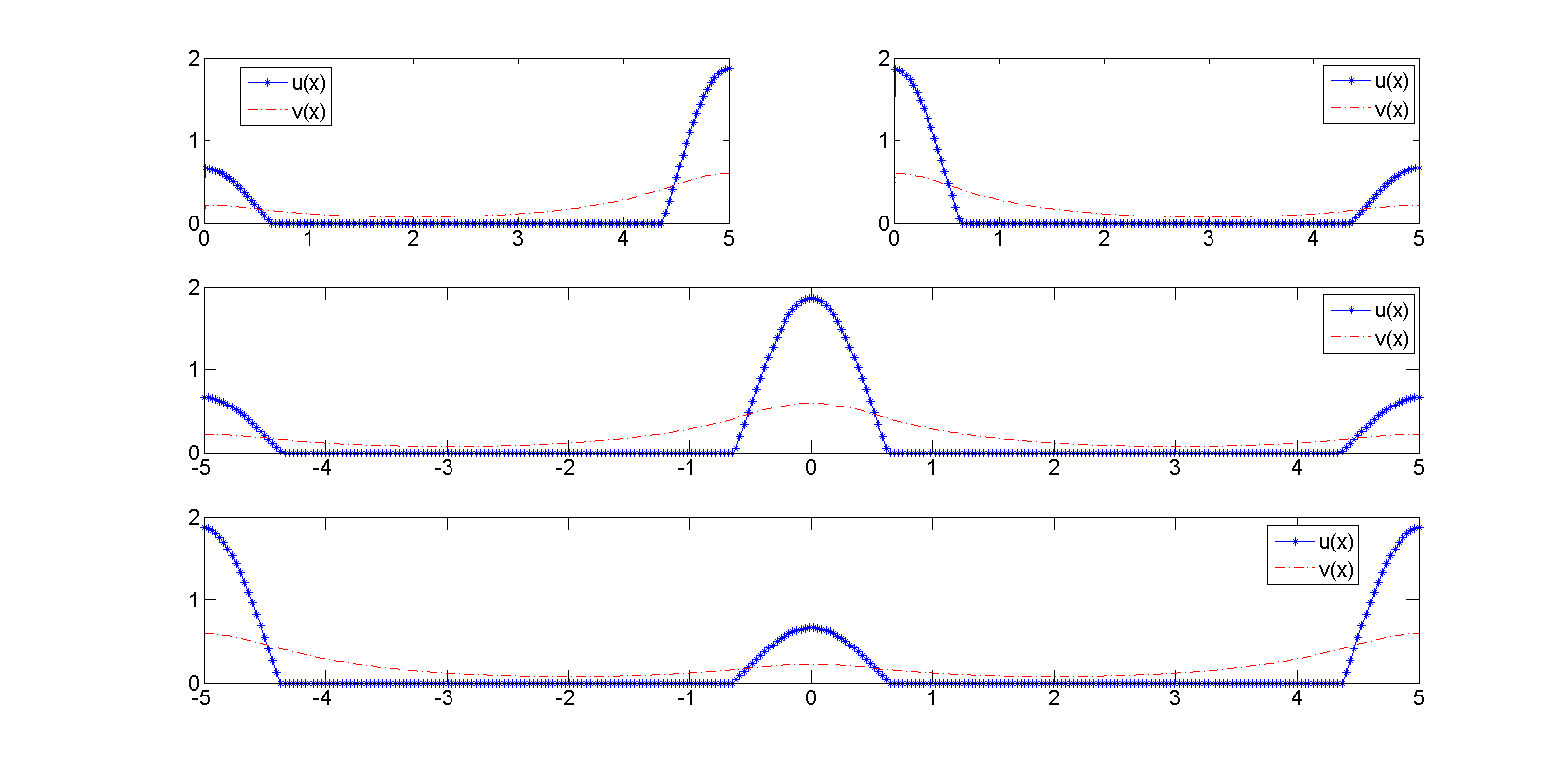}
  \caption{In the top row, we plot $u^\#_2(x)$ with asymmetric double boundary spikes and its reflection $u^\#_2(L-x)$ over $(0,5)$ with $\chi=10$.  Here we choose $L_0=3$ and find that $l^*=0.6326$ and $l^{**}=0.6449$.  In the middle and bottom rows, we piece together the two aggregates at the large and small spikes to obtain solutions with large and small interior spikes.}\label{figure5}
\end{figure}

In contrast to the similar--bump case, in which one can combine the two boundary half--bumps to a single interior bump,  Proposition 4.8 in \cite{BCR} states that if $u(x)$ is a single interior spike in $(0,L)$, then it is symmetric about $x=\frac{L}{2} \triangleq L_0$.  One can show that this holds true for multi--half bumps.  We summarize the results as follows.

\begin{proposition}
Let $n_0\geq 2$ be an arbitrary positive integer.  If $\chi>\chi_{n_0}$, (\ref{12}) has infinitely many asymmetric multi--bump solutions $(u_k^\#,v_k^\#)$ that have $k$ half--bumps, for each $k=2,...,n_0-1$, and the graph of $u^\#_k$ is symmetric within each connected component of its support in $(0,L)$, except the half--bumps on the boundaries; moreover, if $\chi<\chi_{n_0+1}$, then any solution $(u,v)$ of (\ref{12}) has at most $n_0$ half--bumps.
\end{proposition}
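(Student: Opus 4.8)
The plan is to handle the three assertions in turn, throughout reusing the half-bump analysis of Section 3. I would begin with the symmetry claim, which is purely local. On any connected component $(a,b)\subset(0,L)$ of $\{u>0\}$ the first equation of \eqref{12} forces $u-\chi v\equiv\lambda$ for some constant $\lambda$, hence $v''+\omega^2 v=-\lambda$ on $(a,b)$ with $\omega^2=\chi-1$; solving, $v=C\cos(\omega(x-c))-\lambda/\omega^2$ and therefore $u=\chi v+\lambda=\chi C\cos(\omega(x-c))-\lambda/\omega^2$. This function is even about $x=c$, and since it is positive on $(a,b)$ and vanishes at $a$ and $b$, the set where it is positive is a single lobe of a shifted cosine; hence $\{a,b\}$ is symmetric about $c$, and $u$ is symmetric about the midpoint $(a+b)/2$ of the component. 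The components touching $\{0,L\}$ are precisely the monotone half-bumps \eqref{28}--\eqref{212}, which are not symmetric.

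For the existence of infinitely many asymmetric $k$-bump solutions with $2\le k\le n_0-1$, the plan is to extend the explicit two-bump construction of $(u_2^\#,v_2^\#)$ (and Section 4.5 of \cite{BCR}) to $k$ building blocks. Since $\chi>\chi_{n_0}\ge\chi_{k+1}$ we have $L>(k+1)\pi/\omega$, so there is ample room to partition $[0,L]$ into $k$ virtual intervals of lengths $\ell_j>\pi/\omega$, with slack to perturb the breakpoints, grouping them into single intervals carrying boundary half-bumps — as in \eqref{28}--\eqref{212}, with $L$ replaced by $\ell_j$ — and into equal consecutive pairs carrying the symmetric interior bumps obtained by reflection, arranged so that $v'$ vanishes at every breakpoint. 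The remaining freedom, the bump masses, is pinned down by continuity of $v$ at each breakpoint together with $\int_0^L u=M$; this is solvable with positive masses by the same linear sign-count as in the $k=2$ case, each $\mathcal B_i$ being linear in the corresponding mass as in \eqref{211}. Letting one admissible breakpoint range over an interval produces a continuum of distinct solutions, asymmetric away from the balanced configuration.

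The third assertion is the heart of the matter. Let $(u,v)$ solve \eqref{12} and carry $k$ half-bumps. On each maximal interval of monotonicity of $v$ there lies exactly one maximal monotone piece of $u$ — half of an interior bump, or a whole boundary half-bump, of support width $\ell^*\in(\pi/2\omega,\pi/\omega]$ — and conversely each monotone piece of $u$ sits in exactly one monotonicity interval of $v$, since such an interval cannot be all-vacuum ($v=\mathcal B\cosh(\cdot)$ has no interior maximum) nor straddle a vacuum valley. Hence the $k$ half-bumps induce a partition $0=q_0<q_1<\cdots<q_k=L$ with $v$ strictly monotone on each $[q_{j-1},q_j]$ and $v'(q_j)=0$, the non-bump part of $[q_{j-1},q_j]$ being a vacuum stretch on which $v=\mathcal B_j\cosh(x-q_j)$ or $\mathcal B_j\cosh(x-q_{j-1})$. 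Matching $v$ and $v'$ across the single bump/vacuum interface inside $[q_{j-1},q_j]$ collapses, exactly as in the derivation of \eqref{27}, to
\[
\frac1\omega\tan\big(\omega\ell_j^*\big)=\tanh\big(\ell_j^*-(q_j-q_{j-1})\big),
\]
where $\ell_j^*$ is the support width of the half-bump in that interval. By the analysis of $f(\xi;\omega)$ surrounding \eqref{27}, this equation can have a root $\ell_j^*\in(\pi/2\omega,\pi/\omega]$ only when $q_j-q_{j-1}\ge\pi/\omega$. Summing, $L=\sum_{j=1}^k(q_j-q_{j-1})\ge k\pi/\omega=k\pi/\sqrt{\chi-1}$, so $\chi\ge 1+(k\pi/L)^2=\chi_k$. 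Contrapositively, if $\chi<\chi_{n_0+1}$ then no solution of \eqref{12} can carry $n_0+1$ or more half-bumps, so every solution has at most $n_0$.

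The routine ingredients are the interior symmetry (a one-line ODE calculation) and the bump-mass bookkeeping in the existence step, a verbatim extension of the $k=2$ computation. The real obstacle is the structural step above: establishing the bijection between half-bumps and the monotonicity intervals of $v$ (handling carefully the vacuum valleys that may fall on $\{0,L\}$ and the degenerate tangential bumps with $\omega\ell^*=\pi$), and then recognizing that the local interface conditions on each monotonicity interval are precisely \eqref{27} with $L$ replaced by the length of that interval. Once this is secured, the sharp threshold $\chi_k$, and with it the bound $n_0$, follows at once.
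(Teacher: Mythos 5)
Your proposal is correct and follows essentially the same route as the paper: the interior symmetry comes from the explicit cosine profile forced by $u-\chi v=\mathrm{const}$ on each component, the asymmetric families are built by the same block-and-reflect construction with free breakpoints as in the two-bump case (with masses fixed by continuity of $v$ and $\int_0^L u=M$), and the upper bound on the number of half-bumps reduces to the root analysis of the matching equation \eqref{27} with $L$ replaced by the length of each monotonicity interval of $v$, forcing each such length to be at least $\pi/\omega$. Your write-up of the last step (the bijection between half-bumps and monotonicity intervals and the summation $L\ge k\pi/\omega$) is in fact more explicit than what the paper records, which largely defers to the discussion around \eqref{27} and to \cite{BCR}.
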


As can be easily computed there always exist multi--bump solutions and the spatial--temporal dynamics are rich and complex as shall be seen later in numerical simulations when $\chi$ is large.


\section{Gradient-Flow Structure}
It is known that the system (\ref{11}) has the following free energy
\begin{equation}\label{41}
\mathcal E(u(x,t),v(x,t))=\frac{1}{\chi}\int u^2dx+\int (v_x^2+v^2-2 uv) \,dx,
\end{equation}
that is non--increasing along the solution trajectory of (\ref{11}), and its dissipation is given by
\begin{equation}\label{42}
  \frac{d\mathcal E}{dt}=-\frac{2}{\chi}\int  u |( u -\chi v)_x|^2dx- 2\int |v_t|^2dx:= - \mathcal I\leq 0, \quad \mbox{for all } t>0.
\end{equation}
The free energy $\mathcal E$ allows for a gradient flow structure of this system in a product space, see \cite{CC,CLM,BL,BCKKLL}. The hybrid gradient flow structure introduced in \cite{BL,BCKKLL} treats the evolution of the cell density in probability measures while the evolution of the chemoattractant is done in the $L^2$-setting. The gradient flow structure used in probability densities follows the blueprint of the general gradient flow equations treated in \cite{AGS,Vil_1,CMcCV03}. Moreover, solutions were proved to be unique among the class of bounded densities \cite{CLM}.

Note that the free energy $\mathcal E$ is a Liapunov functional since steady states $(u_s, v_s)$ are characterized by zero dissipation $\mathcal I(u_s, v_s)=0$. Therefore, we readily obtain that for any steady state $(u_s, v_s)$, the quantity $u_s -\chi v_s$ must be constant in each connected component of the support of the cell density $u_s$.  Note that the $v$--equation of (\ref{12}) readily gives us
\[
\int_0^L \big[(v_s)_x^2+v_s^2\big]dx=\int_0^L u_sv_s\, dx\,,
\quad
\mbox{which implies that}
\quad
\mathcal E(u_s, v_s)=\frac{1}{\chi}\int_0^L u_s (u_s-\chi v_s)\, dx.\]
Moreover, since $u_s-\chi v_s=\lambda _i$ for some constants $\lambda _i$ on each of
the possibly countable many connected components of the support of $u_s$, denoted by $\text{sppt}_i$, we have that
\begin{equation}\label{44}
\mathcal E(u_s,v_s)=\frac{1}{\chi}\sum_{i}\int_{\text{sppt}_i} \lambda _i u_s\, dx.
\end{equation}
Notice that all the constructed stationary states in the previous section have finitely many connected components in their support and the total number of connected components is less than $n_0$, if $\chi<\chi_{n_0}$, as discussed above. We now study the energy of the steady states constructed above.  First of all, we find that the constant solution $(\bar u,\bar v)$ has free energy
$
\mathcal E(\bar u, \bar v)=\frac{(1-\chi)\bar u}{\chi}M=-\frac{\omega^2M^2}{\chi L},
$
and the boundary spike $(u_1,v_1)$ has free energy
\[\mathcal E(u_1, v_1)=\frac{\cos \omega l^*}{\frac{1}{\omega}\sin  \omega l^*-l^*\cos  \omega l^*}\frac{\omega^2M^2}{\chi}=\frac{\omega}{\tan z-z}\frac{\omega^2M^2}{\chi},\]
where $z:=\omega l^*$.
We claim that $\mathcal E(u_1, v_1)<\mathcal E(\bar u, \bar v)$.  To show this, we first note that $-l^*\tan z<(L-l^*)z$.  Indeed, $\tan z=\omega \tanh(l^*-L)$ thanks to (\ref{27}), then the fact $\tanh (L-l^*)<L-l^*$ readily implies that $-z\tanh(l^*-L)<(L-l^*)z$, which leads to this inequality.  Then we can find
\[
\mathcal E(u_1, v_1)-\mathcal E(\bar u, \bar v)=\left(\frac{l^*}{L}-\frac{z}{z-\tan z}\right)\frac{\omega^2 M^2}{\chi l^*}<0,
\]
which is our claim.

\subsection{Symmetric multi--bump solution}
According to \cite{BCR}, one can compare the free energies of half--bump $(u^1,v^1)$, two similar half--bumps $(u^2,v^2)$ and four similar half--bumps $(u^4,v^4)$ and show that $\mathcal E(u^1,v^1)<\mathcal E(u^2,v^2)<\mathcal E(u^4,v^4)<\mathcal E(\bar u,\bar v)$. In this section, we provide a complete hierarchy of the free energies of all similar multi--bumps.

According to (\ref{44}), for similar bump solutions $(u_k(x),v_k(x))=: (u_k^{\pm}(x),v_k^{\pm}(x))$ given in \eqref{similar-bump} the associated free energy is
\begin{equation*}
\mathcal E(u_k(x),v_k(x)) =\frac{1}{k(\tan \omega l_k^*-\omega l_k^*)}\frac{\omega^3 M^2}{\chi}.
\end{equation*}
Our next result shows that boundary spike has the least energy among all similar bump solutions.
\begin{lemma}\label{lemma41}
Assume that $\chi>\chi_k$, $k\geq1$, and let $(u_k(x),v_k(x))$ be the symmetric multi--bump solution.  Then we have the following inequalities
\begin{equation}\label{46}
\mathcal E(u_1,v_1)<\mathcal E(u_2,v_2)<...<\mathcal E(u_k,v_k)<\mathcal E(\bar u,\bar v).
\end{equation}
\end{lemma}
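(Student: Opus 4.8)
\textit{Proof plan.} The plan is to collapse the whole chain \eqref{46} to a single scalar monotonicity inequality, using the explicit energy formula
\[
\mathcal{E}(u_k,v_k)=\frac{1}{k(\tan\omega l_k^*-\omega l_k^*)}\cdot\frac{\omega^3M^2}{\chi},\qquad \omega=\sqrt{\chi-1},
\]
together with the defining equation $\frac1\omega\tan\omega l_k^*=\tanh\big(l_k^*-\frac Lk\big)$ from the previous section. Recall $\omega l_k^*\in(\frac\pi2,\pi)$, so each energy above is negative, and that $\chi>\chi_k$ forces $l_k^*<\frac\pi\omega<\frac Lk$; hence $r_k:=\frac Lk-l_k^*>0$. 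First I would rewrite the energy: substituting $\tan\omega l_k^*=\omega\tanh(l_k^*-\frac Lk)=-\omega\tanh r_k$ and $l_k^*=\frac Lk-r_k$ gives $\omega l_k^*-\tan\omega l_k^*=\omega\big(\frac Lk-(r_k-\tanh r_k)\big)$, so that
\[
\mathcal{E}(u_k,v_k)=-\,\frac{\omega^2M^2}{\chi}\cdot\frac{1}{L-q_k},\qquad q_k:=k\,(r_k-\tanh r_k).
\]
Since $0<r_k-\tanh r_k<r_k$ we get $0<q_k<k r_k=L-kl_k^*<L$, so $L-q_k\in(0,L)$; in particular $\mathcal{E}(u_k,v_k)<-\frac{\omega^2M^2}{\chi L}=\mathcal{E}(\bar u,\bar v)$, which is the last inequality in \eqref{46}. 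Because $x\mapsto-\frac{\omega^2M^2}{\chi(L-x)}$ is strictly decreasing on $(0,L)$, the remaining inequalities $\mathcal{E}(u_1,v_1)<\dots<\mathcal{E}(u_k,v_k)$ are \emph{equivalent} to $q_1>q_2>\dots>q_k$. Everything therefore reduces to: the map $j\mapsto q_j$ is strictly decreasing.

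To prove this I would pass to a continuous parameter. For $\ell\in(\frac\pi\omega,\infty)$ let $h(\ell)\in(\frac\pi{2\omega},\frac\pi\omega)$ be the unique root of $\frac1\omega\tan\omega h=\tanh(h-\ell)$ (existence/uniqueness as in Section~3), and set $R(\ell):=\ell-h(\ell)>0$; then $l_j^*=h(L/j)$, $r_j=R(L/j)$, and $q_j=L\cdot\frac{R(\ell)-\tanh R(\ell)}{\ell}\big|_{\ell=L/j}$. Since $j\mapsto L/j$ is decreasing, ``$q_j$ strictly decreasing in $j$'' is equivalent to ``$\ell\mapsto\frac{R(\ell)-\tanh R(\ell)}{\ell}$ strictly increasing on $(\frac\pi\omega,\infty)$''. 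Using the branch $\omega h=\pi-\arctan(\omega\tanh R)$, the map $R\mapsto\ell$ is the inverse of the explicit strictly increasing bijection $R\mapsto\ell(R):=R+\frac{\pi-\arctan(\omega\tanh R)}{\omega}$ of $(0,\infty)$ onto $(\frac\pi\omega,\infty)$, and $\ell(R)>R$. Hence it suffices to show $\tilde q(R):=\frac{R-\tanh R}{\ell(R)}$ is strictly increasing on $(0,\infty)$. A direct computation gives $\ell'(R)=\frac{(1+\omega^2)\tanh^2R}{1+\omega^2\tanh^2R}$, whence
\[
\tilde q'(R)=\frac{\tanh^2R}{\ell(R)^2\,(1+\omega^2\tanh^2R)}\Big[\,\ell(R)\big(1+\omega^2\tanh^2R\big)-(1+\omega^2)\big(R-\tanh R\big)\Big],
\]
and, using $\ell(R)>R$, positivity of the bracket follows from the identity $R(1+\omega^2\tanh^2R)-(1+\omega^2)(R-\tanh R)=(1+\omega^2)\tanh R-\frac{\omega^2R}{\cosh^2R}>0$, where the final inequality is $(1+\omega^2)\tanh R>\omega^2\tanh R>\frac{\omega^2R}{\cosh^2R}$, the last step being the elementary fact $\tanh R>\frac{R}{\cosh^2R}\Leftrightarrow\sinh R\cosh R>R\Leftrightarrow\sinh 2R>2R$ for $R>0$. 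This gives $\tilde q'>0$, hence $q_j\downarrow$, hence \eqref{46}.

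The only genuinely creative steps are the opening substitution — which simultaneously disposes of the comparison with the constant state and recasts the hierarchy as the monotonicity of the single quantity $q_k=k(r_k-\tanh r_k)$ — and the decision to reparametrize that monotonicity problem by $R$ rather than by $\chi$ or by $\ell$, so that the auxiliary function $\ell(R)$ is explicit. After these choices the remaining work (the derivative computation for $\ell'$ and $\tilde q'$, and the one-line inequality $\sinh 2R>2R$) is routine, and I do not expect any obstacle there. One should only double-check that all of $l_1^*,\dots,l_k^*$ are indeed defined when $\chi>\chi_k$, which is immediate since $\chi>\chi_k\ge\chi_j$ for $j\le k$.
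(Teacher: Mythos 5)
Your proof is correct. It shares the same backbone as the paper's argument --- both reduce the chain \eqref{46} to the strict monotonicity of a single scalar function of the continuous parameter $\ell=L/k$, and both differentiate implicitly through the transcendental equation $\frac1\omega\tan\omega l^*=\tanh(l^*-\ell)$ --- but the execution is genuinely different in two places. First, your opening rewriting $\mathcal E(u_k,v_k)=-\frac{\omega^2M^2}{\chi(L-q_k)}$ with $q_k=k(r_k-\tanh r_k)\in(0,L)$ disposes of the comparison with $\mathcal E(\bar u,\bar v)$ as a free byproduct, whereas the paper proves that last inequality separately (by the same argument it uses earlier for $\mathcal E(u_1,v_1)<\mathcal E(\bar u,\bar v)$) and then works with $F(\ell)=\ell/(\frac1\omega\tan\omega l^*-l^*)$. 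Second, the paper keeps $l^*$ as the independent variable, writes $F=\frac{l^*-\operatorname{arctanh}z}{z-l^*}$ with $z=\frac1\omega\tan\omega l^*$, and establishes $\partial F/\partial l^*>0$ through a nested sign analysis of two auxiliary functions $\psi$ and $\phi$; you instead invert the constraint explicitly via $\omega h=\pi-\arctan(\omega\tanh R)$, so that $\ell(R)$ is an elementary formula and the whole monotonicity claim collapses to $\sinh 2R>2R$. Your parametrization by $R=\ell-l^*$ buys a shorter and more transparent endgame; the paper's buys a computation that stays in the same variables used throughout Section 3. I verified the algebra (the formula for $\ell'(R)$, the bracket identity $R(1+\omega^2\tanh^2R)-(1+\omega^2)(R-\tanh R)=(1+\omega^2)\tanh R-\omega^2R/\cosh^2R$, and the reduction of $q_j\downarrow$ to $\tilde q'>0$) and found no gaps; the domain check that $L/j>\pi/\omega$ for all $j\le k$ when $\chi>\chi_k$ is correctly noted at the end.
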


\begin{figure}[ht]
        \centering
\includegraphics[width=\textwidth,height=2.5in]{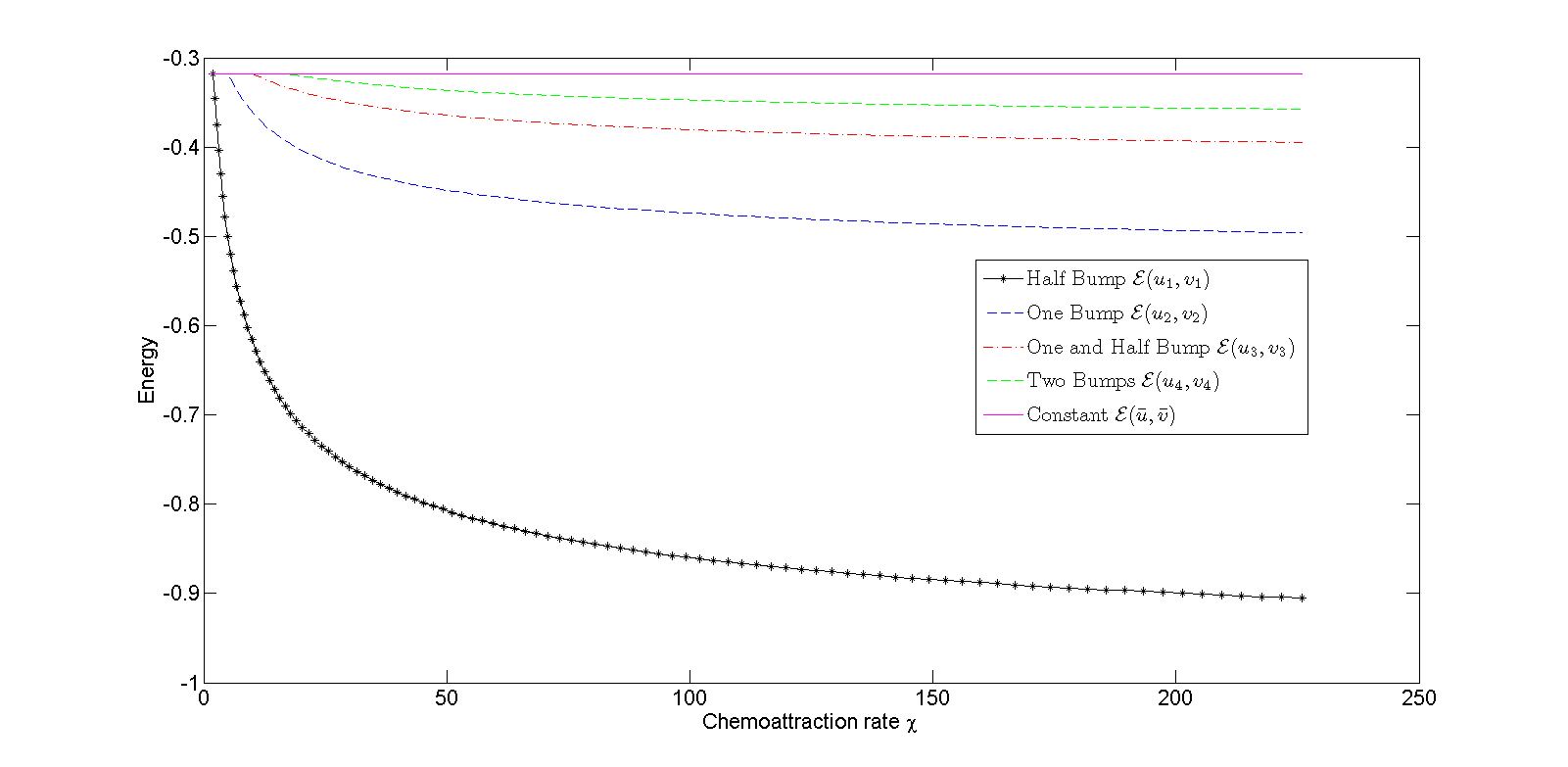}
  \caption{Hierarchy and qualitative behaviors of the steady state free energy $\mathcal E$ in (\ref{42}). Here a unit total cell population and $L=6$ are chosen.  This illustrates Lemma \ref{lemma41} showing that the single boundary spike has the least energy among all steady states with similar profiles, those with more half--bumps have larger energies, and the constant solution $(\bar u,\bar v)$ has the largest energy.  We also observe that for each $k\in\mathbb N^+$, $\mathcal E(u_k,v_k)$ approaches a constant as $\chi\rightarrow \infty$.  This fact is rigorously proved latter.}\label{figure6}
\end{figure}

\begin{proof}
The last inequality can be verified by the same arguments as for $\mathcal E(u_1,v_1)<\mathcal E(\bar u,\bar v)$.  Let us denote
\[F(L):=\frac{L}{\frac{1}{\omega}\tan \omega l^*-l^*}.\]
Then $\mathcal E(u_k,v_k)=\frac{\chi}{\omega^3 M^2} F(\frac{L}{k})$ and (\ref{46}) is equivalent as
$F(L)<F(L/2)<...<F(L/k),$
and it is sufficient to prove that $\frac{\partial F(L)}{\partial L}<0$.  Note that $l^*$ depends on $L$ in $F(L)$ but not on the fixed constant $\omega$.
Denote $z=\frac{1}{\omega} \tan \omega l^*$, $z\in[0,1)$ and rewrite $F$ as
$
F(L)=\frac{l^*-\text{arctanh} z}{z-l^*}.
$
Note that $l^*$ depends on $L$, then differentiating $F(L)$ with respect to $L$ gives us
\[
\frac{\partial F}{\partial L}=\frac{\partial F}{\partial l^*}\frac{\partial l^*}{\partial L},\quad
\mbox{where} \quad \frac{\partial l^*}{\partial L}=-\frac{\cos^2 \omega l^*}{\cosh^2(l^*-L)-\cos^2\omega l^*}<0\,,
\]
then we only need to show $\frac{\partial F}{\partial l^*}>0$ to conclude the lemma.
We calculate, by using the fact that $\frac{\partial z}{\partial l^*}=1+(\omega z)^2$, to find
\begin{align*}
\frac{\partial F}{\partial l^*}
=&\frac{\big(1-\frac{1}{1-z^2}\frac{\partial z}{\partial l^*}\big)(z-l^*)-(l^*-\text{arctanh} z)(\frac{\partial z}{\partial l^*}-1)  }{(z-l^*)^2} = \frac{(1+\omega^2)z^2}{(1-z^2)(z-l^*)^2}\psi (l^*), \quad l^*\in(\frac{\pi}{2\omega},\frac{\pi}{\omega}),
\end{align*}
where
\begin{equation*}
\psi(l^*):=l^*-z-\frac{\omega ^2}{1+\omega ^2} (l^*-\text{arctanh} z)(1-z^2).
\end{equation*}
Now we prove that $\psi(l^*)>0$.  Note that $\psi(\frac{\pi}{\omega})=\frac{\pi}{\omega}>0$, then it is sufficient to show that $\psi'(l^*)<0$.  Indeed, we have for $l^*\in(\frac{\pi}{2\omega},\frac{\pi}{\omega})$ that
\begin{align*}
\psi'(l^*)
=&1-\frac{\partial z}{\partial l^*}-\frac{\omega^2}{1+\omega^2}\Big(\big(1-\frac{1}{1-z^2}\frac{\partial z}{\partial l^*}\big)(1-z^2)-2z(l^*-\text{arctanh} z)\frac{\partial z}{\partial  l^*}  \Big) \nonumber \\
=&\frac{2\omega^2 z(1+(\omega z)^2)}{1+\omega^2} (l^*-\text{arctanh} z).
\end{align*}
We claim that $\phi(l^*):=l^*-\text{arctanh} z>0$ for $l^*\in(\frac{\pi}{2\omega},\frac{\pi}{\omega})$; indeed $\phi(\frac{\pi}{\omega})=\frac{\pi}{\omega}>0$ and \[\phi'(l^*)=1-\frac{1}{1-z^2}\frac{\partial z}{\partial l^*}=-\frac{(1+\omega^2)z^2}{1-z^2}<0,\]
therefore $\phi(l^*)>0$ as claimed and $\psi'(l^*)>0$.  This verifies that $\frac{\partial F}{\partial l^*}>0$ hence the lemma is proved.
\end{proof}

An immediate consequence is the following result.

\begin{lemma}
Assume that $\omega>\frac{k\pi}{L}$.  Let $(u_k(x),v_k(x))$ be the $k$th-symmetric multi--bump solution.  Then we have the following inequalities
\begin{equation*}
\Vert u_1(x)\Vert_{L^\infty}>\Vert u_2(x)\Vert_{L^\infty}>...>\Vert u_k(x)\Vert_{L^\infty}.
\end{equation*}
\end{lemma}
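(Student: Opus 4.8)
The plan is to reduce the chain of inequalities to a single scalar monotonicity statement, in the same spirit as the reduction of Lemma~\ref{lemma41} to the monotonicity of $F(L)$. Since both members of the pair $(u_k^\pm,v_k^\pm)$ in \eqref{similar-bump} are obtained by reflecting the single brick $\mathbb U_k$, we have $\|u_k\|_{L^\infty}=\mathbb U_k(0)=\mathcal A_k\big(1-\cos\omega l_k^*\big)$. Writing $z_k:=\omega l_k^*\in(\tfrac\pi2,\pi)$ and using $\bar uL=M$, the explicit formula for $\mathcal A_k$ gives after a direct substitution
\[
\|u_k\|_{L^\infty}=\frac{M\omega}{k}\,h(z_k),\qquad h(z):=\frac{1-\cos z}{\sin z-z\cos z},
\]
where, by the analysis of Section~3, $l_k^*\in(\tfrac{\pi}{2\omega},\tfrac{\pi}{\omega})$ is the unique root of $\tfrac1\omega\tan\omega l_k^*=\tanh\big(l_k^*-\tfrac Lk\big)$. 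The hypothesis $\omega>\tfrac{k\pi}{L}$ is exactly $\chi>\chi_k$, which guarantees that the similar--bump solutions $(u_j^\pm,v_j^\pm)$ exist for all $j=1,\dots,k$. Thus it suffices to prove that $j\mapsto \tfrac1j\,h(z_j)$ is strictly decreasing for $j=1,\dots,k$.

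First I would record two elementary properties of $h$ on $(\tfrac\pi2,\pi)$. It is positive there, since $1-\cos z>0$ and $\sin z-z\cos z>0$ because $\sin z>0$ and $\cos z<0$. It is strictly decreasing there: a short computation gives
\[
h'(z)=\frac{\sin z\,(\sin z-z)}{(\sin z-z\cos z)^2},
\]
whose numerator is negative because $\sin z\le 1<\tfrac\pi2<z$. Next I would show that $z_j$ is strictly increasing in $j$. The defining equation for $l_j^*$ is precisely the one treated in the proof of Lemma~\ref{lemma41} with the interval length $L$ replaced by $a=L/j$; the identical implicit differentiation yields
\[
\frac{\partial l^*}{\partial a}=-\frac{\cos^2\omega l^*}{\cosh^2(l^*-a)-\cos^2\omega l^*}<0,
\]
the denominator being positive since $\cosh^2\ge 1>\cos^2\omega l^*$ (as $\omega l^*\in(\tfrac\pi2,\pi)$). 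Hence $l^*$ decreases in $a$, i.e. increases in $j$, so $z_j=\omega l_j^*$ is strictly increasing.

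Combining the two: $\tfrac1j$ is positive and strictly decreasing, while $h(z_j)$ is positive (by the first property) and strictly decreasing (the decreasing $h$ composed with the increasing map $j\mapsto z_j$). Therefore the product $\tfrac{M\omega}{j}h(z_j)=\|u_j\|_{L^\infty}$ is strictly decreasing in $j$, which is the asserted inequality $\|u_1\|_{L^\infty}>\cdots>\|u_k\|_{L^\infty}$. Equivalently, one may introduce $G(a):=a\,h(z(a))$ with $z(a)=\omega l^*(a)$ and check $G'(a)=h(z)+a\,h'(z)\,\omega\,\tfrac{\partial l^*}{\partial a}>0$ on $[L/k,L]$ (both $h'(z)$ and $\tfrac{\partial l^*}{\partial a}$ are negative, so their product is positive), then evaluate $G$ at $a=L,L/2,\dots,L/k$.

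I do not anticipate a genuine obstacle; the only delicate point is the sign bookkeeping --- that $h$ is decreasing in $z$ and $l^*$ is decreasing in $a$, so these two effects reinforce one another in $h(z_j)$ rather than cancelling --- together with checking that the admissible range $\{a:\omega a>\pi\}$ contains all the interval lengths $L,L/2,\dots,L/k$ that occur, which holds precisely because $\omega>\tfrac{k\pi}{L}$.
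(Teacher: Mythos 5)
Your argument is correct, and it reaches the conclusion by a route that is genuinely different from (and somewhat more self-contained than) the paper's. Both proofs reduce the chain of inequalities to the monotonicity of a single scalar function of the sub-interval length $a=L/j$, and both rely on the implicit differentiation $\partial l^*/\partial a<0$; the difference lies in how the monotonicity is established. The paper writes $\Vert u_j\Vert_{L^\infty}=\bar u\,M(L/j)$ with $M(a)=g(a)F(a)$, where $F(a)=a/(\tfrac1\omega\tan\omega l^*-l^*)$ is exactly the function from the energy-hierarchy result (Lemma \ref{lemma41}) and $g(a)=\tfrac{1}{\cos\omega l^*}-1$, and then concludes $M'>0$ by the product rule from the four sign facts $g<0$, $g'<0$, $F<0$, $F'<0$ --- the last being the nontrivial output of Lemma \ref{lemma41} (the argument with the auxiliary function $\psi$). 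Your factorization $\Vert u_j\Vert_{L^\infty}=\tfrac{M\omega}{j}h(z_j)$ with $h(z)=\tfrac{1-\cos z}{\sin z-z\cos z}$ (the same representation the paper already uses in Section 3.2 for the large-$\chi$ asymptotics) replaces that nontrivial input by the one-line computation $h'(z)=\sin z(\sin z-z)/(\sin z-z\cos z)^2<0$ on $(\tfrac\pi2,\pi)$, so your proof is independent of Lemma \ref{lemma41} and makes transparent that the two effects --- the prefactor $1/j$ shrinking and $h(z_j)$ shrinking because $z_j$ increases toward $\pi$ as the sub-interval shortens --- reinforce rather than compete. What the paper's version buys is brevity once Lemma \ref{lemma41} is in hand; what yours buys is an elementary, standalone argument. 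All your individual steps --- positivity of $h$, the sign of $h'$, the sign of $\partial l^*/\partial a$, and the admissibility of every length $L/j$, $j\le k$, under the hypothesis $\omega>k\pi/L$ --- check out.
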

\begin{proof}
We already know that
\[\Vert u_k(x)\Vert_{L^\infty}=\frac{\frac{1}{\cos w l^*_k}-1}{k(\frac{1}{\omega}\tan \omega l_k^*-l_k^*)}.\]
Let $M(L)=\frac{(\frac{1}{\cos w l^*_k}-1)L}{\frac{1}{\omega}\tan \omega l_k^*-l_k^*}$, then it is sufficient to prove that $\frac{\partial M}{\partial L}>0$ for the lemma.  To this end, we write $g(L)=\frac{1}{\cos w l^*_k}-1$, then $M(L)=g(L)F(L)$ and
$M'(L)=g'(L)F(L)+g(L)F'(L)$;
note that $g'(L)=\frac{\omega \sin \omega l_k^*}{\cos^2 \omega l_k^*}\frac{\partial l_k^*}{\partial L}<0$, $F(L)<0$, $g(L)<0$ and $F'(L)<0$, then we have that $M'(L)>0$ completing the proof.
\end{proof}

\begin{remark}
Observe that for each $(u_k,v_k)$, $k\in\mathbb N^+$, we know that $l_k^*\rightarrow 0^+$ and $\frac{1}{\omega}\tan \omega l^*=\tanh(l^*-L/k)\rightarrow -\tanh(L/k)$, therefore we can easily find that
\[\mathcal E(u_k,v_k)\rightarrow -\frac{1}{k\tanh(L/k)}, \qquad \mbox{ as }\chi\rightarrow \infty.\]
The qualitative behavior is schematically illustrated in Figure \ref{figure6}.
\end{remark}

\subsection{Asymmetric steady states: the role of $L_0$}
In general, we have that the energy of the asymmetric solution $(u_k^\#,v_k^\#)$ is
$\mathcal E=\frac{1}{\chi}\sum_{i} \lambda _i m_i,$
where $m_i$ are the cell populations on the $i$-th component of the support of $u$.  Let us consider for simplicity the case when there are two asymmetric boundary spikes $(u_2^\#,v_2^\#)$  found in the previous section.

\begin{figure}[ht]
        \centering
\includegraphics[width=\textwidth,height=3.2in]{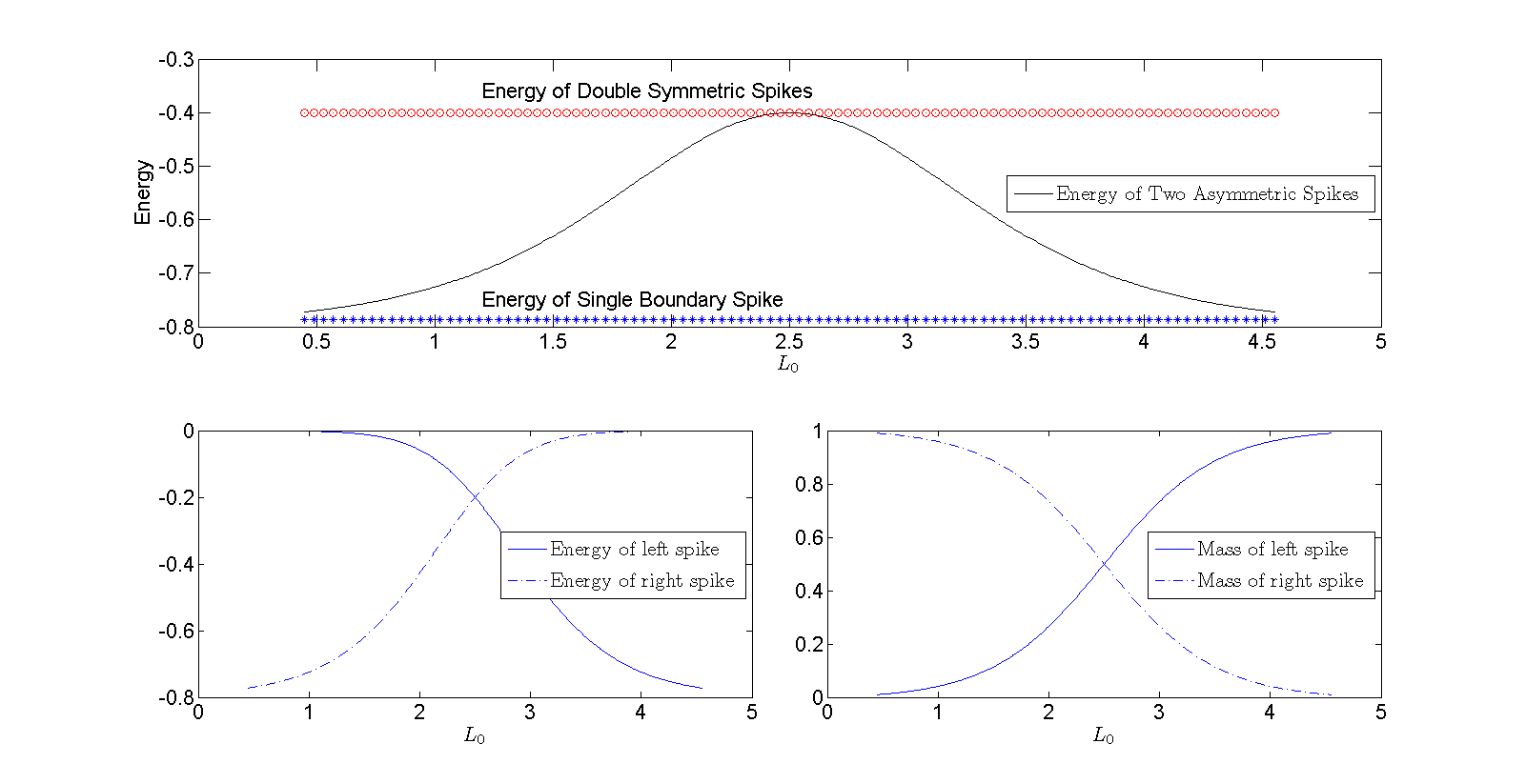}
  \caption{Qualitative behaviors of properties associated with the asymmetric double--boundary--spikes $(u^\#_2,v^\#_2)$ established in Section 3.3.  Here we choose $\chi=50$ and $L=6$.  According to our calculations, for each $L_0\in(0.4488,5.5512)$, one can find steady states $(u^\#_2,v^\#_2)$ such that $u^\#_2$ has two asymmetric boundary spikes.  The top figure illustrates the variation of $E^\#(u,v;L_0)$ as $L_0$ varies.  In particular, $E^\#(u,v;L_0)$ is symmetric about $x=\frac{L}{2}$, and achieves its maximum value there, which apparently equals that of the symmetric double--boundary spike; on the other hand, as $L_0\rightarrow 0.4488$ or $5.5512$, $E^\#(u,v;L_0)$ approaches the energy of the single boundary spike.  In the bottom, we plot the energy and mass of each spike as $L_0$ varies.  It is shown that the free energy of the left spike decreases and its mass increases in $L_0$, while the opposite holds for the right boundary spike.}\label{figure7}
\end{figure}

Therefore, for $(u_2^\#,v_2^\#)$ we have that
\[\mathcal E^\#(u_2^\#,v_2^\#;L_0)=\frac{\omega^2}{2}\Big(\frac{m_1^2}{\frac{1}{\omega}\tan \omega l^*-l^*}+\frac{m_2^2}{\frac{1}{\omega}\tan \omega l^{**}-l^{**}}\Big), L_0\in \Big(\frac{\pi}{\omega}, L-\frac{\pi}{\omega}\Big).\]
On other other hand, let us denote
\[\alpha_1=(l^*-\frac{1}{\omega}\tan \omega l^*)\cosh(L_0-l^*), \alpha_2=(l^{**}-\frac{1}{\omega}\tan \omega l^{**})\cosh (L-L_0-l^{**}),\]
then $m_1=\frac{\alpha_1}{\alpha_1+\alpha_2}$ and $m_2=\frac{\alpha_2}{\alpha_1+\alpha_2}$, and then we can also rewrite
\[\mathcal E^\#(u_2^\#,v_2^\#;L_0)=\frac{\alpha_1\cosh(L_0-l^*)+\alpha_2\cosh (L-L_0-l^{**})}{(\alpha_1+\alpha_2)^2}.\]

In Figure \ref{figure7}, we present the qualitative behaviors of $E^\#(u_2^\#,v_2^\#;L_0)$ as $L_0$ varies. The numerics suggest that  $\mathcal E^\#(u_2^\#,v_2^\#;L_0)$ achieves its maximum at $L_0=\frac{L}{2}$, that we did not pursue analytically.

\begin{remark}
Finally, let us comment that we can approximate the Cauchy problem in the whole space by considering the Neumann problem on the centered interval $[-L/2,L/2]$ and sending $L\to \infty$ with $\chi$ fixed, this can be obtained by rescaling from a case in which the length is fixed and $\chi \to \infty$ with the mass $M$ fixed. It is clear that boundary spikes do not survive in the limit as steady states of the Cauchy problem because then the mass scapes to $\infty$. Moreover, it is easy to check that the distance between bumps in the multi bump solutions with more than one bump without boundary spikes diverges as $L\to \infty$. The conclusion is that the only integrable steady states remaining in the limit are the single bump solutions as shown in \cite{CHVY}.
\end{remark}


\section{Structure Preserving Numerical Scheme and Simulations}

We will adapt the one dimensional first-order finite-volume method for general gradient flow equations developed in \cite{BCF12,CCH15} to equation \eqref{11}. For simplicity, we divide the
computational domain into finite-volume cells $C_j=[x_{j-\frac{1}{2}},x_{j+\frac{1}{2}}]$ of a uniform size $\Delta x$ with $x_j=j\Delta x$, $j\in \{0,\cdots, N\}$,
and denote by
\begin{equation*}
 {\bar u}_j(t)=\frac{1}{\Delta x}\int_{C_j}u(x,t)\,dx,
\end{equation*}
the computed cell averages of the solution $u$ at time $t\geq 0$. The semi-discrete first order finite-volume
scheme is obtained by integrating equation \eqref{11} over each cell $C_j$ and is given by the following system of ODEs for $ {\bar u}_j$:
\begin{equation}
\frac{d {\bar u}_j(t)}{dt}=-\frac{F_{j+\frac{1}{2}}(t)-F_{j-\frac{1}{2}}(t)}{\Delta x},
\label{a2}
\end{equation}
where the numerical flux $F_{j+\frac{1}{2}}$ approximate the continuous flux $-u(u-\chi v)_x$ at cell interface $x_{j+\frac{1}{2}}$. The dependence on $t\geq 0$ is omitted for simplicity. We use the upwind numerical fluxes by computing piecewise constant approximations to $u$ in each cell $C_j$, $\widetilde u_j(x)= {\bar u}_j$, $x\in C_j$, and compute the right (``east''), $u_j^{\rm E}$, and left (``west''), $u_j^{\rm W}$, point values at the cell interfaces $x_{j-\frac{1}{2}}$ and $x_{j+\frac{1}{2}}$, respectively as
\begin{equation}
\begin{aligned}
&u_j^{\rm E}=\widetilde u(x_{j+\frac{1}{2}}-0)=  \bar u_j, \qquad
&u_j^{\rm W}=\widetilde u(x_{j-\frac{1}{2}}+0)= \bar u_j.
\end{aligned}
\label{rew}
\end{equation}
Given the piecewise constant reconstruction $\widetilde u_j(x)$ and point values $u_j^{\rm E},\ u_j^{\rm W}$, the upwind fluxes in \eqref{a2} are computed as
\begin{equation}
F_{j+\frac{1}{2}}=\xi_{j+\frac{1}{2}}^+u_j^{\rm E}+\xi_{j+\frac{1}{2}}^-u_{j+1}^{\rm W},
\label{nf}
\end{equation}
where the discrete values $\xi_{j+\frac{1}{2}}$ of the velocities are obtained using the centered-difference approach,
\begin{equation*}
\xi_{j+\frac{1}{2}}=-\frac{( u_{j+1}-\chi v_{j+1})-( u_{j}-\chi v_{j})}{\Delta x},
\end{equation*}
and $\xi_{j+\frac{1}{2}}^\pm=\pm\max(\pm\xi_{j+\frac{1}{2}},0)$. Here, and in the rest we have simplified the notation avoiding the use of $\bar u_j$ by simply writing $u_j$, considering that the mean value is the approximation of the point value at $x_j$ anyhow given by $u_j=\widetilde u_j(x_j)=\bar u_j$.
Concerning the discretization for the equation of the chemoattractant $v$, we use direct second order finite differences to obtain the scheme
\begin{equation}
\frac{dv_j(t)}{dt}=\frac{v_{j+1}+ v_{j-1}-2 v_{j}}{\Delta x^2} -  v_{j}+ u_{j},
\label{a22}
\end{equation}
Equations \eqref{a2} and \eqref{a22} are supplemented with zero flux boundary conditions meaning
$v_{0}=v_{1}$ and $u_{0}=u_{1}$, and $v_{N}=v_{N-1}$ and $u_{N}=u_{N-1}$, implying
that $F_{\frac{1}{2}}=F_{N-\frac{1}{2}}=0$. Finally, the semi-discrete scheme \eqref{a2} is a system of ODEs, which has to be integrated numerically using a stable and accurate ODE solver.

\begin{remark} We have the following remarks.\\

{\bf Positivity Preserving.-} The scheme \eqref{a2}-\eqref{a22} preserves positivity of the computed cell averages $ {u}_j$ under a CFL condition. The proof is based on the forward Euler step of the ODE system \eqref{a2}, but as usual remains equally valid if the forward Euler method were replaced by higher-order ODE solver as soon as their time stepping is a convex combination of forward Euler steps.
More precisely, the computed cell averages $ {u}_j\ge0$, for all $j$, provided that
the following CFL condition
\begin{equation*}
\Delta t\le\frac{\Delta x}{2a},\quad\mbox{where}\quad a=\max\limits_j\left\{\xi_{j+\frac{1}{2}}^+,-\xi_{j+\frac{1}{2}}^-\right\}\,,
\end{equation*}
is satisfied.\\
\indent {\bf 2nd order Accuracy.-} The method can be turned into second order in a classical way by using slope limiters as in \cite{CCH15} to increase the approximation in \eqref{rew} to second order. In fact, the reconstructed $u$ is given by piecewise linear functions instead of piecewise constant functions and the fluxes are approximated similarly as in \eqref{nf}.
\end{remark}

\subsection{Semidiscrete free energy decay}

A discrete version of the entropy $\mathcal E$ defined in \eqref{41} is given by
\begin{equation}
\mathcal E_\Delta(t)=\Delta x\sum\limits_j\left[\frac1{2\chi} u_j^2 + \frac12 \left( \frac{v_{j+1}-v_j}{\Delta x}\right)^2 -2u_j v_j +\frac12 v_j^ 2\right].
\label{dfe}
\end{equation}
We also introduce the discrete version of the entropy dissipation
\begin{equation}
\mathcal I_\Delta(t)=\Delta x\sum\limits_j \left[ \xi_{j+\frac{1}{2}}^2\min\limits_j(u_j^{\rm E},u_{j+1}^{\rm W}) + \left( \frac{dv_j}{dt}\right)^ 2\right].
\label{ded}
\end{equation}
In the following theorem, we prove that the time derivative of $\mathcal E_\Delta(t)$ is less or equal than the negative of $\mathcal I_\Delta(t)$, mimicking the corresponding property of the continuous relation.

\begin{theorem}\label{dd1}
Consider the system \eqref{11} with no flux boundary conditions on $[0,L]$ with $L>0$ and with initial data $u_0(x)\ge0$. Given the semi-discrete finite-volume scheme \eqref{a2}-\eqref{a22} with $\Delta x=L/N$,
with a positivity preserving piecewise linear reconstruction for $u$ and discrete boundary conditions
$F_{\frac{1}{2}}=F_{N-\frac{1}{2}}=0$. Then, the discrete free energy \eqref{dfe} satisfies
\begin{equation*}
\frac{d}{dt}\mathcal E_\Delta(t)\le-\mathcal I_\Delta(t),\quad \mbox{for all } t>0.
\end{equation*}
\end{theorem}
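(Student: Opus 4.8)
The plan is to mimic, at the semi-discrete level, the continuous dissipation identity \eqref{42}: differentiate the discrete free energy $\mathcal{E}_\Delta(t)$ in \eqref{dfe} along the ODE system \eqref{a2}--\eqref{a22}, substitute the two evolution equations, and carry out discrete summation by parts twice so that what is left is a sum of manifestly non-negative quantities equal to $\mathcal{I}_\Delta(t)$. A recurring tool is that the discrete zero-flux conditions $v_0=v_1$, $v_N=v_{N-1}$ and $F_{\frac12}=F_{N-\frac12}=0$ make every endpoint term produced by summation by parts vanish.

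First I differentiate \eqref{dfe} with the product rule and group the terms carrying $\frac{dv_j}{dt}$ separately from those carrying $\frac{du_j}{dt}$. In the chemoattractant block the only nonlocal piece is the discrete Dirichlet energy $\frac12\sum_j\big(\frac{v_{j+1}-v_j}{\Delta x}\big)^2$; a summation by parts (using $v_0=v_1$, $v_N=v_{N-1}$) turns its time derivative into $-\Delta x\sum_j\frac{v_{j+1}-2v_j+v_{j-1}}{\Delta x^2}\frac{dv_j}{dt}$, and then substituting the discrete Laplacian from \eqref{a22}, i.e. $\frac{v_{j+1}-2v_j+v_{j-1}}{\Delta x^2}=\frac{dv_j}{dt}+v_j-u_j$, makes the whole $v$-block collapse, after elementary cancellations, to $-\Delta x\sum_j\big(\frac{dv_j}{dt}\big)^2$, which is precisely minus the chemoattractant part of \eqref{ded}.

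The cell-density block carries the real content. The $\frac{du_j}{dt}$-terms assemble into $\frac1\chi\,\Delta x\sum_j\big(u_j-\chi v_j\big)\frac{du_j}{dt}$, the discrete counterpart of $\frac2\chi\int u_t(u-\chi v)$. Setting $\phi_j:=u_j-\chi v_j$ and inserting the conservative update \eqref{a2} gives $-\frac1\chi\sum_j\big(F_{j+\frac12}-F_{j-\frac12}\big)\phi_j$; summing by parts and using $F_{\frac12}=F_{N-\frac12}=0$ rewrites this as $\frac1\chi\sum_j F_{j+\frac12}\big(\phi_{j+1}-\phi_j\big)=-\frac1\chi\,\Delta x\sum_j F_{j+\frac12}\,\xi_{j+\frac12}$, since by the definition of the discrete velocities $\phi_{j+1}-\phi_j=-\Delta x\,\xi_{j+\frac12}$. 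It then remains to bound $F_{j+\frac12}\,\xi_{j+\frac12}$ from below by $\xi_{j+\frac12}^2\min(u_j^{\rm E},u_{j+1}^{\rm W})\ge0$. Here the upwind structure \eqref{nf} and the positivity-preserving reconstruction \eqref{rew} are used together: from $\xi_{j+\frac12}^+\xi_{j+\frac12}^-=0$ and $\xi_{j+\frac12}=\xi_{j+\frac12}^++\xi_{j+\frac12}^-$ one gets $F_{j+\frac12}\,\xi_{j+\frac12}=\big(\xi_{j+\frac12}^+\big)^2u_j^{\rm E}+\big(\xi_{j+\frac12}^-\big)^2u_{j+1}^{\rm W}$, and because the limited piecewise-linear reconstruction keeps $u_j^{\rm E}\ge0$ and $u_{j+1}^{\rm W}\ge0$, this expression is non-negative and, whatever the sign of $\xi_{j+\frac12}$, is $\ge\xi_{j+\frac12}^2\min(u_j^{\rm E},u_{j+1}^{\rm W})$. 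Combining the two blocks gives $\frac{d}{dt}\mathcal{E}_\Delta(t)\le-\mathcal{I}_\Delta(t)$.

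The main obstacle is the algebra and bookkeeping in the cell-density block: performing the two summations by parts correctly, checking that the discrete no-flux conditions annihilate every boundary term, and --- most importantly --- recognizing that the exact quantity appearing in $\mathcal{I}_\Delta$, namely $\xi_{j+\frac12}^2\min(u_j^{\rm E},u_{j+1}^{\rm W})$, is what the positivity-preserving limiter is designed to produce; an arbitrary second-order reconstruction would not do, which is why that hypothesis is imposed. One clarifying remark belongs in the write-up: $\mathcal{E}_\Delta$ is written in terms of the cell averages $u_j=\bar u_j$, whereas \eqref{a2} uses the reconstructed interface values $u_j^{\rm E},u_{j+1}^{\rm W}$; there is no inconsistency, since the piecewise-linear reconstruction preserves cell averages, so differentiating $\mathcal{E}_\Delta$ only ever involves $\bar u_j$ and the interface values enter exclusively through $F_{j+\frac12}$.
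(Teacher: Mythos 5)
Your proof follows the same route as the paper's: differentiate $\mathcal E_\Delta$, substitute \eqref{a2}--\eqref{a22}, sum by parts using the discrete no-flux conditions so that the $v$-block collapses to $-\Delta x\sum_j\big(\tfrac{dv_j}{dt}\big)^2$ and the $u$-block becomes $-\tfrac1\chi\Delta x\sum_j\xi_{j+\frac12}F_{j+\frac12}$, and then bound $\xi_{j+\frac12}F_{j+\frac12}\ge\xi_{j+\frac12}^2\min(u_j^{\rm E},u_{j+1}^{\rm W})$ using $\xi_{j+\frac12}^+\xi_{j+\frac12}^-=0$ and the nonnegativity of the reconstructed interface values. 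This is essentially identical to the paper's argument (both, incidentally, read the cross term of \eqref{dfe} as $-u_jv_j$ rather than the printed $-2u_jv_j$, consistent with halving \eqref{41}), so nothing further is needed.
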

\begin{proof}
By using \eqref{a2}-\eqref{a22} and discrete integration by parts taking into account the noflux boundary conditions, we get
$$
\frac{d}{dt}\sum\limits_j \frac{\left( v_{j+1}-v_j \right)^2}{2\Delta x}  = - \sum\limits_j \frac{dv_j}{dt} \left(\frac{v_{j+1}-v_j}{\Delta x} - \frac{v_j-v_{j-1}}{\Delta x} \right)= - \Delta x \sum\limits_j \frac{dv_j}{dt} \left( \frac{dv_j}{dt}+v_j-u_j \right)  \,,
$$
$$
 \Delta x \frac{d}{dt} \sum\limits_j \frac{u_j^2}{2\chi} = -\frac1\chi \sum\limits_j F_{j+\frac{1}{2}} (u_j-u_{j+1})\,,
$$
and
$$
\Delta x \frac{d}{dt} \sum\limits_j u_j v_j= -\sum\limits_j F_{j+\frac{1}{2}} (v_j-v_{j+1})+\Delta x \sum\limits_j u_j \frac{dv_j}{dt}\,.
$$
Putting together these identities and collecting terms, we deduce
\begin{align*}
\frac{d}{dt}\mathcal E_\Delta(t)&= -\frac1\chi \sum\limits_j F_{j+\frac{1}{2}} (u_j+\chi v_j-u_{j+1}-\chi v_{j+1}) - \Delta x \sum\limits_j \left( \frac{dv_j}{dt} \right)^2 \\
&= -\frac1\chi \Delta x\sum\limits_j \xi_{j+\frac{1}{2}}F_{j+\frac{1}{2}} - \Delta x \sum\limits_j \left( \frac{dv_j}{dt} \right)^2 .
\end{align*}
Finally, using the definition of the upwind fluxes \eqref{nf} and formula \eqref{ded}, we conclude
\begin{align*}
\frac{d}{dt}\mathcal E_\Delta(t)&=-\Delta x\sum\limits_j \xi_{j+\frac{1}{2}}
\left[\xi_{j+\frac{1}{2}}^+ u_j^{\rm E}+\xi_{j+\frac{1}{2}}^- u_{j+1}^{\rm W}\right]- \Delta x \sum\limits_j \left( \frac{dv_j}{dt} \right)^2\\
&\le-\Delta x\sum\limits_j \xi_{j+\frac{1}{2}}^2\min\limits_j(u_j^{\rm E},u_{j+1}^{\rm W})- \Delta x \sum\limits_j \left( \frac{dv_j}{dt} \right)^2=-\mathcal I_\Delta(t).
\end{align*}\vspace{-0.25cm}
\end{proof}

Let us point out that the decrease of the free energy is crucial to keep at the discrete level the set of stationary states and their stability properties. Due to the decay of the free energy, our semidiscrete scheme is well-balanced since discrete stationary states remain steady and characterized by
$\xi_{j+\frac{1}{2}}=0$ whenever $u_j>0$ and $u_{j+1}>0$.

In the next two subsections, we take advantage of the numerical scheme in order to analyze numerically some interesting phenomena of this problem due to the rich bifurcation structure of the steady states. More precisely, we show the richness on the dynamics of the problem and the subtle choice of the asymptotic state depending on symmetries of the initial data for instance. On the other hand, we show that a metastability behavior appears naturally in the asymptotic behavior for $\chi$ large as the merging or separation of different bumps initially in the solution depends in a very subtle way on the initial value of the chemoattractant leading to a typical staircase behavior in the decay of the free energy in time.

\begin{figure}[ht!]
        \centering
\includegraphics[width=\textwidth,height=3in]{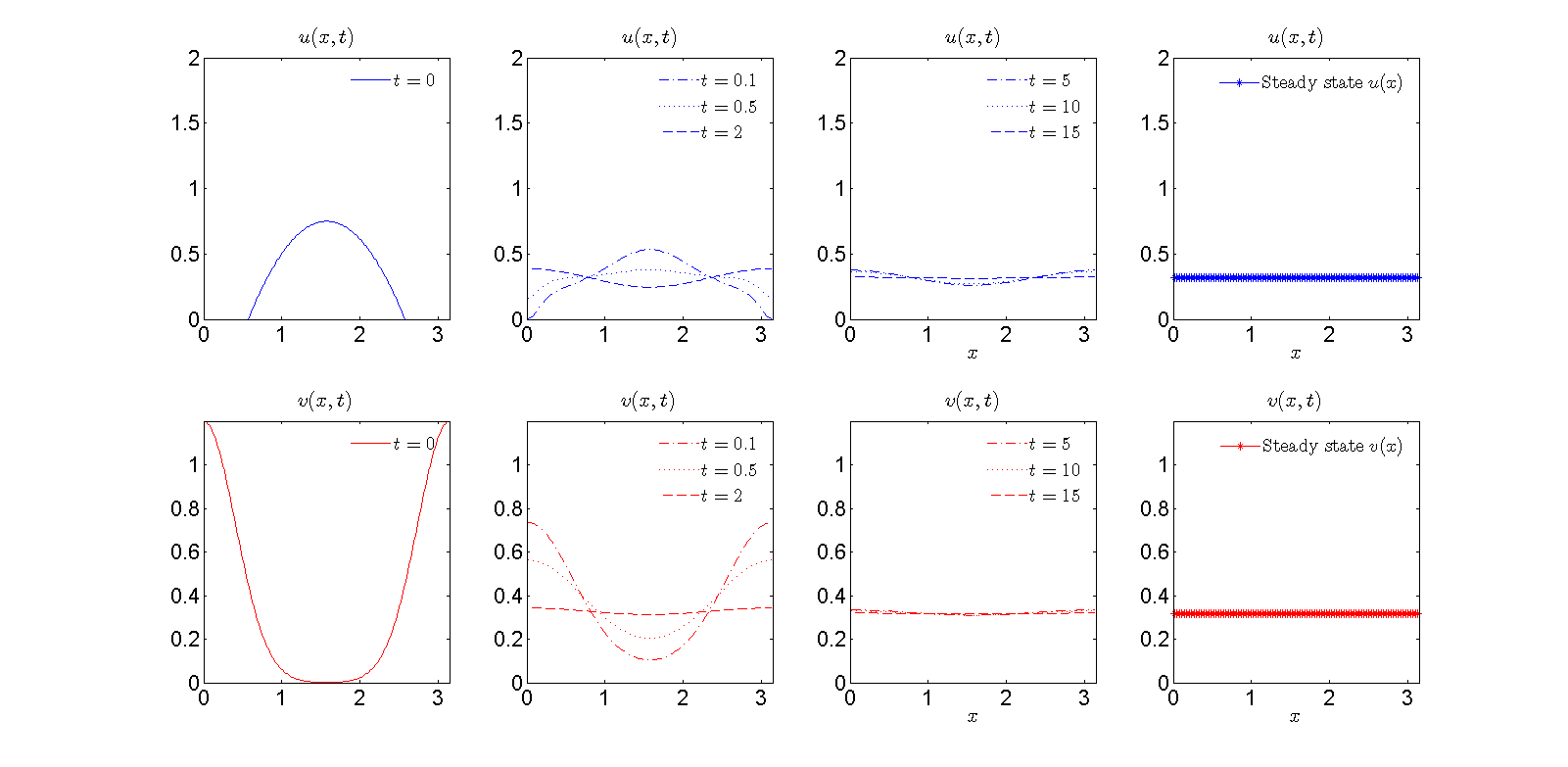}
  \caption{Convergence of symmetric initial data to the constant solution $(\bar u,\bar v)=(\frac{1}{\pi},\frac{1}{\pi})$ for $\chi=4$ and $L=\pi$ with $\chi\in(\chi_1,\chi_2)=(2,5)$. We choose symmetric initial data: $u_0(x)=\max\{0,\frac{3}{4}\big(1-(x-\pi/2)^2\big)\}$ and $v_0(x)=1.2e^{-3x^2}+1.2e^{-3(x-\pi)^2}$.}\label{figure9}
\end{figure}

\subsection{A/Symmetries of the initial data choose the asymptotic behavior}

One can see that as $\chi>0$ increases, the structure of steady states becomes increasingly complex.  Hence we suspect the dynamical behavior of solutions to the system (\ref{11}) shall also become increasingly intricate.  In this section, we use numerical examples to illustrate that the dynamical system (\ref{11}) will exhibit rich behaviors which critically depend on the value of chemotactic sensitivity parameter $\chi$ and/or initial data. When $0<\chi\leq \chi_1$, the dynamics of solutions to (\ref{11}) have been well understood in section 2 and numerical simulations are shown therein.

Next we increase the value of $\chi$ such that $\chi_1<\chi<\chi_2$ to understand how the asymptotic behavior of solutions is chosen dynamically. We fix the initial data for the cell density as $u_0(x)=\max\{0,\frac{3}{4}\big(1-(x-\pi/2)^2\big)\}$ being symmetric on the interval with $L=\pi$. We also choose $\chi=4$ that lies in the interval $(\chi_1,\chi_2)=(2,5)$. We know that for $\chi \in(\chi_1,\chi_2)$ steady states of (\ref{11}) must be either constant or monotone (half-bump), while the constant solution $(\bar u,\bar v)=(\frac{1}{\pi},\frac{1}{\pi})$ is unstable in this case (see details in section 2). On the other hand, for initial data $(u_0,v_0)$ symmetric about $x=\frac{L}{2}$, $(u(x,t),v(x,t))$ stay symmetric for all $t>0$, and the simulation in Figure \ref{figure9} shows that for a symmetric initial data of the chemoattractant taking $v_0(x)=1.2e^{-3x^2}+1.2e^{-3(x-\pi)^2}$, the constant stationary state is the only possible asymptotic limit, and the dynamics illustrate that all symmetric initial data will converge to the constant steady state for $\chi \in(\chi_1,\chi_2)$.  In general, it is natural to expect that the constant solution $(\bar u,\bar v)$ is the global attractor of such symmetric initial data $\chi \in(\chi_1,\chi_2)$, however it is an open problem that deserves future exploration to show that the stable manifold of the constant steady state in this range of values of $\chi$ is given by the symmetric initial data.

\begin{figure}[ht]
        \centering
\includegraphics[width=\textwidth,height=3in]{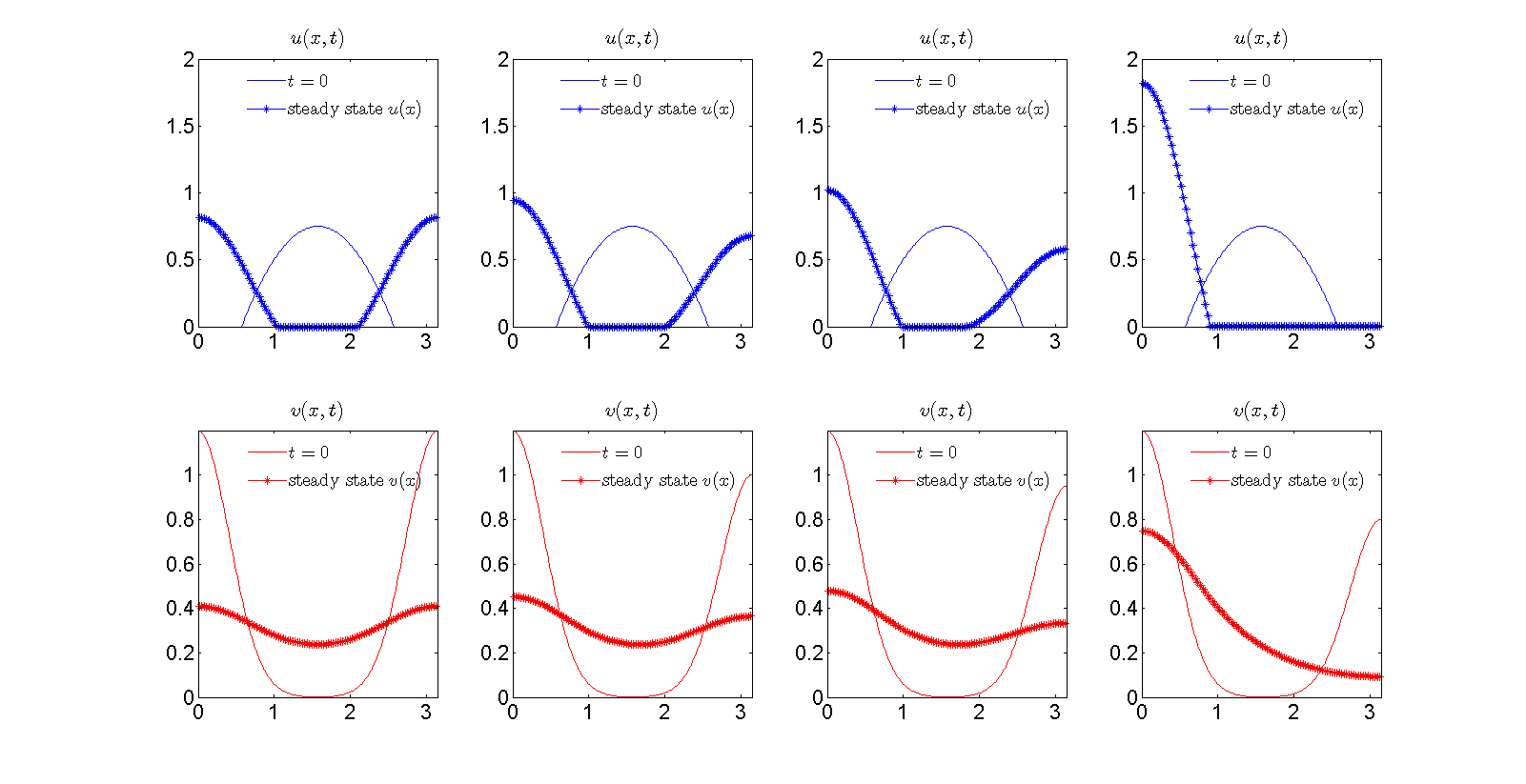}
  \caption{All parameters are the same as in Figure \ref{figure9} except that $v_0(x)=1.2e^{-3x^2}+ae^{-3(x-\pi)^2}$, with $a=1.2,1.0,0.95$ and $0.8$ from the left to right.  We observe that the a/symmetry of initial data prevails, and the slightly tilted $(u_0,v_0)$ converge to $(u_2^\#,v_2^\#)$ with $L_0=\frac{\pi}{L}$.  However, if We continue asymmetrizing the initial data as in the last column, which is more tilted to the left endpoint than $v_0$, then cells, attracted to the left endpoint, eventually form a single boundary spike on the left.}\label{doublespike}
\end{figure}
On the other hand, as expected from the previous discussion, if we increase the value of the chemosensitivity to $\chi=6$ such that $\chi>\chi_2$, the same initial data stay symmetric and now converge not to the constant steady state $(\bar u,\bar v)$, but to the non-monotone stationary double--boundary spike asymptotically as seen in the first column of Figure \ref{doublespike}.  This state is the one with the lowest energy among the symmetric stationary states and is therefore expected to be chosen for a large class of symmetric initial data. We did not pursue further clarification of the stability of the constant steady state, however, our numerical studies indicate that once $\chi>\chi_2$, the constant steady state $(\bar u,\bar v)$ also becomes unstable for symmetric perturbations while symmetrical multi-bump solutions might be locally stable.

In the next three columns of Figure \ref{doublespike}, we proceed to asymmetrize the initial data for the chemoattractant, by choosing  initial data $v_0(x)=1.2e^{-3x^2}+1.0e^{-3(x-\pi)^2}$, which is slightly tilted to the left end point, and examine how the asymmetry of initial data will affect the selection of asymptotic steady states.

We can summarize this subsection by pointing out that the gradient flow obviously chooses to slide down the steepest descent of the free energy landscape. However, due to the rich number of steady states, it is quite difficult to give precise conditions on the initial data choosing a particular asymptotic state in the whole generality. For instance, giving a precise characterization of the basin of attraction of the compactly supported single bump (two half-bumps) solution for any value of $L$ and $\chi$ is an interesting open problem, in particular, in view of the connection to the Cauchy problem and asymptotic stability of the single bump solutions as discussed in \cite{CHVY}.

\subsection{Metastability and transient behavior}

In the last set of experiments, we want to showcase that slowly variant transient behavior will be present due to the large set of stationary states and the possibly large number of connected components in their compact supports. There are transient states keeping a very similar shape for a very long time giving the impression of false stationarity. This has already been reported in similar aggregation-diffusion problems \cite{BFH} and also discussed in the recent survey \cite{CCY}. We refer to this kind of slow dynamics in the free energy landscape as metastability.

The first simulation in Figure \ref{meta1}-Case (i) shows the asymptotic formation of a single boundary spike given by (\ref{11}) with $\chi=20$ and $L=5$.  The initial data consist of unit total population centered at $x=3$ attracted by chemoattractant concentrated at $x=2$: $u_0(x)=\max\{0,\frac{3}{4}\big(1-(x-3)^2\big)\}$ and $v_0(x)=0.5e^{-2(x-2)^2}.$  We observe that cells are attracted by the chemical and migrate to the left right away and then form a single aggregation centered at $x\approx 2.2$ at $t=1$. This interior spike then endures a meta-stable process for $t\in(1,20.5)$ shifting to the left end very slowly and eventually touching the left endpoint and forming a stationary boundary spike there.  According to our calculations, the steady state is $u(x)=\mathcal A(\cos \omega x-\cos \omega l^*)$ over its support $(0,l^*)$ with $\mathcal A\approx 3.1668$ and $l^*\approx 0.4121$. The numerical simulations fully agree with this formula. The second row plots the decaying of free energy $\mathcal E(u,v)$ given by (\ref{41}). It captures the meta-stable evolution of the interior spike and formation of the single boundary spike and shows that the free energy $\mathcal E(u,v)$ converges to that of the boundary spike, which is the least among all stationary solutions. We also emphasize the staircase behavior of the free energy which is the typical metastability behavior in which the shape of the solution changes dramatically only at points of a high gradient of the decay of the free energy or high values of the free energy dissipation.

\begin{figure}[ht]
        \centering
                \includegraphics[width=0.95\textwidth,height=3.5in]{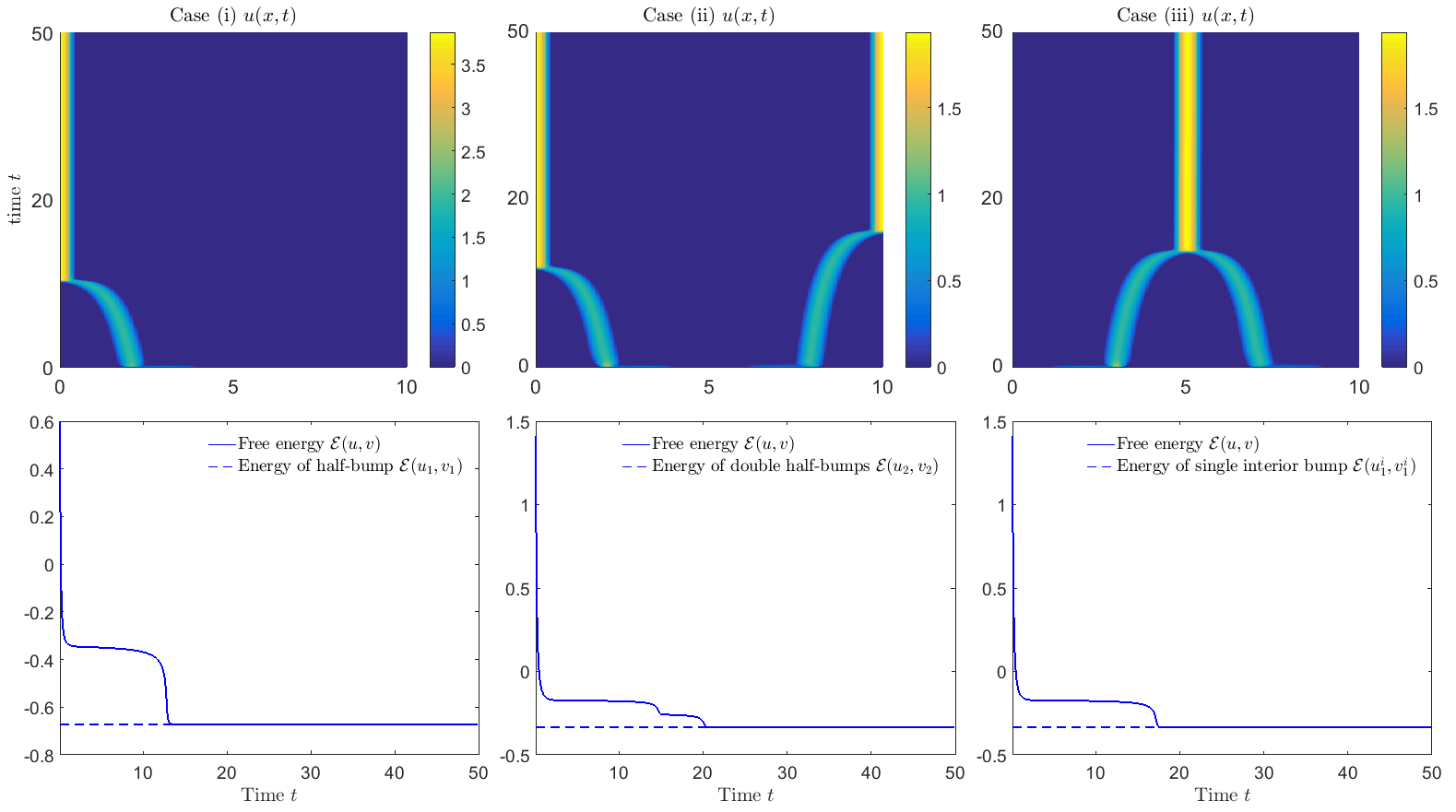}
 \caption{Metastability behavior of \eqref{11} for $\chi=20$ and $L=10$. Case (i): formation of half-bump (single boundary spike). Case (ii): formation of asymmetric two half-bumps. Case (iii):  formation of a single interior bump solution.}
\label{meta1}
\end{figure}

The metastability phenomena are ubiquitous in these models and we show two more cases in Figure \ref{meta1}-Case (ii) and \ref{meta1}-Case (iii). In Figure \ref{meta1}-Case (ii) we show the evolution corresponding to two initial bumps asymmetrically attracted to each other. The initial data are given by $u_0(x)=\max\{0,\frac{3}{8}\big(1-(x-2)^2\big)\}+\max\{0,\frac{3}{8}\big(1-(x-8)^2\big)\}$ and $v_0(x)=1.2e^{-2(x-3)^2}+0.6e^{-2(x-7)^2}$. We observe that the two bumps start attracting and move slowly towards each other. They finally merge into one single stationary single compactly supported bump (two half-bumps) in the middle.

In Figure \ref{meta1}-Case (iii), we take a very similar initial data for the cell density slightly closer initial bumps, however, we take the initial concentration of chemoattractant towards the endpoints of the interval. The precise initial data are given by $u_0(x)=\max\{0,\frac{3}{8}\big(1-(x-3)^2\big)\}+\max\{0,\frac{3}{8}\big(1-(x-7)^2\big)\}$ and $v_0(x)=1.2e^{-2(x-2)^2}+0.6e^{-2(x-8)^2}$. This produces the bumps to separate slowly and get closer and closer to the endpoints of the intervals. The one to the left arrives to zero earlier due to the asymmetry of the initial concentration of chemoattractant which is higher to the left. The one to the right finally also achieves the endpoints leading to convergence to the double symmetric boundary spike solution. The concentration of cell density is symmetric finally due to the symmetry of the cell mass distribution between the two initial bumps. The chemoattractant also achieves a stationary symmetric distribution.

In both cases, the staircase effect in the decay of the free energy is clear. The free energy dissipation is very small except for instances of time in which the shape of the solution changes abruptly from two bumps onto a single one in Case (ii) of Figure \ref{meta1} or the times in which each of the two bumps arrive at the corresponding endpoints of the interval in the Case (iii) of Figure \ref{meta1}.

\section*{Acknowledgements}
JAC was partially supported by EPSRC grant number EP/P031587/1. JAC acknowledges support through the Changjiang Visiting Professorship Scheme of the Chinese Ministry of Education. QW is supported by NSF-China (No. 11501460) and the Fundamental Research Funds for the Central Universities (No. JBK1805001). Z.-A. Wang is supported by the Hong Kong RGC GRF grant PolyU 153031/17P.

\setlength{\bibsep}{0.5ex}
\bibliographystyle{abbrv}
\bibliography{references}

\end{document}